\newtheorem{theorem}{Theorem}[section]
\newtheorem{corollary}[theorem]{Corollary}
\newtheorem{lemma}[theorem]{Lemma}
\newtheorem{proposition}[theorem]{Proposition}
\theoremstyle{definition}
\newtheorem{definition}[theorem]{Definition}
\newtheorem{example}[theorem]{Example}
\theoremstyle{remark}
\newtheorem{remark}[theorem]{Remark}
\newcommand{\thmcitemore}[2]{{\upshape \cite[#2]{#1}}}
\numberwithin{equation}{section}
\newcommand{\eref}[1]{(\ref{#1})}
\newcommand{\scr}{\mathcal}
 \DeclareMathOperator{\End}{End}
 \DeclareMathOperator{\sign}{sign}
 \DeclareMathOperator{\Ad}{Ad}
 \DeclareMathOperator{\ad}{ad}
 \DeclareMathOperator{\Ind}{Ind}
 \DeclareMathOperator{\Res}{Res}
 \DeclareMathOperator{\Ph}{Ph}
\DeclareMathOperator{\Spec}{Spec}
\newcommand{\st}{\; | \;}
\newcommand{\defeq}{:=}
\newcommand{\CC}{\mathbb{C}}
\newcommand{\NN}{\mathbb{N}}
\newcommand{\RR}{\mathbb{R}}
\newcommand{\ZZ}{\mathbb{Z}}
\newcommand{\half}{\frac{1}{2}}
\newcommand{\ip}[1]{\langle #1 \rangle}
\newcommand{\vspan}[1]{\mathrm{span}\{#1\}}
\newcommand{\slot}{\:\cdot\:}
\newcommand{\dual}{\dagger}
\newcommand{\pddiff}[3][.]{\ifthenelse{\equal{#1}{.}}
        {\frac{\partial^2 #2}{\partial #3^2}}
        {\frac{\partial^2 #1}{\partial #2 \partial #3}}}
\newcommand{\Lie}{\mathsf}
\newcommand{\lie}{\mathfrak}
\newcommand{\Univ}[2][.]{\ifthenelse{\equal{#1}{.}}{\mathcal{U}}{\mathcal{U}^{(#1)}}(\mathfrak{#2})}
 \DeclareMathOperator{\SL}{SL}
 \DeclareMathOperator{\SO}{SO}
 \DeclareMathOperator{\SU}{SU}
 \DeclareMathOperator{\Sp}{Sp}
\newcommand{\glx}{\mathfrak{gl}}
\newcommand{\slx}{\mathfrak{sl}}
\newcommand{\Weights}{\Lambda_W}
\newcommand{\Roots}{\Delta}
\newcommand{\coset}[1]{\underline{#1}}
\newcommand{\smatrix}[1]{\begin{pmatrix} #1 \end{pmatrix}}
\newcommand{\GTs}[6]{\left( \begin{array}{cccccc}
  \multicolumn{2}{c}{#1} &
  \multicolumn{2}{c}{#2} &
  \multicolumn{2}{c}{#3} \\
& \multicolumn{2}{c}{#4} &
  \multicolumn{2}{c}{#5} \\
&&\multicolumn{2}{c}{#6}
\end{array} \right)}
\newcommand{\pattern}{\Lambda}
\newcommand{\yvec}{\mathrm{y}}
\newcommand{\Phase}{\Ph}
\newcommand{\scrL}{\scr{L}}
\newcommand{\scrH}{\scr{H}}
\newcommand{\multop}[1]{M_{#1}}
\newcommand{\length}{l}
\newcommand{\Weylgroup}{\Lie{W}}
\newcommand{\edge}[3][]{{#2\xleftrightarrow{#1}#3}}
\newcommand{\shiftedaction}{\star}
\newcommand{\reflection}[1]{w_{#1}}
\newcommand{\scrX}[1][]{\scr{X}_{#1}}
\newcommand{\CX}{C(\scrX)} 
\newcommand{\LXE}[1][]{L^2(\scrX;E_{#1})}
\newcommand{\CXE}[1][]{C(\scrX;E_{#1})}
\newcommand{\CinftyXE}[1][]{C^\infty(\scrX;E_{#1})}
\newcommand{\LXEprime}[1][]{L^2(\scrX;E')}
\newcommand{\Lhol}[1]{L^\mathrm{hol}_{#1}}
\newcommand{\CXL}[1][]{C(\scrX;L_{#1})}
\newcommand{\LXL}[1][]{L^2(\scrX;L_{#1})}
\newcommand{\fibration}[1][]{\varphi_{#1}}
\newcommand{\foliation}[1][]{{\scr{F}_{#1}}}
\newcommand{\CP}{\CC \mathrm{P}}
\newcommand{\sect}{s}
\newcommand{\sectionpoi}{\varphi}
\newcommand{\cosphere}{S^*}
\newcommand{\Tangent}{T}
\newcommand{\Derivative}{{D}}
\newcommand{\Dolbeault}{\overline{\partial}}
\newcommand{\irrep}[1]{\hat{\Lie{#1}}}
\newcommand{\Khat}{\irrep{K}}
\newcommand{\triv}{{\sigma_0}}
\newcommand{\scrK}[1][]{\mathcal{K}_{#1}}
\newcommand{\scrJ}[1][]{\mathcal{J}_{#1}}
\newcommand{\scrA}[1][]{\mathcal{A}_{#1}}
\newcommand{\PsiDO[1]}[]{\Psi_{\foliation{#1}}}
\newcommand{\Symbol}[1][.]{\mathrm{Symb}_{ \ifthenelse{ \equal{#1}{.} }{}{{\foliation[#1]}} }}
\newcommand{\chiM}{\chi_{\Lie{M}}}
\newcommand{\chiA}{\chi_{\Lie{A}}}
\newcommand{\irrepset}{F}
\newcommand{\matrixunit}[2]{c_{#1,#2}}
\newcommand{\intertwiner}[3]{I_{#1,{#2}\to{#3}}}
\newcommand{\BGG}[3]{T_{#1,{#2}\to{#3}}}
\newcommand{\BGGextended}[3]{\tilde{T}_{#1,{#2}\to{#3}}}
\newcommand{\poIBGG}[3]{F_{#1,{#2}\to{#3}}}
\newcommand{\poI}[2]{N_{{#1}\to{#2}}}
\newcommand{\component}[1]{Q_{#1}}
\newcommand{\Fredholm}[1]{F_{#1}}
\newcommand{\Kcycle}[1]{\theta_{#1}}
\newcommand{\KK}{KK}
\newcommand{\K}{K}
\newcommand{\maptopt}{\iota}
\newcommand{\ulmatrix}[4]{
     \renewcommand{\baselinestretch}{1}\small\normalsize
     \left( \begin{array}{ccc}
     &&#2\\ \multicolumn{2}{c}{\raisebox{1.5ex}[0pt]{$#1$}} & #3 \\ #4
     \end{array} \right)
     \renewcommand{\baselinestretch}{2}\small\normalsize
 }
\newcommand{\drmatrix}[4]{
    \renewcommand{\baselinestretch}{1}\small\normalsize
    \left( \begin{array}{ccc}
    #1\\
    #2\\
    #2 & \multicolumn{2}{c}{\raisebox{1.5ex}[0pt]{$#4$}}
    \end{array} \right)
    \renewcommand{\baselinestretch}{2}\small\normalsize\vspace{2ex}
 }
\begin{document}

\title{The Bernstein-Gelfand-Gelfand complex and Kasparov theory for $\SL(3,\CC)$}
\author{Robert Yuncken}

\maketitle
\begin{abstract}
For $\Lie{G}=\mathrm{SL}(3,\mathbb{C})$, we construct an element of $\Lie{G}$-equivariant analytic $K$-homology from the Bernstein-Gelfand-Gelfand complex for $\Lie{G}$.  This furnishes an explicit splitting of the restriction map from the Kasparov representation ring $R(\Lie{G})$ to the representation ring $R(\Lie{K})$ of its maximal compact subgroup $\SU(3)$, and the splitting factors through the equivariant $K$-homology of the flag variety $\scrX$ of $\Lie{G}$.  In particular, we obtain a new model for the $\gamma$-element of $\Lie{G}$.
% which is analogous to those appearing in proofs of the Baum-Connes conjecture for subgroups of rank-one simple Lie groups.

The construction is made using $\SU(3)$-harmonic analysis associated to the canonical fibrations of $\scrX$.  On this matter, we prove results which demonstrate the compatibility of both the $\Lie{G}$-action and the order zero longitudinal pseudodifferential operators with the $\SU(3)$-harmonic analysis. 

\end{abstract}

%\tableofcontents

\section{Introduction}

A typical source for the construction of a Kasparov $K$-homology
cycle is an elliptic differential complex.  If the elliptic complex
is equivariant with respect to the action of a group $\Lie{G}$, and
if moreover the group action satisfies an additional conformality
property (see below), then one can obtain an element of equivariant $K$-homology. But if
$\Lie{G}$ is a semisimple Lie group of rank greater than one, non-trivial examples of such complexes cannot
exist (\cite{Puschnigg}). This paper describes a means of constructing an equivariant
$K$-homology class from the
Bernstein-Gelfand-Gelfand complex for
$\SL(3,\CC)$---a differential complex which is neither elliptic nor
conformal, but which satisfies some weaker (`directional') form of
these conditions.

The motivation for this construction comes from the Baum-Connes
conjecture. Although an understanding of the conjecture is not
essential to this paper, it is useful for perspective.  The conjecture asserts that for a second countable locally compact group $\Lie{G}$, the 
 {\em assembly map}
$$
  \mu_{\Gamma} : K^\Gamma(\underline{E}\Gamma) \to K(C^*_r\Gamma).
$$
is an isomorphism, thus giving a `topological computation' of the $K$-theory of the reduced group $C^*$-algebra.  For a fuller description of the conjecture and its many consequences, we refer the reader to the expository article \cite{HigsonICM} and the foundational paper \cite{BCH}.

The conjecture has been proven for a wide class of groups, amongst which we mention in particular the discrete subgroups of simple Lie groups of real rank one. A notable unknown, however, is the group $\SL(3,\ZZ)$.  More broadly, the conjecture is unknown for general discrete subgroups of semisimple Lie groups of rank greater than one.

For subgroups of rank one semisimple groups $\Lie{G}$, the proofs in each case centre on a canonical idempotent $\gamma$ in the representation ring $R(\Lie{G})\defeq\KK^\Lie{G}(\CC,\CC)$. (See \cite{Kas-Lorentz} for $\Lie{G}=\SO_0(n,1)$, \cite{JK} for $\Lie{G}=\SU(n,1)$, \cite{Julg}\footnote{The first proof of Baum-Connes for discrete subgroups of $\Sp(n,1)$ was due to V.~Lafforgue, but used a somewhat different approach.} for $\Sp(n,1)$).  For our purposes, the most convenient way to describe this idempotent $\gamma$ is via the following fact.

\begin{theorem}[Kasparov]
\label{thm:split_surjection}
Let $\Lie{G}$ be a semisimple Lie group and $\Lie{K}$ a maximal compact subgroup.  The restriction map $R(\Lie{G}) \to R(\Lie{K})$ is a split surjection of rings.
\end{theorem}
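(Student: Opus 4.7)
The plan is to implement Kasparov's Dirac/dual-Dirac construction on the Riemannian symmetric space $\Lie{G}/\Lie{K}$.

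First, I would construct a $\Lie{G}$-equivariant Dirac operator on $\Lie{G}/\Lie{K}$. The Cartan decomposition $\lie{g} = \lie{k} \oplus \lie{p}$ identifies the tangent space at the basepoint with the $\Lie{K}$-module $\lie{p}$, and the adjoint representation of $\Lie{K}$ on $\lie{p}$ admits a $\mathrm{Spin}^c$ lift (in the complex case, as for $\SL(3,\CC)$, a genuine spin lift is available via Clifford multiplication by the imaginary unit). This produces a $\Lie{G}$-invariant spinor bundle over $\Lie{G}/\Lie{K}$ together with a Dirac operator $D$ whose class $[D] \in \KK^{\Lie{G}}(C_0(\Lie{G}/\Lie{K}), \CC)$ is well defined.

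Next, I would exhibit the candidate splitting $\sigma : R(\Lie{K}) \to R(\Lie{G})$. For each $\Lie{K}$-representation $V$, the induced homogeneous vector bundle $\Lie{G} \times_{\Lie{K}} V \to \Lie{G}/\Lie{K}$ defines a $\Lie{G}$-equivariant class $\iota(V) \in \KK^{\Lie{G}}(\CC, C_0(\Lie{G}/\Lie{K}))$, the assignment $V \mapsto \iota(V)$ being a Green--Julg-type isomorphism $R(\Lie{K}) \cong \KK^{\Lie{G}}(\CC, C_0(\Lie{G}/\Lie{K}))$ for the proper action on $\Lie{G}/\Lie{K}$. I would then set
\[
\sigma([V]) \defeq \iota(V) \otimes_{C_0(\Lie{G}/\Lie{K})} [D] \in R(\Lie{G}),
\]
which is represented concretely by the Dirac operator twisted by $\Lie{G} \times_{\Lie{K}} V$.

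To verify that $\sigma$ is a section, I would pull the construction back along the forgetful map to $\Lie{K}$. The exponential map is a $\Lie{K}$-equivariant diffeomorphism $\lie{p} \cong \Lie{G}/\Lie{K}$, so $[D]|_{\Lie{K}}$ becomes the Euclidean Dirac class on the $\Lie{K}$-module $\lie{p}$, and Kasparov's $\Lie{K}$-equivariant Bott periodicity pairs it with $\iota(V)|_{\Lie{K}}$ to recover $[V] \in R(\Lie{K})$. Ring-multiplicativity of $\sigma$ then follows from the projection formula $\sigma([V]) \otimes x = \sigma([V] \cdot \mathrm{res}(x))$ for $x \in R(\Lie{G})$, applied with $x = \sigma([W])$ and using the section property to collapse $\mathrm{res}(\sigma([W])) = [W]$.

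The hard part will be the $\Lie{K}$-equivariant Bott periodicity step: showing that the restricted Dirac class inverts the dual-Dirac induction class in $\KK^{\Lie{K}}(\CC,\CC)$ requires an explicit Kasparov product computation, typically via an unbounded deformation interpolating between the Dirac operator and a harmonic-oscillator-type cycle on $\lie{p}$. Once that equivariant Bott periodicity is in hand, the section property and the compatibility with tensor products follow essentially formally from the functoriality of the Kasparov product.
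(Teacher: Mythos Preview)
Your sketch is essentially correct and is indeed Kasparov's original argument on the symmetric space $\Lie{G}/\Lie{K}$, referenced here via \cite{Kas88}. Note, however, that the paper does not itself prove Theorem~\ref{thm:split_surjection}: it is stated as a known background result attributed to Kasparov and is later invoked in the proof of Theorem~\ref{thm:splitting}. The paper's own contribution is a \emph{different} splitting, specific to $\Lie{G}=\SL(3,\CC)$, built on the flag variety $\scrX=\Lie{G}/\Lie{B}$ rather than on $\Lie{G}/\Lie{K}$: the normalized BGG operators are assembled into Fredholm modules $\Kcycle{\lambda}\in\KK^{\Lie{G}}(C(\scrX),\CC)$, and $[\pi_\lambda]\mapsto\maptopt^*\Kcycle{\lambda}$ is shown to split restriction. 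Your $\Lie{G}/\Lie{K}$ approach is more general (it works uniformly for all semisimple $\Lie{G}$) and rests on equivariant Bott periodicity for the linear $\Lie{K}$-module $\lie{p}$; the paper's $\Lie{G}/\Lie{B}$ approach is specific to $\SL(3,\CC)$ and depends on the harmonic-analytic machinery of Section~\ref{sec:harmonic_analysis} together with the Kasparov Technical Theorem, but its payoff is a model of $\gamma$ factoring through the amenable parabolic $\Lie{B}$, which is precisely the feature of interest for Baum--Connes applications.
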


The unit in $R(\Lie{K})$ is the class of the trivial $\Lie{K}$-representation, and its image under the splitting is an idempotent in $R(\Lie{G})$.  This is $\gamma$.  

If $\gamma=1\in R(\Lie{G})$ then the restriction map is an isomorphism.  In this case, the `Dirac-dual Dirac method' of Kasparov implies that the Baum-Connes conjecture holds for all discrete subgroups of $\Lie{G}$.  This is the approach taken in the papers cited above, although in the case of $\Sp(n,1)$ a weaker notion of `triviality' for $\gamma$ must be used.

The idempotent $\gamma$ was originally defined via equivariant $\K$-homology for the proper $\Lie{G}$-space $\Lie{G/K}$ (\cite{Kas88}).  In the rank-one proofs mentioned above, however, $\gamma$ is more conveniently constructed using the compact space $\Lie{G/B}$, where $\Lie{B}$ is a minimal parabolic subgroup.  This can be explained by the fact that the induced representations from $\Lie{B}$ give a natural topological parameterization of (the relevant subset of) representations of $\Lie{G}$, namely the generalized principal series, including the complementary series.  It is also pertinent that $\Lie{B}$ is amenable, so itself satisfies Baum-Connes.

It is instructive to consider the construction of $\gamma$ in the simple example $\Lie{G}=\SL(2,\CC)$.  One begins with the Dolbeault complex for the homogeneous space $\Lie{G}/\Lie{B} \cong \CP^1$:
$$
  \Omega^{0,0}\CP^1 \xrightarrow{\Dolbeault} \Omega^{0,1}CP^1.
$$   
This is a $\Lie{G}$-equivariant elliptic complex.  Importantly, though, $\CP^1$ does not admit a $\Lie{G}$-invariant Riemannian metric.  The action is conformal (with respect to the natural $\Lie{K}$-equivariant metric), and the translation representation of $\Lie{G}$ on $L^2\Omega^{0,\bullet}\CP^1$ can be made unitary by the introduction of a scalar Radon-Nikodym factor.  But the operator $D \defeq \Dolbeault + \Dolbeault^*$ will not be $\Lie{G}$-equivariant, not even in the weak sense of defining an unbounded equivariant Fredholm module.  Somewhat magically though, replacing $D$ by its operator phase results in a bounded equivariant Fredholm module.  For this to work it is crucial that the $\Lie{G}$-action is conformal\footnote{In general, the conformality requirement is even stronger:  the ratio of the Radon-Nikodym factors in degrees $p$ and $p+1$ must be independent of $p$.  We will not explain this further.} on the Hermitian bundles $\Omega^{0,p}\CP^1$.

In order to maintain this crucial conformality property for the other rank one cases, one must use increasingly complicated subellitpic differential complexes --- the Rumin complex for $\SU(n,1)$; a quaternionic analogue thereof for $\Sp(n,1)$ --- and corresponding nonstandard pseudodifferential calculi.  We remark that $\K$-homological constructions using even nonstandard pseudodifferential calculi typically result in finitely-summable Fredholm modules.  Puschnigg \cite{Puschnigg} has shown that simple Lie groups of higher rank do not admit any nontrivial finitely summable Fredholm modules.

This motivates our construction using the Bernstein-Gelfand-Gelfand (`BGG') complex.

\begin{theorem}[Bernstein-Gelfand-Gelfand]
Let $\Lie{G}$ be a complex semisimple group and $\Lie{B}$ a minimal parabolic subgroup.  For any finite dimensional holomorphic representation $V$ of $\Lie{G}$ there is a differential complex, consisting of direct sums of homogeneous line bundles over $\Lie{G/B}$ and $\Lie{G}$-equivariant differential operators between them, which resolves $V$.
\end{theorem}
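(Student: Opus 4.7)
The plan is to proceed in two steps: first construct an algebraic resolution of $V$ by Verma modules in category $\mathcal{O}$, then dualize via homogeneous line bundles on $\Lie{G/B}$ to obtain the desired geometric differential complex. Since any finite-dimensional holomorphic $\Lie{G}$-representation decomposes into irreducibles, it suffices to treat $V = V(\lambda)$ irreducible of highest weight $\lambda$.

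For the algebraic step, the goal is to build a resolution
\[
0 \to C_n \to C_{n-1} \to \cdots \to C_0 \to V(\lambda) \to 0
\]
in category $\mathcal{O}$, where $n$ is the length of the longest element of $\Weylgroup$ and $C_k = \bigoplus_{w : \length(w) = k} M(w \cdot \lambda)$, with $M(\mu) = U(\lie{g}) \otimes_{U(\lie{b})} \CC_\mu$ the Verma module of highest weight $\mu$ and $w \cdot \lambda = w(\lambda + \rho) - \rho$ the dot action. The essential classical input is that for each covering pair $w \lessdot w'$ in the Bruhat order, the space $\Hom_{\lie{g}}(M(w' \cdot \lambda), M(w \cdot \lambda))$ is one-dimensional, spanned by a nonzero standard embedding. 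The differentials $d_k \colon C_k \to C_{k-1}$ will be signed sums of these edge embeddings, with the signs chosen so that every two-step composition around a length-two square in the Bruhat poset anticommutes. Exactness of the resulting complex will then be proved by combining Harish-Chandra's central character theorem (all $M(w \cdot \lambda)$ belong to the same block) with a formal-character count matching the Weyl character formula.

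For the geometric step, apply the category $\mathcal{O}$ duality $M \mapsto M^\vee$. Each dual Verma module $M(\mu)^\vee$ is isomorphic to the space of formal holomorphic sections at the identity coset of the homogeneous line bundle $L_{-\mu}$ over $\Lie{G/B}$; $\Lie{G}$-equivariance globalizes this to an identification of the dualized complex with a complex of holomorphic sections of line bundles on $\Lie{G/B}$. The dualized standard embeddings become $\Lie{G}$-equivariant differential operators between these line bundles (given, at the level of simple-reflection edges, by powers $X_{-\alpha}^m$ of opposite root vectors acting by left-invariant differentiation), and exactness of the algebraic resolution translates into exactness of the geometric complex, making it a resolution of $V(\lambda)^*$ — and, by applying the procedure instead to $V^*$, of $V$ itself.

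The main obstacle is the sign normalization in the algebraic step. Since the standard Verma embedding along each edge of the Bruhat graph is unique only up to scalar, the composition around any length-two square is determined only up to sign, and one must make a global choice of edge normalizations so that every such square anticommutes. This combinatorial problem was solved by Bernstein-Gelfand-Gelfand via an explicit inductive argument over the Bruhat poset; alternative treatments use Koszul duality or translation functors from the dominant regular block. Once the complex is assembled, establishing exactness rather than merely the complex property is a separate, nontrivial matter, handled via the central character and Weyl character formula arguments mentioned above.
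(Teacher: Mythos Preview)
The paper does not prove this theorem; it is quoted as background from the original source \cite{BGG}, with a footnote noting that Bernstein--Gelfand--Gelfand built a homological complex from Verma-module intertwiners and that the cohomological line-bundle complex is obtained by dualization (referring to the appendix of \cite{CSS}). Your proposal is precisely this classical two-step argument---Verma resolution in category~$\mathcal{O}$, then duality to pass to homogeneous line bundles and equivariant differential operators---so it agrees with the approach the paper attributes to the literature, and there is nothing further to compare.
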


The bundles in each degree here are not conformal, but their component line bundles are individually conformal.   (Trivially, any group action on a Hermitian line bundle is conformal.)  The question is whether this structure is enough to produce an element of equivariant $\K$-homology.  In this paper, we answer this question affirmatively in the case of $\Lie{G}=\SL(3,\CC)$.  We thereby obtain an explicit construction of the splitting map $ R(\Lie{K}) \to R(\Lie{G})$, and in particular a construction of $\gamma$, which factors through $\KK^\Lie{G}( C(\Lie{G/B}), \CC)$.

The construction is based upon harmonic analysis of $\SU(3)$ rather than some nonstandard pseudodifferential calculus.  An indication of the difficulties of a purely pseudodifferential approach is given in Chapter 5 of \cite{Yuncken-thesis}.  In fact, our construction could be made without any reference to pseudodifferential operators at all, though pseudodifferential theory has become so central to index theory that to do so might seem somewhat eccentric.

Much of the required harmonic analysis has been developed in \cite{Yuncken:PsiDOs} in the broader context of $\SU(n)$ ($n\geq2$).  We expect that the results of this paper should be extendable the groups $\SL(n,\CC)$, and indeed to complex semisimple groups in general.  The main technical difficulty in the case of $\SL(n,\CC)$ is an appropriate version of the the operator partition of unity of Lemma \ref{lem:operator_po1} of this paper.  For general semisimple groups, the required directional harmonic analysis is yet to be developed.

%We add two remarks of a historical nature.  Firstly, in hindsight, the differential complexes used to construct $\gamma$ for the rank one cases cited above were in each case variants of their respective BGG complexes.  This was an observation of Higson.  Secondly, we mention a paper of Chen \cite{Chen} which used a certain amount of harmonic analysis in studying the $KK$-theory of $\SO_0(n,1)$.

As for the Baum-Connes Conjecture itself, it is known that $\gamma\neq1$ for any group $\Lie{G}$ which has Kazhdan's property $T$. Therefore,
a direct translation of Kasparov's method cannot prove the
Baum-Connes conjecture for simple Lie groups of rank greater than
one---some subtle variation of Kasparov's argument would be
required. Nevertheless, it is expected that the present construction
will be useful for further study of the Baum-Connes conjecture.

\medskip

Let us now describe the BGG complex in more detail.  In fact, knowledge of the cohomological version of the BGG complex is unnecessary for the present paper, since our $\K$-homological version will be produced from scratch.  %The only point at which we will use the original complex is in the computation of its $\Lie{K}$-equivariant index, which could easily be replaced by an application of the Weyl character formula.  
But it is such a strong motivation that it is worth spending some time explaining it.

Finite dimensional holomorphic representations of $\Lie{G}$ are parameterized by their highest weights.  Let $V^\lambda$ denote the representation with highest weight $\lambda$.  Any weight $\mu$ of $\Lie{G}$ extends to a holomorphic character of $\Lie{B}$ (see Section \ref{sec:homogeneous_vector_bundles}), and we denote by $\Lhol{\mu}$ the corresponding induced holomorphic line bundle over $\scrX\defeq\Lie{G/B}$.  The Borel-Weil Theorem states that $V^\lambda$ is equivariantly isomorphic to the space of global holomorphic sections of $\Lhol{\lambda}$.  

Recall that the Weyl group $\Weylgroup$ is a group of reflections on the weight space.  It is generated by the {\em simple reflections}---reflections in the walls orthogonal to a choice of simple roots for $\Lie{G}$.  Word length in these generators defines a length function $\length:\Weylgroup\to\NN$.  We need the {\em shifted action} of the Weyl group defined by the formula $w\shiftedaction \mu \defeq w(\mu+\rho) - \rho$, where $\rho$ is the half-sum of the positive roots.  Bernstein, Gelfand and Gelfand \cite{BGG} showed that there is a holomorphic $\Lie{G}$-equivariant differential operator from $\Lhol{\mu}$ to $\Lhol{\nu}$ if and only if $\mu=w\shiftedaction\lambda$ and $\nu = w'\shiftedaction\lambda$ for some dominant weight $\lambda$ and some $w,w'\in\Weylgroup$ with $\length(w')\geq\length(w)$.  What is more, these operators can be assembled into an exact complex as follows\footnote{Strictly speaking, Bernstein, Gelfand and Gelfand made a homological complex by assembling intertwiners between Verma modules.  What we are calling the BGG complex here is a dual cohomological complex.  See the appendix of \cite{CSS} for an explanation of this. }.  One defines the degree $p$ cocycle space $C^p \defeq \bigoplus_{\length(w) = p} C^\infty(\scrX, \Lhol{w\shiftedaction \lambda})$.   The collection of equivariant differential operators between any $\Lhol{w\shiftedaction\lambda}$ and $\Lhol{w'\shiftedaction\lambda}$ with $\length(w)=p$, $\length(w')=p+1$ defines a matrix of operators $C^p \to C^{p+1}$.  With an appropriate choice of signs these operators resolve the Borel-Weil inclusion $V^\lambda \hookrightarrow C^\infty(\scrX;\Lhol{\lambda})$.

In the case of $\SL(3,\CC)$, we get a complex
\begin{equation}
\label{eq:BGG_resolution}
 \xymatrix@!C=7.2ex{
 & C^\infty(\scrX;\Lhol{\reflection{\alpha_1}\shiftedaction\lambda}) \ar[rr]\ar[ddrr]
   \ar@{.}[dd]|-{\bigoplus}
  && C^\infty(\scrX;\Lhol{\reflection{\alpha_1}\reflection{\alpha_2}\shiftedaction\lambda}) \ar[dr]
   \ar@{.}[dd]|-{\bigoplus}
 \\
 V^\lambda \hookrightarrow C^\infty(\scrX;\Lhol{\lambda})\ar[ur]\ar[dr]\quad\quad
 &&&&\quad\quad C^\infty(\scrX;\Lhol{w_\rho\shiftedaction\lambda}) \\
 & C^\infty(\scrX;\Lhol{\reflection{\alpha_2}\shiftedaction\lambda})\ar[rr]\ar[uurr]
  && C^\infty(\scrX;\Lhol{\reflection{\alpha_2}\reflection{\alpha_1}\shiftedaction\lambda})\ar[ur] \
  }
\end{equation}
where $\alpha_1$, $\alpha_2$ and $\rho = \alpha_1+\alpha_2$ are the positive roots, and $\reflection{\alpha}$ denotes the reflection in the wall orthogonal to $\alpha$.

In this paper, we define a `normalized', {\em i.e.}, $L^2$-bounded, version of this complex which is analogous to the equivariant Fredholm module constructed above from the Dolbeault complex of $\CP^1$.

\medskip

To complete this overview, we give a very brief description of the harmonic analysis upon which our $\K$-homological BGG construction is based.  The space $\scrX\defeq \Lie{G/B}$ is the complete flag variety of $\CC^3$.   Corresponding to the simple roots $\alpha_1$ and $\alpha_2$, there are $\Lie{G}$-equivariant fibrations $\scrX\to\scrX[i]$ $(i=1,2)$ where $\scrX[1]$ and $\scrX[2]$ are the Grassmannians of lines and planes in $\CC^3$.  As described in \cite{Yuncken:PsiDOs}, associated to each of these fibrations is a $C^*$-algebra $\scrK[\alpha_i]$ of operators on the $L^2$-section space of any homogeneous line bundle over $\scrX$.  This algebra contains, in particular, the longitudinal pseudodifferential operators of negative order tangent to the given fibration.  A key property is that the intersection $\scrK[\alpha_1]\cap \scrK[\alpha_2]$ consists of compact operators.  Ultimately, this allows us to apply the Kasparov Technical Theorem to construct a Fredholm module from the normalized BGG operators.

\medskip

The structure of the paper is as follows.  Section \ref{sec:preliminaries} gives the background on the structure theory of the semisimple Lie group $\Lie{G}=\SL(3,\CC)$, the flag variety $\scrX$ and its homogeneous line bundles, mainly for the purpose of setting notation.  

In Section \ref{sec:harmonic_analysis} we review the $C^*$-algebras $\scrK[\alpha_i]$ of \cite{Yuncken:PsiDOs} and their relation to longitudinal pseudodifferential operators on the flag variety $\scrX$.  We also prove two important new results concerning these algebras.  For the sake of stating these results elegantly, it is convenient to place the $C^*$-algebras $\scrK[\alpha_i]$ in the context of $C^*$-categories (see Section \ref{sec:categories} for details).

\begin{theorem}
\label{thm:harmonic_analysis_results}
Let $E$, $E'$ be $\Lie{G}$-homogeneous line bundles over $\scrX$.  Let $\scrA$ denote the simultaneous multiplier category of $\scrK[\alpha_1]$ and $\scrK[\alpha_2]$ (see Definition \ref{def:A}).
\begin{enumerate}
\item  The translation operators $g: \LXE \to \LXE$ belong to $\scrA$, for all $g\in\Lie{G}$.
\item  If $T:\LXE \to L^2(\scrX;E')$ is a longitudinal pseudodifferential operator of order zero tangent to one of the fibrations $\scrX\to\scrX[i]$ ($i=1,2$), then $T\in\scrA$.
\end{enumerate}
\end{theorem}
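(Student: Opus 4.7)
The two parts of the theorem are handled by somewhat different techniques, both ultimately resting on the $\Lie{G}$-equivariance of the canonical fibrations $\fibration[i]:\scrX\to\scrX[i]$ and on the harmonic-analytic description of the $C^*$-algebras $\scrK[\alpha_i]$ from \cite{Yuncken:PsiDOs}.  In each case the strategy is to verify the multiplier property at $\scrK[\alpha_1]$ and $\scrK[\alpha_2]$ separately, by showing that both left and right composition by the operator in question preserves a convenient generating set for the target.

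For part (i), the key observation is that each $\fibration[i]$ is $\Lie{G}$-equivariant, so translation by $g$ sends $\fibration[i]$-fibers to $\fibration[i]$-fibers.  Writing $\LXE$ as the continuous field over $\scrX[i]$ of $L^2$-sections of $E$ along these fibers, the translation $g$ is realized as a norm-continuous family of unitaries between fiber Hilbert spaces.  Since elements of $\scrK[\alpha_i]$ are characterized (up to the technical description in \cite{Yuncken:PsiDOs}) as norm-continuous fields of compact operators between the fiber Hilbert spaces, both $gT$ and $Tg$ remain of that form, which establishes the multiplier property of $g$ with respect to both $\alpha_1$ and $\alpha_2$.

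For part (ii), assume without loss of generality that $T$ is a longitudinal pseudodifferential operator of order zero tangent to $\fibration[1]$.  That $T$ multiplies $\scrK[\alpha_1]$ is routine: the longitudinal symbol calculus along $\fibration[1]$ shows that the composition of an order-zero operator with an order $\leq -1$ operator has order $\leq -1$, and such operators already lie in $\scrK[\alpha_1]$; the remaining generators, such as multiplication by functions in $C(\scrX)$, are handled by analogous symbolic computations.

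The substantive step is to show that $T$ also multiplies $\scrK[\alpha_2]$, despite the fact that $T$ does not in general respect the $\fibration[2]$-fibration.  The plan is to exploit the $\SU(3)$-representation-theoretic description of $\scrK[\alpha_2]$ via the Peter-Weyl decomposition of $\LXE$, which diagonalizes the $\fibration[2]$-longitudinal structure.  One argues that the principal symbol of $T$, being tangent to $\alpha_1$ and hence transverse to the $\fibration[2]$-fibers, forces enough decay of the off-diagonal $\SU(3)$-matrix coefficients of $T$ that composition with elements of $\scrK[\alpha_2]$ stays inside $\scrK[\alpha_2]$.  A convenient reduction is to approximate $T$, in an appropriate topology, by integrals of translation operators against compactly supported smooth kernels on $\Lie{G}$, and to invoke part (i) together with a density argument.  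This transverse multiplier property is the main obstacle; it is where the ``directional'' harmonic analysis developed in the paper is deployed, and also where the restriction to $\SL(3,\CC)$ is most keenly felt, as discussed in the introduction.
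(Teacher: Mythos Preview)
Both parts of your proposal contain genuine gaps, and in each case the paper's route is substantially different from what you sketch.

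\textbf{Part (i).} Your argument rests on characterizing $\scrK[\alpha_i]$ as ``norm-continuous fields of compact operators between the fiber Hilbert spaces'' of $\fibration[i]$. That is not the definition: $\scrK[\alpha_i]$ is the norm closure of the $\Lie{K}_i$-harmonically finite operators, i.e., operators whose matrix with respect to the \emph{right} $\Lie{K}_i$-isotypical decomposition has only finitely many nonzero entries (Definition~\ref{def:A_K}). This class is much larger than fibrewise compacts: any bounded operator sandwiched between $p_F$'s for finite $F\subset\Khat_i$ is $\Lie{K}_i$-harmonically finite, even if it permutes or mixes the $\fibration[i]$-fibres arbitrarily. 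So the fact that $g$ sends fibres to fibres does not by itself yield the multiplier property. The paper instead reduces via the $\Lie{KAK}$ decomposition to $g=a\in\Lie{A}$, computes the infinitesimal action $U_\mu(A)$ on matrix units (Lemma~\ref{lem:a-action}), shows it is \emph{tridiagonal} with respect to $\Lie{K}_i$-types with at most linear growth on the off-diagonal (Lemma~\ref{lem:tridiagonal}), and then runs a commutator estimate against a carefully chosen diagonal operator $\Phi$ to control how $U_\mu(e^A)$ spreads a single $\Lie{K}_i$-type.

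\textbf{Part (ii).} Your ``convenient reduction'' of approximating $T$ by integrals $\int_\Lie{G} f(g)\,U(g)\,dg$ does not work: such integrals produce left-convolution operators, and a general $\foliation[1]$-longitudinal pseudodifferential operator of order zero is not of this form (nor approximable by such in any useful topology). The paper's proof of the hard direction, that $T\in\PsiDO[1]^0$ multiplies $\scrK[\alpha_2]$, is carried out in Appendix~\ref{sec:PsiDOs_in_A} and is quite concrete. One first treats the single operator $\Phase{X_1}$: its matrix entries with respect to the $\Lie{K}_2$-adapted Gelfand--Tsetlin basis $\{\eta_\Lambda\}$ are governed by a three-term recursion whose approximate solution is given by Legendre polynomials (Lemma~\ref{lem:approximate_solution}), and an explicit limit computation (Lemma~\ref{lem:limit}) shows $(\Phase{X_1})\,p_{\sigma_0}\in\scrK[\alpha_2]$ for the trivial $\Lie{K}_2$-type. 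A partition-of-unity trick (Lemma~\ref{lem:finitely_generated}) then extends this to arbitrary $\Lie{K}_2$-types, yielding $\Phase{X_1}\in\scrA$. Finally, general order-zero $T$ are obtained from $\Phase{X_i}$ and multiplication operators via Connes' symbol exact sequence and Stone--Weierstrass. None of this is captured by the decay-of-matrix-coefficients heuristic or the translation-integral reduction you propose.
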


Theorem \ref{thm:harmonic_analysis_results}(i) is proven in Section \ref{sec:principal_series}.  Part (ii) is restated in Theorem \ref{thm:PsiDOs_in_A}.  The proof requires some lengthy computations in $\SU(3)$ harmonic analysis which are presented in Appendix \ref{sec:PsiDOs_in_A}.  
%We remark that in \cite{Yuncken:PsiDOs} it is proven that the longitudinal pseudodifferential operators of order zero tangential to $\scrX\to\scrX[i]$ are multipliers of $\scrK[\alpha_i]$, but the fact that they also multiply $\scrK[\alpha_j]$ for $j\neq i$ is not considered.

In Section \ref{sec:construction}, we combine the above results to construct an element of  $\KK^\Lie{G}(C(\scrX),\CC)$ from the BGG complex.  We also explain why this yields the splitting of the restriction morphism $R(\Lie{G}) \to R(\Lie{K})$.

\medskip

Part of this work appeared in the author's doctoral dissertation
\cite{Yuncken-thesis}. I would like to thank my thesis adviser, Nigel Higson.  I would also like to thank Erik Koelink for
several informative conversations.

\section{Notation and Preliminaries}
\label{sec:preliminaries}

\subsection{Lie groups}
\label{sec:Lie_groups}

Throughout this paper $\Lie{G}$ will denote the group $\SL(3,\CC)$.  We fix notation for the following subgroups:   $\Lie{K}=\SU(3)$, its maximal compact subgroup; $\Lie{H}$, the Cartan subgroup of diagonal matrices; $\Lie{A}$, the subgroup of diagonal matrices with positive real entries; $\Lie{M}  = \Lie{H}\cap\Lie{K}$, the maximal torus of $\Lie{K}$; $\Lie{N}$, the subgroup of upper triangular unipotent matrices; and $\Lie{B}=\Lie{MAN}$ the subgroup of upper triangular matrices.  Their Lie algebras are denoted  $\lie{g}$, $\lie{k}$, $\lie{h}$, $\lie{a}$, $\lie{m}$, $\lie{n}$ and $\lie{b}$.   
%By a {\em character} of one of these Lie algebras, we will mean the derivative of a character of the corresponding group.

We use $V^\dual$ to denote the dual of a complex vector space $V$.   
%A {\em character} of a Lie algebra will refer to a character which integrates to the 
We make the usual identifications of the complexifications $\lie{m}_\CC$ and $\lie{a}_\CC$ with $\lie{h}$ by extending the inclusions $\lie{a},\lie{m}\hookrightarrow\lie{h}$ to $\CC$-linear maps.  We thereby identify characters of $\Lie{A}$ and $\Lie{M}$ with elements of $\lie{h}^\dual$.  Characters of $\lie{h}$ will be denoted by $\chi = \chiM \oplus \chiA$, where $\chiM$ and $\chiA$ are the restrictions of $\chi$ to $\lie{m}$ and $\lie{a}$, respectively.  The corresponding group character of $\Lie{H}$ will be denoted $e^\chi$.  The weight lattice in $\lie{m}_\CC^\dual \cong \lie{h}^\dual$ will be denoted by $\Lambda_W$.

The set of roots of $\Lie{K}$ is denoted $\Delta$.  We fix the notation
$$
X_{\alpha_1} = \left(\begin{array}{ccc} 0&1&0\\0&0&0\\0&0&0 \end{array}\right),~
X_{\alpha_2} = \left(\begin{array}{ccc} 0&0&0\\0&0&1\\0&0&0 \end{array}\right),~
X_\rho = \left(\begin{array}{ccc} 0&0&1\\0&0&0\\0&0&0 \end{array}\right)~
\in \lie{k}_\CC \cong \lie{g},
$$
which are root vectors for the roots $\alpha_1$, $\alpha_2$ and $\rho\defeq\alpha_1+\alpha_2$.  We fix these as our set of positive roots $\Delta^+$, so $\Sigma \defeq \{\alpha_1, \alpha_2\}$ is the set of simple roots.  For each $\alpha\in\Delta^+$, $Y_\alpha$ will denote the transpose of $X_\alpha$.  We abbreviate $X_{\alpha_i}$ and $Y_{\alpha_i}$ to $X_i$ and $Y_i$, whenever convenient.

We put $H_i \defeq [X_i, Y_i] \in \lie{m}_\CC$.  The elements $X_i, Y_i, H_i$ span a Lie subalgebra isomorphic to $\slx(2,\CC)$, which we denote by $\lie{s}_i$.  We also put
$$
  H_1' \defeq \left(\begin{array}{ccc} 1&0&0\\0&1&0\\0&0&-2 \end{array}\right), \qquad
  H_2' \defeq \left(\begin{array}{ccc} -2&0&0\\0&1&0\\0&0&1 \end{array}\right) \qquad 
  \in \lie{m}_\CC \cong \lie{h},
$$
so that for fixed $i=1,2$, $H_i$ and $H_i'$ span $\lie{h}$ and $H_i'$ commutes with $\lie{s}_i$.

The Weyl group of $\Lie{G}$ is $\Lie{W}\cong S_3$.  We let $\reflection{\alpha}$ denote the reflection in the wall orthogonal to the root $\alpha$.  The {simple reflections} $w_{\alpha_1}$ and $w_{\alpha_2}$ are generators of $W$, and the minimal word length in these generators defines the length function $\length$ on $W$.

%------------------------------------------------------------------------------------

\subsection{Homogeneous vector bundles}
\label{sec:homogeneous_vector_bundles}

Throughout, $\scrX$ will denote the homogeneous space $\scrX=\Lie{G}/\Lie{B} = \Lie{K}/\Lie{M}$.

Let $\chi = \chiM\oplus\chiA$ be a character of $\lie{h}$.  As usual, we extend it trivially on $\lie{n}$ to a character of $\lie{b}$.  We use $L_\chi$ to denote the $\Lie{G}$-homogeneous line bundle over $\scrX$ which is induced from $\chi$.  That is, continuous sections of $L_\chi$ are identified with $\Lie{B}$-equivariant functions on $\Lie{G}$ as follows:
\begin{multline}
  \CXL[\chi] = \{ \sect:\Lie{G}\to\CC \text{ continuous }\st \sect (gman) = e^{\chiM}(m^{-1})e^{\chiA}(a^{-1}) \sect (g) 
     \\
   \forall g\in\Lie{G}, m\in\Lie{M}, a\in\Lie{A}, n\in\Lie{N} \}.
\label{eq:B-equivariance}
\end{multline}
The $\Lie{G}$-action on sections is by left translation:  $g'\cdot\sect(g) \defeq \sect({g'}^{-1}g)$.
Restricting to $\Lie{K}$, we have the `compact picture' of $\CXL[\chi]$:
\begin{multline}
  \CXL[\chi] \cong \{ \sect:\Lie{K} \to \CC \text{ continuous } \st \sect(km) = e^{\chiM}(m^{-1})s(k) 
     \\
   \forall k\in\Lie{K}, m\in\Lie{M} \}.
\label{eq:M-equivariance}
\end{multline}
Note that, as a $\Lie{K}$-homogeneous bundle, $L_\chi$ depends only on $\chiM$.

The compact picture gives a Hermitian metric on $L_\chi$.  Specifically, the pointwise inner product of sections is given by
$$
  \ip{\sect_1(k),\sect_2(k)} = \overline{\sect_1(k)} \sect_2(k) \quad \in C(\scrX).
$$
%(This is right $\Lie{M}$-invariant, so well-defined over $\scrX=\Lie{K/M}$.)   
The $L^2$-section space $\LXL[\chi]$ is the completion of $\CXL[\chi]$ with respect to the inner product
\begin{equation}
\label{eq:inner_product}
  \ip{\sect_1,\sect_2} = \int_\Lie{K} \overline{\sect_1(k)} \sect_2(k) \, dk.
\end{equation}

Some cases warrant special notation.  If $\mu$ is a weight for $\Lie{K}$, we let $\Lhol{\mu}$ denote the holomorphic line bundle $L_{\mu\oplus\mu}$.  We also let $E_\mu$ denote the `unitarily induced' bundle $L_{\mu\oplus\rho}$. On $E_\mu$ the translation action $U_\mu:\Lie{G} \to \scrL(\LXE[\mu])$ is a {unitary} representation.  These will be the main focus of our attention.

Restricting $U_\mu$ to $\Lie{K}$, $\LXE[\mu]$ becomes a subrepresentation of the left regular representation $\Lie{K}$.  If $R$ denotes the \emph{right} regular representation, then the equivariance condition of Equation \eref{eq:M-equivariance} becomes $R(m) \sect = e^{-\mu}(m) \sect$ for all $m\in\Lie{M}$.  Infinitesimally,
\begin{eqnarray}
  \LXE[\mu] &=& \{\sect\in L^2(\Lie{K}) \st R(M) \sect = -\mu(M) \sect \text{ for all } M\in\lie{m} \} \nonumber \\
    &=& p_{-\mu} L^2(\Lie{K}),
\label{eq:m-equivariance}
\end{eqnarray}
where $p_{-\mu}$ denotes the orthogonal projection onto the $(-\mu)$-weight space of the {\em right} regular representation of $\Lie{K}$ on $L^2(\Lie{K})$.

Let $\chi$, $\chi'$ be characters of $\Lie{B}$.  If $f\in\CXL[\chi'-\chi]$ then pointwise multiplication by $f$, denoted $\multop{f}$, maps $\CXL[\chi]$ to $\CXL[\chi']$.  This gives a $\Lie{G}$-equivariant bundle isomorphism $\End(L_\chi,L_{\chi'}) \cong L_{\chi'-\chi}$.  In particular, $\End(E_\mu, E_{\mu'}) \cong L_{(\mu'-\mu)\oplus0}$ for any weights $\mu$, $\mu'$.  Moreover, for any $f\in\CXL[(\mu'-\mu)\oplus0]$,
\begin{equation}
\label{eq:covariance_of_multops}
  U_{\mu'}(g) \multop{f} U_\mu(g^{-1}) = \multop{g\cdot f}.
\end{equation}
%As a $\Lie{K}$-equivariant isomorphism, $\End(E_\mu,E_{\mu'}) \cong E_{\mu'-\mu}$, and in particular $E_\mu^* \cong E_{-\mu}$.  The map
%$$
%  \CXE[\mu]  \to \CXE[-\mu]; \quad \sect \mapsto \overline{\sect}
%$$
%is the canonical antilinear isomorphism induced by the inner product \ref{eq:inner_product}.
In this picture, a locally trivializing partition of unity on $E_\mu$ takes the following form.

\begin{lemma}
\label{lem:partition_of_unity}

For any weight $\mu$, there exists a finite collection of continuous sections $\sectionpoi_1,\ldots,\sectionpoi_n \in \CXL[\mu\oplus0]$ such that $\sum_{j=1}^n \multop{\sectionpoi_j} \multop{\overline{\sectionpoi_j}} = 1$.

\end{lemma}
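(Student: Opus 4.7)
\emph{Proof plan.} The plan is to reduce the statement to the familiar construction of a partition of unity subordinate to a finite trivializing cover, exploiting the fact that $\scrX = \Lie{K}/\Lie{M}$ is a compact smooth manifold and $L_{\mu\oplus 0}$ is a $\Lie{G}$-equivariant Hermitian line bundle over it.

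First I would unpack what the required identity actually says. If $\sectionpoi_j\in\CXL[\mu\oplus0]$ is viewed in the compact picture as a continuous function on $\Lie{K}$ satisfying $\sectionpoi_j(km) = e^{-\mu}(m)\sectionpoi_j(k)$, then $\overline{\sectionpoi_j}$ lies in $\CXL[(-\mu)\oplus0]$, and by the discussion preceding \eref{eq:covariance_of_multops} the composition $\multop{\sectionpoi_j}\multop{\overline{\sectionpoi_j}}$ is just pointwise multiplication by the scalar function $|\sectionpoi_j|^2$. The absolute value is well-defined on $\scrX$ because $|e^{-\mu}(m)| = 1$ for $m\in\Lie{M}$. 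Hence the desired identity $\sum_j \multop{\sectionpoi_j}\multop{\overline{\sectionpoi_j}} = 1$ is precisely the scalar condition $\sum_j |\sectionpoi_j|^2 \equiv 1$ on $\scrX$.

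Next I would choose, by compactness of $\scrX$, a finite open cover $\{U_j\}_{j=1}^n$ admitting unitary local trivializations $\trivialization_j: L_{\mu\oplus0}|_{U_j} \xrightarrow{\cong} U_j\times\CC$, together with a continuous partition of unity $\{\eta_j\}$ subordinate to this cover, with $\mathrm{supp}\,\eta_j \subset U_j$ and $\sum_j \eta_j \equiv 1$. I would then define
$$
   \sectionpoi_j(x) \defeq \trivialization_j^{-1}\bigl(x,\sqrt{\eta_j(x)}\bigr) \text{ for } x\in U_j,
$$
extended by zero outside $U_j$. Continuity across $\partial U_j$ is automatic because $\sqrt{\eta_j}$ vanishes there.

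Finally, because $\trivialization_j$ is a pointwise unitary isomorphism of Hermitian lines, $|\sectionpoi_j(x)|^2 = \eta_j(x)$ on $U_j$ and vanishes elsewhere, so $\sum_j |\sectionpoi_j|^2 = \sum_j\eta_j \equiv 1$. There is no real obstacle to overcome here; the only conceptual point worth flagging is that although $\sectionpoi_j$ takes values in a nontrivial line bundle, the product $\multop{\sectionpoi_j}\multop{\overline{\sectionpoi_j}}$ is scalar multiplication and is therefore controlled directly by the pointwise Hermitian metric on $L_{\mu\oplus0}$.
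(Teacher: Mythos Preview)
Your proposal is correct and follows essentially the same approach as the paper: take a partition of unity subordinate to a finite trivializing cover, take square roots, and push through the local trivializations to obtain sections of $L_{\mu\oplus0}$ whose Hermitian norms squared sum to $1$. Your version is more explicit about why $\multop{\sectionpoi_j}\multop{\overline{\sectionpoi_j}}$ is scalar multiplication by $|\sectionpoi_j|^2$ and why the unitarity of the trivializations matters, but the underlying argument is identical.
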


\begin{proof}
Let $f_1,\ldots,f_n\in \CX$ be a partition of unity subordinate to a locally trivializing cover of $E_\mu$.   Composing $f_j^{\half}$ with the corresponding local trivialization $L_0 \xrightarrow{\cong} L_{\mu\oplus0}$ gives the sections $\sectionpoi_j$.
\end{proof}

%-----------------------------------------------------------------------

\subsection{Parabolic subgroups and equivariant fibrations}
\label{sec:parabolic_subgroups}

Let $\Lie{P}$ be a parabolic subgroup, $\Lie{B}\leq \Lie{P} \leq \Lie{G}$, with Lie algebra $\lie{p}$.  Let $S\subseteq \Sigma$ be the set of simple roots $\alpha$ such that the root space $\lie{g}_{-\alpha}$ is contained in $\lie{p}$.  This set classifies $\Lie{P}$, and we therefore introduce the notation 
%\begin{eqnarray*}
$$
  \Lie{P}_{\Sigma} \defeq \Lie{G},  \quad
  \Lie{P}_{\{\alpha_1\}} \defeq \left\{ \smatrix{ *&*&*\\ *&*&*\\0&0&* } \right\}, \quad
  \Lie{P}_{\{\alpha_2\}} \defeq \left\{ \smatrix{ *&*&*\\ 0&*&*\\0&*&* } \right\}, \quad
  \Lie{P}_{\emptyset} \defeq \Lie{B}.
$$
%\end{eqnarray*}
(Here $*$ denotes possibly nonzero entries.)  We will simplify this by writing $\Lie{P}_i \defeq \Lie{P}_{\{\alpha_i\}}$ whenever convenient.

For $i=1,2$, let $\scrX[i] \defeq \Lie{G}/\Lie{P}_i$.  The natural maps $\fibration[i] : \scrX \to \scrX[i]$ are equivariant fibrations with fibres $\Lie{P}_i/\Lie{B} \cong \CP^1$.  We will denote the corresponding foliations of $\scrX$ by $\foliation[i]\defeq \ker D\fibration[i]$.

Denote the compact part of $\Lie{P}_S$ by $\Lie{K}_S \defeq \Lie{P}_S \cap \Lie{K}$. Explicitly,
\begin{eqnarray*}
  \Lie{K}_\Sigma &\defeq& \Lie{K}, \\
  \Lie{K}_1 &\defeq&  \Lie{P}_1 \cap \Lie{K}  
    = \left\{ \ulmatrix{A}{0}{0}{0&0&z} \bigg| \quad
     A\in\mathrm{U}(2), ~  z = (\det A)^{-1}  \right\}, \\
  \Lie{K}_2 &\defeq&  \Lie{P}_2 \cap \Lie{K}  
    = \left\{ \drmatrix{z&0&0}{0}{0}{A} \bigg| \quad
     A\in\mathrm{U}(2), ~  z = (\det A)^{-1}  \right\}, \\
  \Lie{K}_\emptyset &\defeq& \Lie{M}.
\end{eqnarray*}
Then $\scrX[i]=\Lie{K}/\Lie{K}_i$ ($i=1,2$).

The complexified Lie algebra $(\lie{k}_i)_\CC$ of $\Lie{K}_i$ decomposes as $\lie{s}_i\oplus\lie{z}_i$, where $\lie{s}_i \defeq \vspan{X_i, H_i, Y_i} \cong \slx(2,\CC)$ and $\lie{z}_i \defeq \vspan{H_i'}\subset \lie{m}_\CC$.  (Notation as in Section \ref{sec:Lie_groups}.)    For the sake of fixing notation, we recall the representation theory of $\lie{s}_i \cong \slx(2,\CC)$.  The weights of $\slx(2,\CC)$ are parameterized by the integers.  The restriction of a weight $\mu$ of $\Lie{K}$ to a weight of $\lie{s}_i$ is $\mu_i \defeq \mu(H_i) \in \ZZ$.  The dominant weights are the nonnegative integers $\NN$.  

Let $X,H,Y\in\slx(2,\CC)$ be the basis elements corresponding to $X_i, H_i. Y_i\in\lie{s}_i$.  The irreducible representation of $\slx(2,\CC)$ with highest weight $\delta\in\NN$ will be denoted $V^\delta$.  It has an orthonormal basis of weight vectors $\{ e_\delta, e_{\delta-2}, \ldots, e_{-\delta+2}, e_{-\delta} \}$, such that
\begin{eqnarray}
\label{eq:X-formula}
  X\cdot e_j &=& \half \sqrt{(\delta-j)(\delta+j+2)} \,e_{j+2} \\
\label{eq:H-formula}
  H\cdot e_j &=&  j\, e_{j} \\
\label{eq:Y-formula}
  Y\cdot e_j &=& \half \sqrt{(\delta-j+2)(\delta+j)} \, e_{j-2} 
\end{eqnarray}

\subsection{Harmonic analysis}
\label{sec:harmonic_notation}

For any compact group $\Lie{C}$, we will use $\irrep{C}$ to denote the set of irreducible representations of $\Lie{C}$, often referred as {\em $\Lie{C}$-types}.  For any unitary representation $\pi$ of $\Lie{C}$, we use $V^\pi$ to denote its representation space, and $\pi^\dual$ to denote its contragredient representation.

For a representation $\pi$ of $\Lie{K}=\SU(3)$ and elements $\xi\in V^\pi$, $\eta^\dual\in V^{\pi\dual}$, we use $\matrixunit{\eta^\dual}{\xi}$ to denote the matrix unit $\matrixunit{\eta^\dual}{\xi}(k) \defeq (\eta^\dual, \pi(k)\xi)$.  
Recall the Peter-Weyl isomorphism
\begin{eqnarray*}
  \bigoplus_{\pi\in\Khat} V^{\pi\dual} \otimes V^\pi & \cong & L^2(\Lie{K}) \\
    \eta^\dual \otimes \xi & \mapsto & (\dim V^\pi)^\half \matrixunit{\eta^\dual}{\xi}.
\end{eqnarray*}
which intertwines $\bigoplus \pi$ and $\bigoplus\pi^\dual$ with the left and right regular representations, respectively.
If $p_\mu$ denotes the projection onto the $\mu$-weight space of a representation then from Equation \ref{eq:m-equivariance},
$$
  \LXE[\mu] \cong  \bigoplus_{\pi\in\Khat} V^{\pi\dual} \otimes p_{-\mu}V^\pi  .
$$

%-----------------------------------------------------------------------

\section{Harmonic analysis on the flag variety}
\label{sec:harmonic_analysis}

\subsection{Harmonic $C^*$-categories}
\label{sec:harmonic_decompositions}
\label{sec:categories}

We will make much use of the results of \cite{Yuncken:PsiDOs} regarding harmonic analysis on flag manifolds for $\SL(n,\CC)$.  In this section, we review the major definitions and results of that paper.  Because we are only interested in $n=3$ here, we will simplify the notation somewhat.  

Let $\Lie{K}'$ be a closed subgroup of $\Lie{K}=\SU(3)$.  Let $\scrH$ be a Hilbert space equipped with a unitary representation of $\Lie{K}$.  For $\sigma\in\Khat'$, we let $p_\sigma$ denote the orthogonal projection onto the $\sigma$-isotypical subspace of $\scr{H}$ (with representation restricted to $\Lie{K}'$).  %This can be written as a strongly convergent integral
%$$
%  p_\sigma = \int_{h\in \Lie{K}'} \overline{\chi_\sigma(h)} U(h) \, dh,
%$$
%where $\chi_\sigma$ is the character of $\sigma$.
If $\irrepset\subseteq \Khat'$ is a set of $\Lie{K}'$-types, we let $p_F \defeq \sum_{\sigma\in\irrepset} p_\sigma$. 

We are particularly interested in the four subgroups $\Lie{K} \geq \Lie{K}_1, \Lie{K}_2, \geq \Lie{M}$ above.   Note that the isotypical subspaces of $\Lie{M}$ are the weight spaces.

If $\Lie{K}''$ is a subgroup of $\Lie{K}'$, then the isotypical projections of $\Lie{K}'$ and $\Lie{K}''$ commute.  In particular, the isotypical projections of $\Lie{K}$, $\Lie{K}_1$ and $\Lie{K}_2$ commute with the weight-space projections.  These isotypical projections can therefore be restricted to any weight-space of a unitary $\Lie{K}$-representation.

\begin{definition}
A {\em harmonic $\Lie{K}$-space} $H$ is a direct sum of weight spaces of unitary $\Lie{K}$-representations:  $H = \bigoplus_{k} p_{\mu_k} \scr{H}_k$ for some weights $\mu_k$ and unitary $\Lie{K}$-representations on $\scr{H}_k$.

A harmonic $\Lie{K}$-space $H$ is called {\em finite multiplicity} if for every $\pi\in\Khat$, $p_\pi H$ is finite dimensional.
\end{definition}

\begin{example}
\label{ex:finite_multiplicities}
The (right) regular representation is a finite multiplicity harmonic $\Lie{K}$-space by the Peter-Weyl Theorem, as is $\LXE[\mu]$ for any weight $\mu$.  More generally, any homogeneous vector bundle $E$ over $\scrX$ decomposes equivariantly into line bundles, so $\LXE$ is a harmonic $\Lie{K}$-space.  
\end{example}

\begin{definition}
\label{def:A_K}
Let $S\subseteq\Sigma$.  Let $A:H \to H'$ be a bounded linear operator between harmonic $\Lie{K}$-spaces.  For $\sigma',\sigma\in\Khat_S$, let  $A_{\sigma'\sigma} \defeq p_\sigma' A p_\sigma$, so that $(A_{\sigma'\sigma})$ is the matrix decomposition of $A$ with respect to the decompositions of $H,H'$ into $\Lie{K}_S$-types.
\begin{enumerate}
\item We say $A$ is {\em $\Lie{K}_S$-harmonically proper} if the matrix $(A_{\sigma'\sigma})$ is row- and column-finite, {\em i.e.}, if for every $\sigma \in \Khat_S$, there are only finitely many $\sigma'\in \Khat_S$ for which either $A_{\sigma'\sigma}$ or $A_{\sigma\sigma'}$ is nonzero.
\item We say $A$ is {\em $\Lie{K}_S$-harmonically finite} if the matrix $(A_{\sigma'\sigma})$ has only finitely many nonzero entries.
\end{enumerate}

Define $\scrA[S](H,H')$, resp.~$\scrK[S](H,H')$, to be the operator-norm closure of the $\Lie{K}_S$-harmonically proper, resp.~$\Lie{K}_S$-harmonically finite, operators from $H$ to $H'$.  
\end{definition}

If $H=H'$, we write $\scrA[S](H)$  and $\scrK[S](H)$ for $\scrA[S](H,H)$  and $\scrK[S](H,H)$, respectively.  These are $C^*$-subalgebras of the algebras $\scrL(H)$ of bounded operators on $H$.  Letting $H$ and $H'$ vary, we consider $\scrA[S]$ and $\scrK[S]$ as defining $C^*$-categories of operators between harmonic $\Lie{K}$-spaces.
We also use $\scrK$ and $\scrL$ to denote the $C^*$-categories of compact operators and bounded operators, respectively, between Hilbert spaces.

\begin{lemma}[\thmcitemore{Yuncken:PsiDOs}{Lemma 3.2}]

If $S\subseteq S' \subseteq \Sigma$ then $\scrK[S'] \subseteq \scrK[S]$.

\end{lemma}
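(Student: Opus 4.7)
The plan is to use the fact that $S \subseteq S'$ implies $\Lie{P}_S \subseteq \Lie{P}_{S'}$, and hence $\Lie{K}_S \subseteq \Lie{K}_{S'}$. The inclusion of $C^*$-categories will then follow by reducing to the corresponding statement at the level of harmonically finite operators and taking norm closures. The main point is that the branching of $\Lie{K}_{S'}$-representations to $\Lie{K}_S$ is finite, so every $\Lie{K}_{S'}$-isotypical projection $p_{\sigma'}$ absorbs into a finite sum of $\Lie{K}_S$-isotypical projections $p_\tau$.

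Concretely, I would first observe that since $\Lie{K}_S \leq \Lie{K}_{S'}$, the isotypical projections $p_{\sigma'}$ ($\sigma' \in \Khat_{S'}$) commute with the isotypical projections $p_\tau$ ($\tau \in \Khat_S$), by the remark in Section \ref{sec:harmonic_decompositions}. Moreover, for fixed $\sigma' \in \Khat_{S'}$, the representation space $V^{\sigma'}$ is finite dimensional, so its restriction $\sigma'|_{\Lie{K}_S}$ decomposes as a finite direct sum of $\Lie{K}_S$-types. Consequently, there is a finite set $\mathrm{Br}(\sigma') \subseteq \Khat_S$ such that $p_{\sigma'} p_\tau = 0$ whenever $\tau \notin \mathrm{Br}(\sigma')$, and $p_{\sigma'} = \sum_{\tau \in \mathrm{Br}(\sigma')} p_\tau p_{\sigma'}$ on any harmonic $\Lie{K}$-space.

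Next, suppose $A: H \to H'$ is $\Lie{K}_{S'}$-harmonically finite, so $A = p_{F'} A p_F$ for some finite $F, F' \subseteq \Khat_{S'}$. Set $\tilde{F} \defeq \bigcup_{\sigma \in F} \mathrm{Br}(\sigma)$ and $\tilde{F}' \defeq \bigcup_{\sigma' \in F'} \mathrm{Br}(\sigma')$, both of which are finite subsets of $\Khat_S$. Using commutation of the projections, we get $p_{\tilde{F}} p_F = p_F$ and $p_{\tilde{F}'} p_{F'} = p_{F'}$, so that $A = p_{\tilde{F}'} A p_{\tilde{F}}$. This exhibits $A$ as a $\Lie{K}_S$-harmonically finite operator. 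Passing to operator-norm closures immediately gives $\scrK[S'](H,H') \subseteq \scrK[S](H,H')$, and since this holds uniformly in $H, H'$, the inclusion of $C^*$-categories follows.

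There is no real obstacle here: the content of the lemma is just the finiteness of branching from $\Lie{K}_{S'}$ to its closed subgroup $\Lie{K}_S$, combined with the commutativity of isotypical projections for nested compact subgroups. The only thing one has to be a little careful about is organising the two-sided containment in the matrix decomposition, which is handled by the simultaneous enlargement of $F$ to $\tilde{F}$ and of $F'$ to $\tilde{F}'$.
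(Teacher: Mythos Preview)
Your argument is correct: the inclusion $\Lie{K}_S \subseteq \Lie{K}_{S'}$ for $S\subseteq S'$, together with finite branching of irreducible $\Lie{K}_{S'}$-representations to the compact subgroup $\Lie{K}_S$, shows that every $\Lie{K}_{S'}$-harmonically finite operator is $\Lie{K}_S$-harmonically finite, and closure gives the result. The paper does not give its own proof here but simply cites \cite[Lemma~3.2]{Yuncken:PsiDOs}; your argument is the natural one and is essentially what is carried out in that reference.
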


The following two results are restatements of Lemmas 3.4 and 3.5 of \cite{Yuncken:PsiDOs}.

\begin{proposition}
\label{prop:KS_equivalent_conditions}
Let $K:H \to H'$ be a bounded linear operator between harmonic $\Lie{K}$-spaces.  The following are equivalent:
\begin{enumerate}
\item  $K \in \scrK[S]$,
\item  For any $\epsilon>0$, there is a finite set $\irrepset\subset\Khat_S$ of $\Lie{K}_S$-types such that $\|p_\irrepset^\perp K\|<\epsilon$ and $\|K p_\irrepset^\perp \| < \epsilon$.
\item  For any $\epsilon>0$, there is a finite set $\irrepset\subset\Khat_S$ of $\Lie{K}_S$-types such that $\|K - p_\irrepset K p_\irrepset \|<\epsilon$.
\end{enumerate}
\end{proposition}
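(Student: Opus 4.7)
The plan is to establish the equivalence via the cycle (i) $\Rightarrow$ (ii) $\Rightarrow$ (iii) $\Rightarrow$ (i), with every step resting on the standard block decomposition
\[
  K \;=\; p_\irrepset K p_\irrepset + p_\irrepset K p_\irrepset^\perp + p_\irrepset^\perp K p_\irrepset + p_\irrepset^\perp K p_\irrepset^\perp,
\]
together with the observation that $\Lie{K}_S$-harmonic finiteness of an operator $A$ is equivalent to the existence of a finite $\irrepset \subset \Khat_S$ with $p_\irrepset^\perp A = 0$ and $A p_\irrepset^\perp = 0$ (take $\irrepset$ to be the union of the rows and columns on which the matrix $(A_{\sigma'\sigma})$ is supported).

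For (i) $\Rightarrow$ (ii), I would use the definition of $\scrK[S]$ as the norm closure of the $\Lie{K}_S$-harmonically finite operators: given $\epsilon>0$, choose a harmonically finite $K'$ with $\|K-K'\|<\epsilon$, then pick $\irrepset$ as in the previous paragraph so that $p_\irrepset^\perp K' = 0$ and $K' p_\irrepset^\perp = 0$. Then $p_\irrepset^\perp K = p_\irrepset^\perp(K-K')$ has norm less than $\epsilon$, and similarly $\|K p_\irrepset^\perp\| < \epsilon$. For (ii) $\Rightarrow$ (iii), apply (ii) with the threshold $\epsilon/3$ to obtain $\irrepset$; the block decomposition displayed above gives
\[
  \|K - p_\irrepset K p_\irrepset\| \le \|p_\irrepset K p_\irrepset^\perp\| + \|p_\irrepset^\perp K p_\irrepset\| + \|p_\irrepset^\perp K p_\irrepset^\perp\| \le 3(\epsilon/3)=\epsilon,
\]
using $\|p_\irrepset^\perp K p_\irrepset^\perp\| \le \|p_\irrepset^\perp K\|$ and the trivial bounds for the other two terms.

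For (iii) $\Rightarrow$ (i), observe that $p_\irrepset K p_\irrepset$ has matrix decomposition supported on the finite set $\irrepset \times \irrepset$, so it is $\Lie{K}_S$-harmonically finite by definition. Taking $\irrepset_n$ with threshold $1/n$ produces a sequence of harmonically finite operators converging in norm to $K$, giving $K\in\scrK[S]$.

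I do not expect a genuine obstacle here; the argument is essentially the standard approximation characterization of compact operators transplanted to the $\Lie{K}_S$-isotypical grading. The only bookkeeping point worth flagging is that in (i) $\Rightarrow$ (ii) one must choose a single $\irrepset$ that simultaneously contains the row and column support of the approximant $K'$, and in (ii) $\Rightarrow$ (iii) one must remember the third cross-term $p_\irrepset^\perp K p_\irrepset^\perp$ when splitting the difference.
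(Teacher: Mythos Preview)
Your argument is correct and complete; the cycle (i) $\Rightarrow$ (ii) $\Rightarrow$ (iii) $\Rightarrow$ (i) goes through exactly as you describe. The paper itself does not prove this proposition but simply cites it as a restatement of Lemmas 3.4 and 3.5 of \cite{Yuncken:PsiDOs}, so there is no in-paper proof to compare against; your direct argument is the standard one and is precisely what one would expect to find in the cited reference.
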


If $A$ and $K$ are bounded linear operators, we say $K$ is {\em right-composable} for $A$ if the codomain of $K$ is the domain of $A$.  {\em Left-composability} is defined similarly.

\begin{proposition}
\label{prop:AS_equivalent_conditions}
Let $A:H \to H'$ be a bounded linear operator between harmonic $\Lie{K}$-spaces.  The following are equivalent:
\begin{enumerate}
\item  $A \in \scrA[S]$,
\item  For any $\sigma\in\Khat_S$, and any $\epsilon>0$, there is a finite set $\irrepset\subset\Khat_S$ of $\Lie{K}_S$-types such that $\|p_\irrepset^\perp A p_\sigma\|<\epsilon$ and $\| p_\sigma A p_\irrepset^\perp \| < \epsilon$.
\item For any $\sigma\in\Khat_S$, $Ap_\sigma$ and $p_\sigma A$ are in $\scrK[S]$.
\item  $A$ is a two-sided multiplier of $\scrK[S]$, meaning that $AK\in\scrK[S]$ for all right-composable $K\in\scrK[S]$, and $KA\in\scrK[S]$ for all left-composable $K\in \scrK[S]$.
\end{enumerate}
\end{proposition}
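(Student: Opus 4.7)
The plan is to prove the four conditions equivalent cyclically: (i)$\Rightarrow$(ii)$\Rightarrow$(iii)$\Rightarrow$(iv) together with the closing implication (iii)$\Rightarrow$(i). The first three implications are essentially formal consequences of Proposition~\ref{prop:KS_equivalent_conditions}, and the entire substance of the proposition lies in the last.

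For (i)$\Rightarrow$(ii): if $A_n$ is harmonically proper with $\|A-A_n\|<\epsilon/2$, then $A_n p_\sigma$ is supported in a finite set $F$ of rows, and $\|p_{F}^\perp A p_\sigma\| = \|p_{F}^\perp(A-A_n)p_\sigma\|<\epsilon/2$; the bound for $p_\sigma A p_{F}^\perp$ is analogous. For (ii)$\Rightarrow$(iii), to see $Ap_\sigma\in\scrK[S]$ via Proposition~\ref{prop:KS_equivalent_conditions}(ii), the first bound comes directly from (ii), while the second is trivial after enlarging the truncating set to contain $\sigma$, since then $Ap_\sigma p_F^\perp = 0$; the membership $p_\sigma A\in\scrK[S]$ is symmetric. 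For (iii)$\Rightarrow$(iv), given $K\in\scrK[S]$ approximated by a harmonically finite $p_F K p_F$, the product $A(p_F K p_F) = (Ap_F)(Kp_F)$ lies in $\scrK[S]$ because $Ap_F$ is a finite sum of elements of $\scrK[S]$ and $\scrK[S]$ is closed under composition with harmonically finite operators (the product of two harmonically finite operators is again harmonically finite). The converse (iv)$\Rightarrow$(iii) is immediate by applying the multiplier hypothesis to $p_\sigma\in\scrK[S]$.

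The main obstacle is the closing implication (iii)$\Rightarrow$(i), namely the construction of a harmonically proper norm-approximation to $A$. Since $\Khat_S$ is countable, enumerate it as $\{\sigma_1,\sigma_2,\ldots\}$. I would inductively build nested finite sets $F_0=\emptyset\subset F_1\subset F_2\subset\cdots$ with $\sigma_n\in F_n$ and $\bigcup_n F_n=\Khat_S$, subject to
\begin{equation*}
\|p_{F_n}^\perp A p_{F_{n-1}}\| < \epsilon/2^n
\quad\text{and}\quad
\|p_{F_{n-1}} A p_{F_n}^\perp\| < \epsilon/2^n.
\end{equation*}
Such $F_n$ can be chosen using Proposition~\ref{prop:KS_equivalent_conditions} applied to $Ap_{F_{n-1}},\,p_{F_{n-1}}A\in\scrK[S]$, noting that the ``opposite'' bounds become trivial once we arrange $F_n\supseteq F_{n-1}$. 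Setting $G_n = F_n\setminus F_{n-1}$, the block-tridiagonal truncation
\begin{equation*}
A_\epsilon \defeq \sum_{|m-n|\leq 1} p_{G_m} A p_{G_n}
\end{equation*}
is manifestly harmonically proper, since for each $\sigma\in G_n$ both its $\sigma$-column and its $\sigma$-row are supported in the finite set $G_{n-1}\cup G_n\cup G_{n+1}$.

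It remains to estimate $\|A-A_\epsilon\|$, which splits as an upper part $\sum_n p_{F_{n-2}}A p_{G_n}$ and a symmetric lower part $\sum_n p_{G_n}A p_{F_{n-2}}$. The geometric bounds $\|p_{F_{n-2}}Ap_{G_n}\|,\|p_{G_n}Ap_{F_{n-2}}\| \leq \epsilon/2^{n-1}$ follow from the inductive construction. For the lower part, the output projections $p_{G_n}$ are mutually orthogonal, so Pythagoras gives directly $\|\sum_n p_{G_n}Ap_{F_{n-2}}\|^2 \leq \sum_n (\epsilon/2^{n-1})^2 = O(\epsilon^2)$. For the upper part, the source projections $p_{G_n}$ are mutually orthogonal, and Cauchy--Schwarz applied to the decomposition $\xi=\sum_n p_{G_n}\xi$ yields the same $O(\epsilon)$ bound. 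Combining, $\|A-A_\epsilon\|=O(\epsilon)$ with a constant independent of $A$, so $A\in\scrA[S]$.
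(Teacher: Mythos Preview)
The paper does not actually prove this proposition; it merely cites Lemmas~3.4 and~3.5 of \cite{Yuncken:PsiDOs} as the source. So there is no in-text argument to compare against.

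Your proof is correct. The formal implications (i)$\Rightarrow$(ii)$\Rightarrow$(iii)$\Leftrightarrow$(iv) are handled cleanly, and the substantive step (iii)$\Rightarrow$(i) via a block-tridiagonal truncation is a standard and effective approach. The geometric decay $\|p_{F_{n-2}}Ap_{G_n}\|,\|p_{G_n}Ap_{F_{n-2}}\|<\epsilon/2^{n-1}$ follows from your inductive construction since $G_n\subseteq F_{n-1}^c$ and $F_{n-2}\subseteq F_{n-1}$, and the orthogonality of the $p_{G_n}$ on one side of each sum makes the Pythagoras/Cauchy--Schwarz estimates go through as you describe. One minor point worth making explicit: the infinite sum defining $A_\epsilon$ is \emph{a priori} only formal, but once you have bounded the upper and lower off-tridiagonal parts you may simply \emph{define} $A_\epsilon \defeq A - (\text{upper}) - (\text{lower})$ as a bounded operator and then verify that $p_\sigma A_\epsilon$ and $A_\epsilon p_\sigma$ are supported in $G_{n-1}\cup G_n\cup G_{n+1}$ for $\sigma\in G_n$.
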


We now describe some considerable simplifications from \cite{Yuncken:PsiDOs} in the case of homogeneous vector bundles for $\SU(3)$.

\begin{lemma}
\label{lem:degeneracy}
Let $E$, $E'$ be $\Lie{K}$-homogeneous vector bundles over $\scrX$, and put $H=\LXE$, $H'= \LXEprime$.  Then $\scrK[\Sigma](H,H') = \scrK(H,H')$ and $\scrA[\Sigma](H,H') = \scrK[\emptyset](H,H') = \scrA[\emptyset](H,H') = \scrL(H,H')$.

\end{lemma}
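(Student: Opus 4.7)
The plan is to exploit two structural features of the harmonic $\Lie{K}$-spaces $H=\LXE$ and $H'=\LXEprime$: finite multiplicity of each $\Lie{K}$-isotypical component (Example~\ref{ex:finite_multiplicities}), which handles the $\Sigma$-equalities; and the fact that in the harmonic $\Lie{K}$-space structure only finitely many $\Lie{M}$-types appear, namely the weights of the fibres of $E$ and $E'$, which handles the $\emptyset$-equalities. For $\scrK[\Sigma](H,H')=\scrK(H,H')$, finite multiplicity makes every $\Lie{K}$-harmonically finite operator factor through a finite-dimensional subspace and hence be of finite rank; conversely, Proposition~\ref{prop:KS_equivalent_conditions}(iii) lets one approximate any compact $K$ by $p_\irrepset K p_\irrepset$ as $\irrepset\subset\Khat$ exhausts the spectrum, because $p_\irrepset\to 1$ strongly and compactness promotes strong to norm convergence. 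The identification $\scrA[\Sigma](H,H')=\scrL(H,H')$ then follows immediately from Proposition~\ref{prop:AS_equivalent_conditions}(iv) and the ideal property of the compact operators.

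For the $\emptyset$-equalities, write $E=\bigoplus_i E_{\mu_i}$ and $E'=\bigoplus_j E_{\mu'_j}$ as the finite $\Lie{K}$-equivariant decompositions into homogeneous line bundles, one summand per weight of the fibre under $\Lie{M}$. In the harmonic $\Lie{K}$-space structure each $\LXE[\mu_i]$ is a single $\Lie{M}$-isotypical component, so only finitely many $\Lie{M}$-types appear in $H$ and $H'$. The matrix of any bounded operator $A:H\to H'$ in this $\Lie{M}$-isotypical decomposition therefore has only finitely many entries, making $A$ trivially $\Lie{M}$-harmonically finite. This gives $\scrL\subseteq\scrK[\emptyset]$, while the definitional chain $\scrK[\emptyset]\subseteq\scrA[\emptyset]\subseteq\scrL$ closes the loop.

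The main obstacle is conceptual: one must correctly interpret the ``$\Lie{M}$-isotypical decomposition'' of $H=\LXE$ invoked by Definition~\ref{def:A_K}. The reading dictated by the phrase ``weight space of a unitary $\Lie{K}$-representation'' in the definition of a harmonic $\Lie{K}$-space is that the weights index the line-bundle summands of $E$---the right-$\Lie{M}$-weight decomposition that cuts $\LXE[\mu]$ out of $L^2(\Lie{K})$---and not the (infinite) left-$\Lie{M}$-weight decomposition of $\LXE[\mu]$ as a $\Lie{K}$-representation. Once this interpretation is granted, the matrix-finiteness claimed above is immediate and the equalities collapse as stated.
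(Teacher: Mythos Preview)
Your proof is correct and follows essentially the same approach as the paper's. The only cosmetic difference is that the paper defers the equality $\scrK[\Sigma](H,H')=\scrK(H,H')$ to an external reference (Lemma~3.3 of \cite{Yuncken:PsiDOs}), whereas you spell out the finite-rank/strong-to-norm argument directly; and your final paragraph makes explicit the interpretation of the $\Lie{M}$-isotypical decomposition that the paper takes for granted in one sentence.
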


\begin{proof}
Since $H$ and $H'$ are direct sums of finitely many weight spaces for the right regular representation of $\Lie{K}$, any bounded operator from $H$ to $H'$ is $\Lie{M}$-harmonically finite.  Hence, $\scrK[\emptyset](H, H') = \scrA[\emptyset](H,H') = \scrL(H,H')$.

Lemma 3.3 of \cite{Yuncken:PsiDOs} shows that $\scrK[\Sigma](H,H')= \scrK(H,H')$.  By Proposition \ref{prop:AS_equivalent_conditions} above, any bounded operator $A:H\to H'$ is in $\scrA[\Sigma]$.
\end{proof}

The only nontrivial cases, then, are $\scrK[\{\alpha_i\}]$ and $\scrA[\{\alpha_i\}]$, which we abbreviate as $\scrK[\alpha_i]$ and $\scrA[\alpha_i]$.

\begin{definition}
\label{def:A}
As in \cite{Yuncken:PsiDOs}, we put $\scrA \defeq \cap_{S\subseteq\Sigma} \scrA[S]$, the simultaneous multiplier category of all $\scrK[S]$ ($S\subseteq\Sigma$). 
Note, though, that by Lemma \ref{lem:degeneracy} this reduces to $\scrA(H,H') = \scrA[\alpha_1](H,H') \cap \scrA[\alpha_2](H,H')$ when $H$, $H'$ are $L^2$-section spaces of homogeneous vector bundles.
\end{definition}

In the generality of \cite{Yuncken:PsiDOs}, it is necessary to adjust the operator spaces $\scrK[S]$ by defining $\scrJ[S] \defeq \scrK[S] \cap \scrA$.  The next lemma shows that this is not necessary for the current application.

\begin{lemma}
\label{lem:Ji_is_Ki}

With $H,H'$ as in Lemma \ref{lem:degeneracy}, $\scrK[\alpha_i](H,H') \subseteq \scrA(H,H')$, for $i=1,2$.
Thus, $\scrJ[\alpha_i](H,H') = \scrK[\alpha_i](H,H')$.

\end{lemma}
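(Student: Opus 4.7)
The plan is to show in turn that (a) $\scrK[\alpha_i] \subseteq \scrA[\alpha_i]$ is automatic, and (b) the cross-inclusion $\scrK[\alpha_1] \subseteq \scrA[\alpha_2]$ holds. Claim (a) is immediate: every $\Lie{K}_i$-harmonically finite operator is a fortiori $\Lie{K}_i$-harmonically proper, and both classes are preserved under norm closure. Since Lemma~\ref{lem:degeneracy} combined with Definition~\ref{def:A} reduces $\scrA(H,H')$ to $\scrA[\alpha_1](H,H') \cap \scrA[\alpha_2](H,H')$ in the present setting, and since the roles of $\alpha_1$ and $\alpha_2$ are interchanged by the Weyl symmetry of $\SU(3)$, the lemma will follow from claim (b).

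To verify (b) I will invoke Proposition~\ref{prop:AS_equivalent_conditions}(iii) and reduce to showing that $K p_\sigma$ and $p_\sigma K$ belong to $\scrK[\alpha_2]$ for every $K \in \scrK[\alpha_1]$ and every $\sigma \in \Khat_2$. The two cases are exchanged by taking adjoints (using $\scrK[\alpha_i]^* = \scrK[\alpha_i]$), so I focus on $K p_\sigma$. Given $\epsilon > 0$, Proposition~\ref{prop:KS_equivalent_conditions}(iii) yields a finite $F_1 \subseteq \Khat_1$ with $\|K - p_{F_1} K p_{F_1}\| < \epsilon/2$, reducing the task to exhibiting a finite $F_2 \subseteq \Khat_2$ containing $\sigma$ with $\|p_{F_2}^\perp \,p_{F_1} K p_{F_1} p_\sigma\| < \epsilon/2$; the companion bound $\|p_{F_1} K p_{F_1} p_\sigma\, p_{F_2}^\perp\|$ vanishes identically whenever $\sigma \in F_2$.

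The hard part will be this final norm estimate. The ingredients I plan to use are the Peter--Weyl decomposition of $H$ and $H'$ into finite-dimensional $\Lie{K}$-isotypical blocks (available by the finite-multiplicity structure of homogeneous line bundles), and the explicit branching rules $V^\pi|_{\Lie{K}_i}$ for $\pi \in \Khat$ under the Gelfand--Tsetlin patterns for $\SU(3)\to \mathrm{U}(2)$, together with the constraint imposed by the right-$\Lie{M}$-weight of each line-bundle summand $\LXE[\mu]$. The obstacle is that the product $p_{F_1} p_\sigma$ is not compact on $H$, so one cannot conclude via a naive finite-rank approximation; rather, one must analyse how the image of $p_{F_1} K p_{F_1} p_\sigma$ distributes across $\Khat_2$ and show that the tail in $\Lie{K}_2$-type decays uniformly across $\Lie{K}$-type. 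This is the genuine use of the $\SU(3)$-structure in the lemma, and is consistent with the remark in \cite{Yuncken:PsiDOs} that for general $\SL(n,\CC)$ one must replace $\scrK[\alpha_i]$ by the auxiliary $\scrJ[\alpha_i]$.
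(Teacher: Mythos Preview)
Your setup through claim (a) and the reduction to showing $\scrK[\alpha_1](H,H') \subseteq \scrA[\alpha_2](H,H')$ via Proposition~\ref{prop:AS_equivalent_conditions}(iii) is correct and matches the paper exactly. The gap is in the last two paragraphs, where you assert that ``the product $p_{F_1} p_\sigma$ is not compact on $H$'' and then propose an elaborate branching-rule analysis to work around this obstacle. This assertion is false in the present setting, and its falsity is precisely the content of the lemma you are missing.

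The paper invokes Lemma~5.4 of \cite{Yuncken:PsiDOs}, which says that on $L^2$-sections of a $\Lie{K}$-homogeneous vector bundle over $\scrX$, the product $p_{\sigma_1} p_{\sigma_2}$ \emph{is} compact for any $\sigma_1\in\Khat_1$ and $\sigma_2\in\Khat_2$. Granting this, if $K$ is $\Lie{K}_1$-harmonically finite, say $K = p_{F_1} K p_{F_1}$, then $K p_{\sigma_2} = p_{F_1} K\, (p_{F_1} p_{\sigma_2})$ is compact, hence in $\scrK[\alpha_2]$; likewise $p_{\sigma_2} K \in \scrK[\alpha_2]$. Proposition~\ref{prop:AS_equivalent_conditions} then gives $K\in\scrA[\alpha_2]$, and taking norm closures finishes the proof. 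No further $\SU(3)$ harmonic analysis is needed here; the required analysis is already encapsulated in that cited lemma (essentially the fact that fixing both an $\lie{s}_1$-type and an $\lie{s}_2$-type in a fixed weight space bounds the $\Lie{K}$-type). Your proposal, by contrast, never actually completes a proof: after the reduction you only describe intended ingredients and an obstacle, and since the obstacle is illusory, the remainder of the plan is both unnecessary and unexecuted.
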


\begin{proof}
Let $i=1$.  
It is immediate that $\scrK[\alpha_1](H,H')\subseteq \scrA[\alpha_1](H,H')$.  Lemma 5.4 of \cite{Yuncken:PsiDOs} implies that on $H$ and $H'$, $p_{\sigma_1} p_{\sigma_2}$ is compact for any $\sigma_1\in\Khat_1$ and $\sigma_2\in\Khat_2$.  Thus, if $K:H\to H'$ is $\Lie{K}_1$-harmonically finite, then $K p_{\sigma_2}\in\scrK(H,H') \subseteq \scrK[\alpha_2](H,H')$.  By Proposition \ref{prop:AS_equivalent_conditions}, $K\in\scrA[\alpha_2](H,H')$.  Taking the norm-closure, $\scrK[\alpha_1](H,H')\subseteq \scrA[\alpha_2](H,H')$, which proves the result.  The case $i=2$ is analogous.
\end{proof}

We therefore avoid the notation $\scrJ[\alpha_i]$ altogether.

\begin{theorem}[\thmcitemore{Yuncken:PsiDOs}{Theorem 1.11}]
\label{thm:lattice_of_ideals}
Let $E$ be a $\Lie{K}$-homogeneous vector bundle over $\scrX$, and $H\defeq\LXE$.  Then
\begin{enumerate}
\item $\scrK[\alpha_i](H)$ is an ideal in $\scrA(H)$, for $i=1,2$.
\item $\scrK[\alpha_1](H) \cap \scrK[\alpha_2](H) = \scrK(H)$.
\end{enumerate}

\end{theorem}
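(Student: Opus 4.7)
For part (i), the plan is to combine Lemma \ref{lem:Ji_is_Ki} with the multiplier characterization of $\scrA[\alpha_i]$ given in Proposition \ref{prop:AS_equivalent_conditions}(iv). Specifically, Lemma \ref{lem:Ji_is_Ki} already gives $\scrK[\alpha_i](H)\subseteq \scrA(H)$, so $\scrK[\alpha_i](H)$ is at least a $C^*$-subalgebra. For the ideal property, I would observe that by Definition \ref{def:A} we have $\scrA(H)\subseteq \scrA[\alpha_i](H)$, and that by Proposition \ref{prop:AS_equivalent_conditions}(iv) every element of $\scrA[\alpha_i](H)$ is a two-sided multiplier of $\scrK[\alpha_i](H)$. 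Thus $\scrA(H)\cdot\scrK[\alpha_i](H)\subseteq\scrK[\alpha_i](H)$ and vice versa, giving the ideal statement.

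For part (ii), the easy inclusion $\scrK(H)\subseteq\scrK[\alpha_1](H)\cap\scrK[\alpha_2](H)$ follows from Lemma \ref{lem:degeneracy}, which identifies $\scrK(H)=\scrK[\Sigma](H)$, together with the monotonicity $\scrK[\Sigma]\subseteq\scrK[\alpha_i]$ (Lemma 3.2 of \cite{Yuncken:PsiDOs}). So the task is the reverse inclusion: given $T\in\scrK[\alpha_1](H)\cap\scrK[\alpha_2](H)$, I want to exhibit $T$ as a norm limit of compact operators.

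Fix $\epsilon>0$. Using $T\in\scrK[\alpha_1](H)$ and Proposition \ref{prop:KS_equivalent_conditions}(iii), I would first choose a finite set $F_1\subset\Khat_1$ with $\|T-p_{F_1}Tp_{F_1}\|<\epsilon/2$. The crucial step is to show that $p_{F_1}Tp_{F_1}$ still lies in $\scrK[\alpha_2](H)$: indeed, $p_{F_1}$ is $\Lie{K}_1$-harmonically finite, so $p_{F_1}\in\scrK[\alpha_1](H)\subseteq\scrA(H)\subseteq\scrA[\alpha_2](H)$ by Lemma \ref{lem:Ji_is_Ki} and Definition \ref{def:A}, and therefore $p_{F_1}$ multiplies $\scrK[\alpha_2](H)$ into itself by Proposition \ref{prop:AS_equivalent_conditions}(iv). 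Applying Proposition \ref{prop:KS_equivalent_conditions}(iii) a second time, now with $\scrK[\alpha_2]$, yields a finite $F_2\subset\Khat_2$ with
\[
  \|p_{F_1}Tp_{F_1}-p_{F_2}p_{F_1}Tp_{F_1}p_{F_2}\|<\epsilon/2.
\]

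The final ingredient is that $p_{F_1}p_{F_2}$ is compact: by Lemma 5.4 of \cite{Yuncken:PsiDOs} each $p_{\sigma_1}p_{\sigma_2}$ with $\sigma_i\in\Khat_i$ is compact on $H$, and $p_{F_1}p_{F_2}$ is a finite sum of such products. Hence $p_{F_2}p_{F_1}Tp_{F_1}p_{F_2}$ factors through the compact operator $p_{F_1}p_{F_2}$ and is itself compact, while the triangle inequality gives $\|T-p_{F_2}p_{F_1}Tp_{F_1}p_{F_2}\|<\epsilon$. Since $\epsilon$ is arbitrary and $\scrK(H)$ is norm-closed, $T\in\scrK(H)$. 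The main conceptual hurdle, which is what makes the argument go through, is the first compression step: one must know that truncating by the $\Lie{K}_1$-spectral projection $p_{F_1}$ does not destroy membership in $\scrK[\alpha_2](H)$, and this is exactly what the multiplier characterization (i) supplies. The remainder is bookkeeping plus the harmonic input from \cite{Yuncken:PsiDOs} that $\Lie{K}_1$- and $\Lie{K}_2$-isotypical projections have compact product.
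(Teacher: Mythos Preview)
Your argument is correct. Note, however, that the paper does not give its own proof of this theorem: it is quoted verbatim as Theorem 1.11 of \cite{Yuncken:PsiDOs}, so there is nothing in the present paper to compare your approach against directly.

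That said, what you have done is worth remarking on. You have reassembled a proof entirely from the statements already imported into this paper (Propositions \ref{prop:KS_equivalent_conditions} and \ref{prop:AS_equivalent_conditions}, Lemma \ref{lem:degeneracy}, Lemma \ref{lem:Ji_is_Ki}, and the compactness of $p_{\sigma_1}p_{\sigma_2}$ from \cite[Lemma 5.4]{Yuncken:PsiDOs}). Part (i) is a formal consequence of the multiplier characterization, as you say. For part (ii), your key observation---that the $\Lie{K}_1$-truncation $p_{F_1}$ lies in $\scrA[\alpha_2]$ via Lemma \ref{lem:Ji_is_Ki}, so compressing by it preserves membership in $\scrK[\alpha_2]$---is exactly the right idea, and the double-truncation argument then reduces everything to the compactness of $p_{F_1}p_{F_2}$. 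This is essentially how the result is proved in \cite{Yuncken:PsiDOs} as well, though there it is formulated in the slightly more general lattice-of-ideals framework; your version is tailored to the $\SU(3)$ case where only two simple roots are in play, which is all that is needed here.
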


\begin{lemma}[\thmcitemore{Yuncken:PsiDOs}{Lemma 8.1}]
\label{lem:mult_ops_in_A}
  Let $\mu$, $\nu$ be weights.  For any $f\in\CXE[\mu-\nu]$, the multiplication operator $\multop{f}:\LXE[\nu] \to \LXE[\mu]$ is in $\scrA$.
\end{lemma}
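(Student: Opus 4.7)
The plan is to reduce the claim to the case of $\Lie{K}$-finite sections $f$, for which the desired property can be verified directly from Clebsch--Gordan decomposition.

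First, observe that the map $f \mapsto \multop{f}$ from $\CXE[\mu-\nu]$ to the bounded operators $\LXE[\nu]\to\LXE[\mu]$ is norm-continuous, since in the compact picture it is just multiplication by a continuous function on $\Lie{K}$. Since $\scrA$ is norm-closed, it therefore suffices to prove the statement for $f$ in a dense subspace of $\CXE[\mu-\nu]$. I would take this dense subspace to be the space of $\Lie{K}$-finite sections for the right regular representation, whose density in the sup-norm follows from the Peter--Weyl theorem applied to the $\Lie{M}$-equivariance picture \eref{eq:M-equivariance}.

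Now fix such a $\Lie{K}$-finite $f$, and fix $i\in\{1,2\}$. To show $\multop{f}\in\scrA[\alpha_i]$, by Proposition \ref{prop:AS_equivalent_conditions} it is enough to check that for every $\sigma\in\Khat_i$, the operators $\multop{f}p_\sigma$ and $p_\sigma \multop{f}$ are $\Lie{K}_i$-harmonically finite, and hence automatically in $\scrK[\alpha_i]$. Here is the key observation: $f$ lies in a finite-dimensional subspace $V^\tau\subseteq\CXE[\mu-\nu]$ stable under the right translation action of $\Lie{K}$, and a direct computation (using $R(k')\multop{f}R(k')^{-1}=\multop{R(k')f}$) shows that for any $s\in p_\sigma\LXE[\nu]$ the product $fs$ transforms under $R|_{\Lie{K}_i}$ inside the tensor product $\tau|_{\Lie{K}_i}\otimes \sigma$. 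Since both factors are finite dimensional, Clebsch--Gordan shows that only finitely many $\Lie{K}_i$-types $\sigma'\in\Khat_i$ appear in $fs$. Thus $\multop{f}p_\sigma = p_{\irrepset}\multop{f}p_\sigma$ for a finite set $\irrepset\subset\Khat_i$, giving $\Lie{K}_i$-harmonic finiteness. The argument for $p_\sigma \multop{f}$ is the mirror image, with $\tau^\dual$ in place of $\tau$ (or may be obtained by taking adjoints and using $(\multop{f})^*=\multop{\bar f}$).

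Combining these, $\multop{f}\in\scrA[\alpha_1]\cap\scrA[\alpha_2]=\scrA$ by Definition \ref{def:A} and Lemma \ref{lem:degeneracy}. The density argument then closes the lemma. The only part requiring care is verifying that the tensor-product bound on $\Lie{K}_i$-types is really uniform as $\sigma$ varies only inside a single column/row of the matrix decomposition; this is automatic because the set $\irrepset$ depends on $\sigma$ but we only need finiteness for each fixed $\sigma$. The main conceptual obstacle is organizing the Peter--Weyl density statement for sections of $E_{\mu-\nu}$ in the correct topology (sup norm rather than $L^2$), but this is standard once one passes to the compact picture.
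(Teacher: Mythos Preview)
The paper does not prove this lemma; it simply cites \cite[Lemma 8.1]{Yuncken:PsiDOs}. Your argument is correct and is the standard one: reduce by density to $\Lie{K}$-finite $f$, then use the Clebsch--Gordan rule to see that multiplication by such $f$ is $\Lie{K}_i$-harmonically proper for each $i$. In fact you have shown slightly more than you needed: for $\Lie{K}$-finite $f$ the operator $\multop{f}$ is itself $\Lie{K}_i$-harmonically proper, not merely in $\scrA[\alpha_i]$.

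One small slip to fix: you write that $f$ lies in a finite-dimensional subspace $V^\tau\subseteq\CXE[\mu-\nu]$ stable under the right translation action of $\Lie{K}$. But $\CXE[\mu-\nu]$ is a single weight space for $R|_{\Lie{M}}$ and is \emph{not} invariant under $R(\Lie{K})$. What you want is $V^\tau\subseteq C(\Lie{K})$: a $\Lie{K}$-finite function on $\Lie{K}$ lies in a finite-dimensional $R$-stable subspace of $C(\Lie{K})$, and your identity $R(k')\multop{f}R(k')^{-1}=\multop{R(k')f}$ and the subsequent tensor-product argument go through verbatim in that ambient space. The conclusion about $\Lie{K}_i$-types then restricts back to the weight spaces $\LXE[\nu]$ and $\LXE[\mu]$ because the $\Lie{K}_i$-isotypical projections commute with the $\Lie{M}$-weight projections (as the paper notes just before Definition~\ref{def:A_K}).
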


\begin{remark}
\label{rmk:mult_ops_in_compact_picture}
Lemma \ref{lem:mult_ops_in_A} depends on $\Lie{K}$-equivariant structure only, so that $f$ may be (the restriction to $\Lie{K}$ of) a section of $L_{(\mu-\nu)\oplus\chiA}$ for any $\chiA\in\lie{m}_\CC^\dual$.
\end{remark}

%------------------------------------------------------------------

\subsection{Principal series representations}
\label{sec:principal_series}

The purpose of this section is to prove the following important fact, the first of two rather technical harmonic analysis results.

\begin{proposition}
\label{prop:U(g)_in_A}
Let $\mu\in\Weights$.  For any $g\in \Lie{G}$, $U_\mu(g) \in\scrA(\LXE[\mu])$.
\end{proposition}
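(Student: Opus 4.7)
My plan is to use the Iwasawa decomposition $\Lie{G}=\Lie{K}\Lie{A}\Lie{N}$. Since $\scrA$ is a $C^*$-category (hence closed under composition) and $U_\mu$ is a group homomorphism, it suffices to treat $U_\mu(k)$ for $k\in\Lie{K}$ and $U_\mu(b)$ for $b\in\Lie{A}\Lie{N}$ separately.

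For $b=an\in\Lie{A}\Lie{N}$, the compact-picture formula reads
\[
  (U_\mu(b)\sect)(k) = e^{-\rho(H(b^{-1}k))}\,\sect(\kappa(b^{-1}k)),
\]
where $x=\kappa(x)a(x)n(x)$ is the Iwasawa decomposition and $H(x)=\log a(x)$. Hence $U_\mu(b)=M_{\phi_b}\cdot T_b$, with $\phi_b(k) \defeq e^{-\rho(H(b^{-1}k))}$ a smooth function on $\scrX$ and $T_b$ the unitary pullback by the diffeomorphism $k\Lie{M}\mapsto \kappa(b^{-1}k)\Lie{M}$ of $\scrX$. The multiplication factor $M_{\phi_b}$ lies in $\scrA$ by Lemma~\ref{lem:mult_ops_in_A} and Remark~\ref{rmk:mult_ops_in_compact_picture}. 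For the pullback $T_b$, the structural input is that the $\Lie{G}$-action on $\scrX$ is fiber-preserving with respect to each $\fibration[i]:\scrX\to\scrX[i]=\Lie{G}/\Lie{P}_i$, and one should exploit this compatibility to place $T_b$ in $\scrA[\alpha_i]$ via the longitudinal structure of $\scrK[\alpha_i]$.

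For $k_0\in\Lie{K}$, $U_\mu(k_0)$ is left translation in the compact picture. Via the Peter-Weyl decomposition $\LXE[\mu]\cong\bigoplus_{\pi\in\Khat} V^{\pi^\dual}\otimes p_{-\mu}V^\pi$, the operator is block-diagonal, $U_\mu(k_0)=\bigoplus_\pi \pi^\dual(k_0)\otimes\mathrm{id}$, while the left $\Lie{K}_i$-isotypical projection $p_\sigma$ similarly splits as $\bigoplus_\pi p_\sigma^{(\pi)}\otimes\mathrm{id}$, with $p_\sigma^{(\pi)}$ the $\sigma$-isotypical projection on $V^{\pi^\dual}$ for $\pi^\dual|_{\Lie{K}_i}$. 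By Proposition~\ref{prop:AS_equivalent_conditions}(iii) and Proposition~\ref{prop:KS_equivalent_conditions}(ii), the claim $U_\mu(k_0)\in\scrA[\alpha_i]$ reduces to producing, for each $\sigma\in\Khat_i$ and each $\epsilon>0$, a finite set $F\subset\Khat_i$ with $\|p_F^\perp U_\mu(k_0)p_\sigma\|<\epsilon$. This is a uniform (in $\pi$) estimate on the matrix coefficients of $\pi^\dual(k_0)$ between $\Lie{K}_i$-isotypical subspaces of $V^{\pi^\dual}$, which I would attack using the Gelfand-Tsetlin parametrization of the $\SU(3)\to U(2)$ branching (with multiplicities all at most one) together with the explicit form of matrix coefficients of $\pi^\dual(k_0)$.

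The main obstacle is the $\Lie{K}$-part. Unlike $U_\mu(k)$ for $k\in\Lie{K}_i$, which preserves the $\Lie{K}_i$-isotypical decomposition and so lies trivially in $\scrA[\alpha_i]$, general $U_\mu(k_0)$ spreads each $\sigma$-isotypical subspace across infinitely many $\Lie{K}_i$-types as $\pi$ varies, so no purely combinatorial argument from the branching suffices; one needs genuine decay estimates on the $\SU(3)$-matrix coefficients. Establishing these estimates uniformly in $\pi$, by exploiting the rank-two geometry of $\SU(3)$ and the Gelfand-Tsetlin combinatorics, is where the harmonic-analytic heart of the proof will lie.
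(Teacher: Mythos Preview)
You have misidentified which $\Lie{K}$-representation defines the harmonic structure on $\LXE[\mu]$.  The $\Lie{K}_i$-isotypical projections $p_\sigma$ in Definition~\ref{def:A_K} are taken with respect to the representation that makes $\LXE[\mu]=p_{-\mu}L^2(\Lie{K})$ a harmonic $\Lie{K}$-space, namely the \emph{right} regular representation.  The principal series action $U_\mu$ is by left translation, so it commutes with every right $\Lie{K}_i$-isotypical projection.  Thus $U_\mu(k_0)$ is diagonal for the $\Lie{K}_i$-type decomposition, and $U_\mu(k_0)\in\scrA[\alpha_i]$ trivially for all $k_0\in\Lie{K}$.  Your ``main obstacle'' disappears entirely; there are no $\SU(3)$ matrix-coefficient estimates to prove here.

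Conversely, the part you treat as routine is where the genuine work lies.  Your factorisation $U_\mu(b)=M_{\phi_b}\,T_b$ is correct, and $M_{\phi_b}\in\scrA$ by Lemma~\ref{lem:mult_ops_in_A}, but you give no argument that $T_b\in\scrA[\alpha_j]$ for $j\neq i$.  The observation that the $\Lie{G}$-action on $\scrX$ preserves the fibration $\fibration[i]$ tells you only that $T_b$ interacts well with the \emph{longitudinal} structure of $\foliation[i]$; it says nothing about how $T_b$ spreads $\Lie{K}_j$-types for the other simple root, which is precisely the content of $\scrA[\alpha_j]$.  The paper instead uses the $\Lie{KAK}$ decomposition to reduce to $a\in\Lie{A}$, computes the infinitesimal action $U_\mu(A)$ on matrix units (Lemma~\ref{lem:a-action}), and shows it is tridiagonal in $\Lie{K}_i$-types with off-diagonal entries growing at most linearly in the highest weight (Lemma~\ref{lem:tridiagonal}).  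That growth bound is then fed into a commutator estimate against a carefully chosen operator $\Phi$ that is scalar on each $\Lie{K}_i$-type, yielding the required decay $\|P_m^\perp U_\mu(a)p_l\|<\epsilon$ via a derivative-along-the-flow argument.  This infinitesimal tridiagonality is the key structural fact you are missing.
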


We will use the notation for the elements of $\lie{k}_\CC$ from Section \ref{sec:Lie_groups}, noting that the elements $X_\alpha$, $Y_\alpha$ ($\alpha\in\Roots^+$) and $H_i$, $H_i'$ (for either $i=1$ or $2$) form a basis for $\lie{g}$.  We let $X_\alpha^\dual, Y_\alpha^\dual, H_i^\dual, {H_i'}^\dual$ denote the dual basis elements of $\lie{g}^\dual$.  We also recall the notation $\matrixunit{\eta^\dual}{\xi}$ for matrix units.

\begin{lemma}
\label{lem:a-action}
Let $A\in\lie{a}$.  Let $\pi\in\Khat$ and $\eta^\dual \in V^{\pi\dual}$, $\xi \in (V^\pi)_{-\mu}$.  Then $U_{\mu}(A)\matrixunit{\eta^\dual}{\xi} = \matrixunit{\eta^\dual\otimes A}{\Xi(\xi)}$, where
$$
  \Xi(\xi) \defeq \rho(H_i)\xi\otimes H_i^\dual + \rho(H_i')\xi \otimes {H_i'}^\dual 
    + \sum_{\alpha\in\Delta} \sign(\alpha) \pi(X_{\alpha})\xi\otimes X_\alpha^\dual 
    \quad \in V^\pi \otimes \lie{g}^\dual.
$$
\end{lemma}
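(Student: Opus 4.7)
The plan is to compute $U_\mu(A)\matrixunit{\eta^\dual}{\xi}$ directly in the non-compact picture of $E_\mu = L_{\mu\oplus\rho}$, using the Iwasawa decomposition $\Lie{G} = \Lie{K}\Lie{A}\Lie{N}$. For $A \in \lie{a}$ and $k \in \Lie{K}$, write $\exp(-tA) k = \kappa(t) a(t) n(t)$ with $\kappa(t) \in \Lie{K}$, $a(t) = \exp H(t) \in \Lie{A}$, $n(t) \in \Lie{N}$. Differentiating and applying the equivariance \eref{eq:B-equivariance}, one obtains the pointwise formula
\[ (U_\mu(A) s)(k) = -\rho(H'(0)) s(k) + (R(\dot\kappa_0) s)(k), \]
where $R$ is the right regular action of $\lie{k}$ and $\dot\kappa_0 \defeq k^{-1} \kappa'(0) \in \lie{k}$. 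Differentiating the Iwasawa equation itself gives $-\Ad(k^{-1})A = \dot\kappa_0 + H'(0) + n'(0)$ in $\lie{k} \oplus \lie{a} \oplus \lie{n}$.

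Specializing to $s = \matrixunit{\eta^\dual}{\xi}$, one has $R(\dot\kappa_0)\matrixunit{\eta^\dual}{\xi}(k) = \matrixunit{\eta^\dual}{\pi(\dot\kappa_0)\xi}(k)$. I would then exploit the holomorphic extension $\tilde\pi: \Lie{G} \to \GL(V^\pi)$---available because $\Lie{G}$ is the complexification of $\Lie{K}$, so that $\tilde\pi: \lie{g} \to \End(V^\pi)$ is $\CC$-linear---to rewrite $\tilde\pi(\dot\kappa_0) = -\tilde\pi(W) - \tilde\pi(H'(0)) - \tilde\pi(n'(0))$, where $W \defeq \Ad(k^{-1}) A$. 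The weight-vector property gives $\tilde\pi(H'(0))\xi = -\mu(H'(0))\xi$. The crucial structural observation is that, because $A$ is Hermitian and $k$ unitary, $W$ is Hermitian, hence its $\lie{h}$-component lies entirely in $\lie{a}$; moreover, the contribution of the $\lie{n}^-$-part of $W$ to the Iwasawa decomposition is tracked via the splitting $Y_\alpha = (Y_\alpha - X_\alpha) + X_\alpha$ into $\lie{k} \oplus \lie{n}^+$. Writing $W = W_\lie{h} + \sum_{\alpha \in \Delta^+} W_{X_\alpha} X_\alpha + \sum_{\alpha \in \Delta^+} W_{Y_\alpha} Y_\alpha$ in the root basis and substituting, the $\tilde\pi(W_{\lie{n}^+})\xi$ contributions cancel between the $\tilde\pi(W)$-term and the $\tilde\pi(n'(0))$-term, leaving a combination of $\rho(W_\lie{h})\matrixunit{\eta^\dual}{\xi}(k)$ with terms $\matrixunit{\eta^\dual}{\tilde\pi(X_\alpha)\xi}(k)$ and $\matrixunit{\eta^\dual}{\tilde\pi(Y_\alpha)\xi}(k)$, weighted by coordinates of $W$.

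The final step is to recognise the resulting expression as the pairing $\matrixunit{\eta^\dual \otimes A}{\Xi(\xi)}$: the diagonal piece $\rho(H_i)\xi\otimes H_i^\dual + \rho(H_i')\xi\otimes{H_i'}^\dual$ accounts for the $\rho(W_\lie{h})$ term, and $\sum_\alpha \sign(\alpha) \pi(X_\alpha)\xi\otimes X_\alpha^\dual$ accounts for the root-vector contributions via $X_{-\alpha} = Y_\alpha$. The main obstacle I anticipate is bookkeeping: reconciling the Iwasawa decomposition $\lie{g} = \lie{k}\oplus\lie{a}\oplus\lie{n}^+$ with the triangular decomposition $\lie{g} = \lie{h}\oplus\lie{n}^+\oplus\lie{n}^-$, and getting the $\sign(\alpha)$ factors together with the $\Ad$-convention implicit in the matrix-coefficient pairing $\matrixunit{\eta^\dual \otimes A}{\,\cdot\,}$ to align precisely with the stated form of $\Xi(\xi)$.
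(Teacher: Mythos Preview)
Your approach is essentially the paper's: differentiate the Iwasawa decomposition of $\exp(-tA)k$ and exploit that $W=\Ad(k^{-1})A$ is self-adjoint. The paper avoids the holomorphic-extension detour by writing down the projections $D\kappa_e(P)=-P_++P_-$ and $D\Lie{a}_e(P)=P_0$ for self-adjoint $P$ directly (Equations \eqref{eq:k-derivative}--\eqref{eq:a-derivative}) and substituting; this makes the bookkeeping you worry about disappear, and in particular no $\mu(H'(0))$ term ever enters (the $\lie h$-parts of $-\tilde\pi(W)$ and $-\tilde\pi(H'(0))$ cancel exactly, while the $\lie n^+$-parts do \emph{not} cancel but combine to $+\tilde\pi(W_+)$, giving the $\sign(\alpha)$ pattern).
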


Note that $\matrixunit{\eta^\dual\otimes A}{\Xi(\xi)}$ is a matrix unit for the \emph{non-irreducible} representation $\pi\otimes\Ad^\dual$, hence a sum of matrix units for the irreducible components of $\pi\otimes\Ad^\dual$.

\begin{proof}
Define functions $\kappa$, $\Lie{a}$, $\Lie{n}$ on $\Lie{G}$ using the Iwasawa decomposition:
$$
  g =: \kappa(g) \Lie{a}(g) \Lie{n}(g) \in \Lie{KAN}, \qquad \text{for $g\in\Lie{G}$}.
$$
The derivatives $\Derivative \kappa_e$, $\Derivative \Lie{a}_e$ and $\Derivative \Lie{n}_e$ at the identity are the ($\RR$-linear) projections of $\lie{g}$  onto the components of the decomposition $\lie{g}=\lie{k\oplus a\oplus n}$.  If $P\in\lie{g}$, let us write $P= P_+ +  P_0 +  P_-$ where $P_+$, $P_0$, $P_-$ are strictly upper-triangular, diagonal, and strictly lower-triangular, respectively.  If $P$ is self-adjoint, the $\lie{k\oplus a\oplus n}$ decomposition of $P$ is $P=(-P_+ + P_-) \oplus P_0 \oplus 2P_+$.  Thus,
\begin{eqnarray}
\label{eq:k-derivative}
  \Derivative \kappa_e (P) &=& \left(-\sum_{\alpha\in\Delta} \sign(\alpha) X_\alpha\otimes X_\alpha^\dual \right) P, \\
\label{eq:a-derivative}
  \Derivative \Lie{a}_e (P) &=& \big( H_i \otimes H_i^\dual + H_i'\otimes {H_i'}^\dual \big) P.
\end{eqnarray}

For $a\in\Lie{A}$, $k\in\Lie{K}$, 
$$
  a^{-1}k = kk^{-1}a^{-1}k = k\,\kappa(k^{-1}a^{-1}k) \Lie{a}(k^{-1}a^{-1}k) \Lie{n}(k^{-1}a^{-1}k).
$$
In order to describe the $\Lie{G}$-action on a $\Lie{K}$-matrix unit, one must extend $\matrixunit{\eta^\dual}{\xi}$ to a $\Lie{B}$-equivariant function on $\Lie{G}$.  Equation \eref{eq:B-equivariance}) gives
\begin{eqnarray}
  U_{\mu}(a)\matrixunit{\eta^\dual}{\xi}(k)
    &\defeq& \matrixunit{\eta^\dual}{\xi}(a^{-1} k) \nonumber\\
    &=& e^{\rho}(\Lie{a}(k^{-1}ak)) \matrixunit{\eta^\dual}{\xi}(k\, \kappa(k^{-1}a^{-1}k)) \nonumber\\
    &=& e^{\rho}(\Lie{a}(k^{-1}ak)) \big( \eta^\dual, \pi(k) \pi(\kappa(k^{-1}a^{-1}k)) \xi\big).
    \label{eq:extension_to_G}
\end{eqnarray}
Let $a=\exp(tA)$, and take the derivative with respect to $t$ at $t=0$:
\begin{equation*}
\label{eq:UA1}
  U_{\mu}(A)\matrixunit{\eta^\dual}{\xi}(k)
    = \rho(\Derivative \Lie{a}_e(\Ad k^{-1}(A))) \big( \eta^\dual, \pi(k) \xi\big)
      - \big( \eta^\dual, \pi(k) \pi(\Derivative \kappa_e(\Ad k^{-1}(A)) \xi\big).
\end{equation*}
Since $\Ad k^{-1} (A)$ is self-adjoint, Equations \eref{eq:k-derivative} and \eref{eq:a-derivative} give
\begin{eqnarray*}
  \lefteqn{U_{\mu}(A)\matrixunit{\eta^\dual}{\xi}(k) } \quad \\
    &=& \rho(H_i) \big(H_i^\dual, \Ad k^{-1}(A)\big) \big( \eta^\dual, \pi(k) \xi\big)
      + \rho(H_i') \big({H_i'}^\dual, \Ad k^{-1}(A)\big) \big( \eta^\dual, \pi(k) \xi\big) \\
    &&\qquad +\sum_{\alpha\in\Delta} \sign(\alpha) \big( \eta^\dual, \pi(k) \pi(X_\alpha) \big(X_\alpha^\dual, \Ad k^{-1}(A)\big) \xi\big) \\
    &=& \big( A, \Ad^\dual k (H_i^\dual) \big) \big( \eta^\dual, \pi(k) \rho(H_i)\xi\big)
      + \big( A, \Ad^\dual k ({H_i'}^\dual) \big) \big( \eta^\dual, \pi(k) \rho(H_i')\xi\big) \\
    && \qquad  +\sum_{\alpha\in\Delta} \sign(\alpha) \big(A,  \Ad^\dual k (X_\alpha^\dual)\big)\big( \eta^\dual, \pi(k) \pi(X_\alpha) \xi\big) \\
    &=& \matrixunit{\eta^\dual\otimes A}{\Xi(\xi)}(k).
\end{eqnarray*}

\end{proof}

Recall the decomposition $(\lie{k}_i)_\CC = \lie{s}_i \oplus \lie{z}_i$ of Section \ref{sec:parabolic_subgroups}.  Let $\mu\in\Weights$.  Since $\lie{z}_i\subseteq\lie{h}$, the action of $\lie{z}_i$ on the $(-\mu)$-weight space of any $\Lie{K}$-representation is completely determined by $\mu$.  Thus, the $\Lie{K}_i$-isotypical subspaces of $\LXE[\mu] $ are the $\lie{s}_i$-isotypical subspaces.  Moreover, since $\LXE[\mu]$ has $\lie{s}_i$-weight $-\mu_i \defeq -\mu(H_i)$, the $\Lie{s}_i$-types which occur must have highest weights $|\mu_i|, |\mu_i|+2, \ldots$

In what follows, we fix $i=1$ or $2$ and let $\sigma_l$ denote the $\lie{s}_i$-type with highest weight $l\in\NN$.  We abbreviate $p_l \defeq p_{\sigma_l}$.  Note that $p_l=0$ on $\LXE[\mu]$ if $l \not\equiv \mu_i \pmod{2}$ or $l < |\mu_i|$.
The next lemma shows that $U_\mu(A)$ is tridiagonal with respect to $\Lie{K}_i$-types, and that the off-diagonal entries have at most linear growth.

\begin{lemma}
\label{lem:tridiagonal}
Fix $\mu\in\Weights$ and let $A\in \lie{a}$.  There exists a constant $C>0$ such that for any $m,l \in \NN$,
\begin{equation*}
  \begin{array}{rcll}
  \| p_m U_\mu(A) p_l \|  &=&    0 &\text{if $|m-l|>2$}, \\
  \| p_m U_\mu(A) p_l \|  &\leq& C(l+1) \qquad &\text{if $|m-l|=2$}.
  \end{array}
\end{equation*}

\end{lemma}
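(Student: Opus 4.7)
The plan is to apply Lemma \ref{lem:a-action} and to analyse the $\lie{s}_i$-isotype decomposition of $\Xi(\xi)$ in $V^\pi\otimes\lie{g}^\dual$ under the representation $\pi\otimes\Ad^\dual$. For $\xi\in p_l V^\pi\cap p_{-\mu}V^\pi$ and $\eta^\dual\in V^{\pi^\dual}$, Lemma \ref{lem:a-action} gives $U_\mu(A)\matrixunit{\eta^\dual}{\xi}=\matrixunit{\eta^\dual\otimes A}{\Xi(\xi)}$, and since the right-regular $\lie{s}_i$-action on a matrix unit passes through its second argument, the $\lie{s}_i$-isotype decomposition of the output coincides with that of $\Xi(\xi)$.

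To establish the tridiagonal structure, I would decompose $\lie{g}$ as an $\lie{s}_i$-module under the adjoint action: $\lie{g}\cong V^2\oplus V^1\oplus V^1\oplus V^0$, where the $V^2$ summand is $\lie{s}_i$ (spanned by $X_{\pm\alpha_i},H_i$), the $V^0$ summand is $\lie{z}_i=\vspan{H_i'}$, and the two $V^1$ summands are spanned by the remaining root vectors $X_\alpha,Y_\alpha$ with $\alpha\notin\{\pm\alpha_i\}$. Clebsch--Gordan for $\slx(2)$ gives $\sigma_l\otimes V^2=\sigma_{l+2}\oplus\sigma_l\oplus\sigma_{l-2}$, $\sigma_l\otimes V^1=\sigma_{l+1}\oplus\sigma_{l-1}$, and $\sigma_l\otimes V^0=\sigma_l$. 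Since $\lie{s}_i$ preserves its own isotypes, $\pi(X_{\pm\alpha_i})\xi\in V^\pi_{\sigma_l}$; and since $X_\alpha$ for $\alpha\notin\pm\alpha_i$ is a component of a $V^1$-type $\lie{s}_i$-tensor operator, Wigner--Eckart places $\pi(X_\alpha)\xi\in V^\pi_{\sigma_{l-1}}\oplus V^\pi_{\sigma_{l+1}}$. So a priori the $\lie{s}_i$-types appearing in $\Xi(\xi)$ lie in $\{\sigma_{l-2},\sigma_{l-1},\sigma_l,\sigma_{l+1},\sigma_{l+2}\}$. However every summand of $\Xi(\xi)$ has total $H_i$-weight $-\mu_i$, so only types $\sigma_j$ with $j\equiv\mu_i\equiv l\pmod{2}$ survive, eliminating $\sigma_{l\pm 1}$. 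This proves $p_m U_\mu(A)p_l=0$ for $|m-l|>2$.

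For the norm bound in the case $|m-l|=2$, the $\sigma_{l\pm 2}$-component of $\Xi(\xi)$ is the sum of (i) contributions from the $V^2$-terms $\rho(H_i)\xi\otimes H_i^\dual$ and $\pi(X_{\pm\alpha_i})\xi\otimes X_{\pm\alpha_i}^\dual$ projected via Clebsch--Gordan $\sigma_l\otimes V^2\to\sigma_{l\pm 2}$, and (ii) contributions from the $V^1$-terms $\pi(X_\alpha)\xi\otimes X_\alpha^\dual$ with $\alpha\notin\pm\alpha_i$ projected via $\sigma_{l\pm 1}\otimes V^1\to\sigma_{l\pm 2}$. The Clebsch--Gordan coefficients are uniformly bounded in $l$. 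Equations \eqref{eq:X-formula}--\eqref{eq:Y-formula} give $\|\pi(X_{\pm\alpha_i})\xi\|\leq\tfrac{1}{2}\sqrt{(l+|\mu_i|+2)(l-|\mu_i|+2)}\,\|\xi\|=O(l+1)\|\xi\|$, which is exactly the claimed growth. For the $V^1$-terms I would write $\xi$ as a scalar multiple of $\pi(Y_{\alpha_i})^{(l+\mu_i)/2}\xi_{\mathrm{high}}$ for a highest-weight vector $\xi_{\mathrm{high}}\in V^\pi_{\sigma_l}$; the commutator identities $[Y_{\alpha_1},X_{\alpha_2}]=0=[Y_{\alpha_2},X_{\alpha_1}]$ (and their analogues involving $X_\rho,Y_\rho$) allow one to move the $\lie{s}_i$-lowering operator past $\pi(X_\alpha)$ and to reduce the estimate to one about $\pi(X_\alpha)\xi_{\mathrm{high}}$. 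Its $\sigma_{l\pm 1}$-components are then expressed in terms of $\lie{s}_i$-actions on highest-weight vectors via \eqref{eq:X-formula}--\eqref{eq:Y-formula}, and combined with the Peter--Weyl normalisation of matrix units this yields an $O(l+1)\|\xi\|$ bound uniformly in $\pi$. Summing over $\alpha$ and taking the supremum over $\pi,\xi,\eta^\dual$ gives $\|p_m U_\mu(A)p_l\|\leq C(l+1)$ for a constant depending only on $\mu$ and $A$.

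The principal obstacle is the uniform-in-$\pi$ estimate for the $V^1$-type contributions: a naive Casimir bound $\|\pi(X_\alpha)\xi\|\leq\sqrt{c_\pi}\|\xi\|$ depends on the Casimir eigenvalue of $\pi$ and is therefore too weak. The commutator identities within $\slx(3,\CC)$ are what produce the cancellation that renders the final bound independent of $\pi$.
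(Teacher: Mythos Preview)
Your tridiagonality argument (the case $|m-l|>2$) via Clebsch--Gordan and the parity constraint is correct and agrees with the paper. The gap is in your norm estimate, specifically in the treatment of the $V^1$-terms.

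You correctly note that an individual term $\pi(X_\alpha)\xi\otimes X_\alpha^\dual$ with $\alpha\notin\{\pm\alpha_i\}$ can land in the $\sigma_{l\pm2}$-isotype of $V^\pi\otimes\lie{g}^\dual$, and you rightly worry that $\|\pi(X_\alpha)\xi\|$ is not bounded uniformly in $\pi$. Your proposed remedy --- rewriting $\xi$ via a highest-weight vector and invoking commutator identities to produce ``cancellation'' --- is not carried out and is not obviously sufficient: even after commuting lowering operators past $\pi(X_\alpha)$ you are still left with quantities like $\|\pi(X_\alpha)\xi_{\mathrm{high}}\|$ that depend on $\pi$. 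You identify this as the principal obstacle, but the proposal does not resolve it.

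The paper's key observation, which bypasses this entirely, is that the \emph{sum} of the $V^1$-terms contributes \emph{nothing} to the off-diagonal blocks. Concretely (taking $i=1$): the element $X_{\alpha_2}\otimes X_{\alpha_2}^\dual + X_\rho\otimes X_\rho^\dual\in\lie{g}\otimes\lie{g}^\dual$ is the identity map on the two-dimensional $\lie{s}_1$-subrepresentation $\vspan{X_{\alpha_2},X_\rho}$, hence is $\lie{s}_1$-invariant. Since the map $V^\pi\otimes\lie{g}\otimes\lie{g}^\dual\to V^\pi\otimes\lie{g}^\dual$ sending $\zeta\otimes Z\otimes Z^\dual\mapsto\pi(Z)\zeta\otimes Z^\dual$ is an $\lie{s}_1$-intertwiner, the image $\pi(X_{\alpha_2})\xi\otimes X_{\alpha_2}^\dual+\pi(X_\rho)\xi\otimes X_\rho^\dual$ has the same $\lie{s}_1$-type as $\xi$, namely $\sigma_l$. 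The $Y$-terms are handled identically, and the $H_i'$-term is trivially $\sigma_l$-type. Hence the entire off-diagonal content of $\Xi(\xi)$ comes from the $V^2$-piece
\[
\Xi_1(\xi)=\rho(H_i)\xi\otimes H_i^\dual+\pi(X_i)\xi\otimes X_i^\dual-\pi(Y_i)\xi\otimes Y_i^\dual,
\]
whose norm is bounded by $C_0(l+1)\|\xi\|$ directly from \eqref{eq:X-formula}--\eqref{eq:Y-formula}, with no dependence on $\pi$ whatsoever. The passage from this vector bound to an operator-norm bound on matrix units then requires only a dimension comparison between $\pi$ and the irreducible constituents of $\pi\otimes\Ad^\dual$. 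So the ``cancellation'' you are looking for is in fact exact, and it is most cleanly seen via the invariance of the identity tensor rather than via commutator manipulations.
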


\begin{proof}
Let us take $i=1$, with the case of $i=2$ being entirely analogous.
Suppose $\matrixunit{\eta^\dual}{\xi} \in p_l \LXE[\mu]$, which is to say that $\eta^\dual\in V^{\pi\dual}$, $\xi \in (V^\pi)_{\sigma_l}$ for some $\pi\in\Khat$.  By Lemma \ref{lem:a-action}, we need to understand the decomposition of $\Xi(\xi)$ into $\lie{s}_1$-types.

The adjoint representation of $\lie{g}$ decomposes into the $\lie{s}_1$-representations
$$
   \vspan{X_1, H_1, Y_1}, \quad \vspan{H_1'}, \quad \vspan{X_2, X_3}, \quad \vspan{Y_2, Y_3},
$$
and $\lie{g}^\dual$ decomposes dually.  We break up the expression for $\Xi(\xi)$ into corresponding parts.

Firstly, $H_1'$ has trivial $\lie{s}_1$-type, so $\rho(H_1')\xi\otimes{H_1'}^\dual$ has $\lie{s}_i$-type $l$.  Next, note that the vector $X_2 \otimes X_2^\dual + X_3 \otimes X_3^\dual \in \lie{g}\otimes\lie{g}^\dual$ also has trivial $\lie{s}_1$-type, since it corresponds to the identity map on the subrepresentation $\vspan{X_2, X_3}$.  
%Thus, $\xi\otimes (X_2 \otimes X_2^\dual + X_3 \otimes X_3^\dual) \in V^\pi \otimes \lie{g}\otimes \lie{g}^\dual$ has $\lie{s}_1$-type $l$.  
The map
\begin{eqnarray*}
  V^\pi \otimes \lie{g}\otimes \lie{g}^\dual &\to& V^\pi \otimes \lie{g}^\dual \\
  \zeta \otimes Z \otimes Z^\dual &\mapsto& \pi(Z)\zeta \otimes Z^\dual
\end{eqnarray*}
is a morphism of $\Lie{K}$-representations, in particular of $\Lie{s}_i$-representations, so $\pi(X_2)\xi \otimes X_2^\dual + \pi(X_3) \xi \otimes X_3^\dual$ also has $\lie{s}_1$-type $l$.  Similarly, $-\pi(Y_2)\xi \otimes Y_2^\dual - \pi(Y_3) \xi \otimes Y_3^\dual$ has $\lie{s}_1$-type $l$.

Thus, all the off-diagonal components of $U_\mu(A)$ are due to the components
\begin{equation}
\label{eq:off-diagonal_terms}
  \Xi_1(\xi) \defeq \rho(H_1)\xi \otimes H_1^\dual + \pi(X_1)\xi \otimes X_1^\dual - \pi(Y_1) \xi \otimes Y_1^\dual
\end{equation}
of $\Xi(\xi)$.  The coadjoint representation of $\lie{s}_1$ on $\vspan{X_1^\dual, H_1^\dual, Y_1^\dual}$ has highest weight $2$, so the fusion rules for $\SU(2)$-representations imply that \eref{eq:off-diagonal_terms} contains $\lie{s}_i$-types $l-2, l, l+2$ only.  

It remains to prove the norm estimate on the off-diagonal terms.  By Equations \eref{eq:X-formula}--\eref{eq:Y-formula},
\begin{equation*}
\begin{array}{rclcl}
  \|\rho(H_1) \xi\| &=& 2\|\xi\| &\leq& (l+1)\|\xi\|, \\ \\
  \|\pi(X_1) \xi \| &=& \frac{1}{2} \sqrt{(l-\mu_i)(l+\mu_i+2)} \|\xi\| &\leq& (l+1)\|\xi\|, \\ \\
  \|\pi(Y_1) \xi \| &=& \frac{1}{2} \sqrt{(l-\mu_i+2)(l+\mu_i)} \|\xi\| &\leq& (l+1)\|\xi\|,
\end{array}
\end{equation*}
so the norm of $\Xi_1(\xi)$ is bounded by $C_0(l+1) \|\xi\|$ for some constant $C_0$.
We need to convert this into a bound on the norm of the matrix units.  

Decompose $\pi\otimes \Ad^\dual$ into irreducible $\Lie{K}$-subrepresentations.  Suppose $\pi'$ is an irreducible subrepresentation of $\pi\otimes\Ad^\dual$.  By orthogonality of characters, $\pi$ is a subrepresentation of $\pi'\otimes \Ad$.  Therefore $\dim \pi \leq \dim (\pi'\otimes\Ad) = 8\dim \pi' $, so that $\dim \pi' \geq \frac{1}{8} \dim \pi$.  This also shows that the number of irreducible components of $\pi\otimes\Ad^\dual$ is at most $64$.  

For each irreducible subrepresentation $\pi'$ of $\pi\otimes\Ad^\dual$, let  $y_{\pi'}^\dual$ denote the ${\pi'}^\dual$-component of $\eta^\dual\otimes A$,  and $x_{\pi'}$ the $\pi'$-component of $\Xi_1(\xi)$.
We get
\begin{eqnarray*}
  \| p_{l\pm2} U_\mu(A) p_l \matrixunit{\eta^\dual}{\xi} \|^2
    & \leq & \| \matrixunit{\eta\otimes A}{\Xi_1(\xi)} \|^2 \\
    & = & \sum_{\pi'} \frac{1}{\dim \pi'} \|y_{\pi'}^\dual\|^2   \|x_{\pi'}\|^2 \\
    &\leq& \sum_{\pi'} \frac{1}{\dim \pi'} \| \eta^\dual \otimes A \|^2 \| \Xi_1(\xi) \|^2 \\
    &\leq& \sum_{\pi'} \frac{1}{\dim \pi'} \|\eta^\dual\|^2 \|A\|^2 C_0^2(l+1)^2 \|\xi\|^2 \\
    &\leq& \|A\|^2 C_0^2 (l+1)^2   \sum_{\pi'} \frac{8}{\dim \pi} \| \eta^\dual\|^2 \| \xi \|^2 \\
    &\leq& 8.64.\|A\|^2 C_0^2 (l+1)^2 \| \matrixunit{\eta^\dual}{\xi} \|^2.
\end{eqnarray*}
Putting $C=  \sqrt{512}\, \|A\| \,C_0$ gives the result.

\end{proof}

\begin{proof}[Proof of Proposition \ref{prop:U(g)_in_A}]
We need to show $U_\mu(g)\in\scrA[\alpha_i]$ for $i=1,2$.
For $k\in\Lie{K}$, the left translation action $U_\mu(k)$ commutes with the decomposition into right $\Lie{K}_i$-types, so that $U_\mu(k) \in \scrA[\alpha_i]$ trivially.  By the $\Lie{KAK}$-decomposition, it suffices to prove the proposition for $g=a\in\Lie{A}$.

We continue with the notation of the previous lemma.  Put $P_m\defeq \sum_{j=0}^m p_j$.  We will show that for any $l\in\NN$ and any $\epsilon>0$, there exists $m\in\NN$ such that
$\| P_m^\perp U_\mu(a) p_l\| <\epsilon$ and $\|  p_l U_\mu(a) P_m^\perp\| <\epsilon$, from which Lemma \ref{prop:AS_equivalent_conditions} gives $U_\mu(a)\in\scrA[\alpha_i]$.

Let $A\in\lie{a}$ such that $e^A = a$.   Define $\phi:\NN\to [0,1]$ by
$$
  \phi(n) \defeq \begin{cases}
    1, & n\leq l, \\
    \max\,\{0, 1-\frac{\epsilon^2}{4C} \log(n+3) \}, & n>l,
  \end{cases}
$$
where $C$ is the constant of the previous lemma.  Define  $\Phi\defeq \sum_{n\in\NN} \phi(n) p_n$, an operator on $\LXE[\mu]$ which is scalar on each $\Lie{K}_i$-type.

We now decompose $U_\mu(A)$ into its diagonal and off-diagonal components.  For convenience of notation, we put $U\defeq U_\mu(A)$, then write $U=U_-+U_0+U_+$, where
$$
  U_- = \sum_{n=2}^\infty p_{n-2}U p_n, \qquad
  U_0 = \sum_{n=0}^\infty p_n U p_n, \qquad
  U_+ = \sum_{n=0}^\infty p_{n+2} U p_n.
$$
The diagonal component $U_0$ commutes with $\Phi$.  On the other hand,
\begin{eqnarray*}
  \| [p_{n-2} U p_n, \Phi] \| 
    &=& \| (\phi(n) - \phi(n-2))\, p_{n-2} U p_n \| \\
    &\leq& \frac{\epsilon^2}{4C}(\log(n+3)-\log(n+1)) \\
    &\leq& \frac{\epsilon^2}{2C} \frac{1}{(n+1)} \\
    &\leq& \frac{\epsilon^2}{2},
\end{eqnarray*}
by Lemma \ref{lem:tridiagonal}.  Thus,
$$
  \| [U_-, \Phi] \| = \sup_{n\in\NN} \| [U_{n-2,n}, \Phi] \|  \leq \frac{1}{2} \epsilon^2.
$$
Similarly, $\| [U_+, \Phi] \| \leq \frac{1}{2} \epsilon^2$.  Therefore, $\| [U_\mu(A), \Phi] \| \leq \epsilon^2$.

Let $s \in p_l \LXE[\mu]$ have norm one.  Put $s_t \defeq U_\mu(e^{tA})s$ for $0\leq t \leq 1$.  Then
$$
  | \frac{d}{dt} \ip{ \Phi s_t, s_t } |
    = | \ip{ \Phi U_\mu(A) s_t, s_t } + \ip{ \Phi s_t, U_\mu(A) s_t } |
    = | \ip{ [\Phi, U_\mu(A)] s_t, s_t } |
    \leq \epsilon^2,
$$
for all $t$.  Therefore,
\begin{eqnarray*}
  |\ip{\Phi s_1, s_1}| &=& \left| \ip{\Phi s_0, s_0} + \int_{t'=0}^1 \frac{d}{dt} \ip{\Phi s_t, s_t} \, dt'  \right|\\
    &\geq& 1 - \epsilon^2. 
\end{eqnarray*}
Let $m$ be the smallest integer for which $\phi(m)=0$.   Put $v \defeq P_m s_1$ and $w\defeq P_m^\perp s_1$.   Then $\|v\|^2 + \|w\|^2 =1$, but also
$$
  \| v\|^2  > \ip{\Phi v,v} =  \ip{\Phi v,v} + \ip{\Phi w,w} = \ip{\Phi s_1, s_1} \geq 1 -\epsilon^2.
$$
It follows that $ \|w\| < \epsilon$, {\em ie}, $\|P_m^\perp U_\mu(a) s \| < \epsilon$.  Since $s\in p_l\LXE[\mu]$ was arbitrary, $\|P_m^\perp U_\mu(a) p_l\| < \epsilon$.  

Replacing $a$ with $a^{-1}$, there exists $m'\in\NN$ such that $\| P_{m'}^\perp U_\mu(a^{-1}) p_l \| < \epsilon$.  Thus, after enlarging $m$ to be at least $m'$, we have
$$
  \| p_l U_\mu(a) P_{m}^\perp \| = \| P_{m}^\perp U_\mu(a^{-1}) p_l \| < \epsilon.
$$

\end{proof}

In fact, Proposition \ref{prop:U(g)_in_A} holds for any generalized principal series representation.  Although we don't actually need this here, it is now trivial to prove.

\begin{corollary}
For any $\Lie{G}$-homogeneous line bundle $\LXL[\chi]$ over $\scrX$, the translation operators $\sect \mapsto g\cdot \sect$ belong to $\scrA$.
\end{corollary}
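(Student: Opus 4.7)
The plan is to reduce to Proposition~\ref{prop:U(g)_in_A} by writing the translation action on $\LXL[\chi]$ as a multiplication operator composed with the unitarily-induced translation $U_{\chiM}(g)$. The key observation is that, as noted after Equation~\eqref{eq:M-equivariance}, the $\Lie{K}$-homogeneous bundles $\LXL[\chi]$ and $\LXE[\chiM]$ coincide (depending only on $\chiM$), so both $\Lie{G}$-translation operators act on a common underlying Hilbert space in the compact picture and can be compared directly there.

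Using the Iwasawa decomposition $g^{-1}k = \kappa(g^{-1}k)\, a(g^{-1}k) \, n(g^{-1}k)$ together with the $\Lie{B}$-equivariance of Equation~\eqref{eq:B-equivariance}, a direct computation gives, for $\sect$ in the compact picture and $k\in\Lie{K}$,
$$
  (g \cdot \sect)(k) = e^{-\chiA(H(g^{-1}k))}\, \sect(\kappa(g^{-1}k)),
$$
where $H(g^{-1}k) \defeq \log a(g^{-1}k) \in \lie{a}$. The analogous formula for $\LXE[\chiM]$ has $\rho$ in place of $\chiA$, so taking the quotient yields
$$
  g\cdot \sect = \multop{\phi_g}\, U_{\chiM}(g) \sect, \qquad \phi_g(k) \defeq e^{(\rho - \chiA)(H(g^{-1}k))}.
$$
A short check, using that $\Lie{M} \subset \Lie{H}$ commutes with $\Lie{A}$ and normalises $\Lie{N}$, gives $H(g^{-1}km) = H(g^{-1}k)$ for every $m \in \Lie{M}$, so $\phi_g$ descends to a continuous function on $\scrX = \Lie{K}/\Lie{M}$.

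To finish, I would apply Proposition~\ref{prop:U(g)_in_A} to place $U_{\chiM}(g) \in \scrA(\LXE[\chiM])$, and Lemma~\ref{lem:mult_ops_in_A} together with Remark~\ref{rmk:mult_ops_in_compact_picture} to place $\multop{\phi_g} \in \scrA(\LXE[\chiM])$. Since $\scrA$ is a $C^*$-category and hence closed under composition, the product lies in $\scrA$, which is the desired conclusion once we re-interpret it on $\LXL[\chi]$. The only mild verifications required are the Iwasawa-based formula above and the $\Lie{M}$-invariance of $\phi_g$; there is no substantial obstacle, as all the harmonic-analytic work has already been done in Proposition~\ref{prop:U(g)_in_A}.
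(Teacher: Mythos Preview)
Your proof is correct and follows essentially the same route as the paper: factor the translation on $\LXL[\chi]$ as a continuous multiplication operator times $U_{\chiM}(g)$, then invoke Proposition~\ref{prop:U(g)_in_A} and Lemma~\ref{lem:mult_ops_in_A}. The only cosmetic difference is that the paper first rewrites $g^{-1}k = k\,(k^{-1}g^{-1}k)$ before applying the Iwasawa decomposition, whereas you decompose $g^{-1}k$ directly; both lead to the same factorization and the same $\Lie{M}$-invariance check.
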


\begin{proof}
Let $\chi=\chiM\oplus\chiA$.  A computation of the form of Eq.~\eref{eq:extension_to_G}
gives 
$$
  g\cdot\sect(k) = e^{\chiA}(\Lie{a}(k^{-1}gk)) \sect (k\, \kappa(k^{-1}g^{-1}k)),
$$
for any $k\in\Lie{K}$, while
$$
  U_{\chiM}(g)\sect(k) = e^{\rho}(\Lie{a}(k^{-1}gk)) \sect (k\, \kappa(k^{-1}g^{-1}k)).
$$
Note that $\Lie{a}(m^{-1}gm) = \Lie{a}(g)$ for any $m\in\Lie{M}$, $g\in\Lie{G}$.
Therefore, $g\cdot\sect = \multop{f} U_{\chiM}(g) \sect$, where $f(k) \defeq e^{\chiA-\rho}(\Lie{a}(k^{-1}gk))$ is in $C(\Lie{K/M}) = C(\scrX)$.  Since $\multop{f}$ and $U_{\chiM}(g)$ are in $\scrA$, we are done.

\end{proof}

%-----------------------------------------------------------------------
\subsection{Longitudinal pseudodifferential operators}
\label{sec:PsiDOs}

Let $X\in\lie{k}_\CC$ be a root vector, of weight $\alpha$.  Via the right regular representation, $X$ defines a left $\Lie{K}$-invariant differential operator on $C^\infty(\Lie{K})$.  
For each weight $\mu$, $X$ maps $p_{-\mu} L^2(\Lie{K})$ to $p_{-\mu+\alpha} L^2(\Lie{K})$, so it defines a $\Lie{K}$-invariant differential operator
$$
  X : \LXE[\mu] \to \LXE[\mu-\alpha].
$$
The principal symbol of this differential operator is a $\Lie{K}$-equivariant linear map from the cotangent bundle $\Tangent^*\scrX \cong K\times_M (\lie{k/m})^*$ to $\End( E_\mu, E_{\mu-\alpha}) \cong E_{-\alpha}$.  (Here $(\lie{k/m})^*$ denotes the {\em real} dual of $\lie{k/m}$.)  By equivariance, this map is determined by its value on the cotangent fibre at the identity coset $\coset{e}\in\scrX$, which is
\begin{eqnarray}
\label{eq:diff_op_symbol}
  \Symbol (X) :  \Tangent^*_{\coset{e}}\scrX = (\lie{k/m})^*  
    & \to & \CC \\
  \xi & \mapsto & \xi(X). \nonumber
\end{eqnarray}
%Here, $\CC_{-\alpha}$ denotes $\CC$ equipped with the character $e^{-\alpha}$ of $\Lie{M}$.  

%More generally, if $T\in\Univ{\lie{k}_\CC}$ is a weight vector of weight $\lambda$ for the adjoint representation, then there is a corresponding $\Lie{K}$-invariant differential operator $T:\LXE[\mu] \to \LXE[\mu-\lambda]$.

If $X\in(\lie{k}_i)_\CC$ ($i=1$ or $2$), then the differential operator $X:\CinftyXE[\mu] \to \CinftyXE[\mu-\alpha]$ is tangential to the foliation $\foliation[i]$ of Section \ref{sec:parabolic_subgroups}.  We will refer to such an operator as an $\foliation[i]$-longitudinal differential operator.  Its {longitudinal principal symbol} is the $\Lie{K}$-equivariant map $\Symbol[i]: \foliation[i]^* \to E_{-\alpha}$ which, at the identity coset, is given by
\begin{eqnarray*}
  \Symbol[i] :  (\foliation[i]^*)_{\coset{e}}  = (\lie{k}_i / \lie{m})^* &\to& \CC \\
  \xi &\mapsto & \xi(X).
\end{eqnarray*}

An $\foliation[i]$-longitudinal differential operator is \emph{longitudinally elliptic} if its longitudinal principal symbol is invertible off the zero section of $\Tangent^*\foliation[i]$.  Note that $X_i = -\half(X_i' + \sqrt{-1} \, X_i'') \in (\lie{k}_i)_\CC$ where
$$
  X_i' = \smatrix{ 0 & -1 \\ 1 & 0 }, \quad X_i'' = \smatrix{ 0 & \sqrt{-1} \\ \sqrt{-1} & 0 }
$$
span $\lie{k}_i/\lie{m}$, so that $X_i$ is $\foliation[i]$-longitudinally elliptic.  Similarly, $Y_i$ is $\foliation[i]$-longitudinally elliptic.  Moreover, $X_i$ and $Y_i$ are formal adjoints.  We shall use $X_i$, $Y_i$ also to denote their closures as unbounded operators on the $L^2$-section spaces.

Fix $\mu\in\Weights$.  Let $E \defeq E_\mu \oplus E_{\mu-\alpha_i}$, and define  $D_i \defeq \smatrix{0&Y_i\\X_i&0}$ on $\LXE$.  The $\lie{s}_i$-isotypical subspaces of $\LXE$ are eigenspaces for $D_i$, and by the representation theory of $\lie{s}_i$---specifically Equations \eref{eq:X-formula} and \eref{eq:Y-formula}---its spectrum is discrete.

\medskip

For the definition and basic properties of longitudinal pseudodifferential operators, we refer the reader to \cite{MS-GAFS}\footnote{In this reference, they are called {tangential} pseudodifferential operators.}.  If $E$, $E'$ are vector bundles over $\scrX$, we denote the set of $\foliation[i]$-longitudinal pseudodifferential operators of order at most $p$ by $\PsiDO[i]^p(E,E')$.  If $E=E'$, we abbreviate this to $\PsiDO[i]^p(E)$.

Let $C(\cosphere\foliation[i]; \End(E))$ denote the algebra of continuous sections of the pullback of $\End(E)$ to the cosphere bundle of the foliation $\foliation[i]$.  The longitudinal principal symbol map $\Symbol[i]:\PsiDO[i]^0(E) \to C(\cosphere\foliation[i]; \End(E))$ extends to the operator-norm closure $\overline{\PsiDO[i]^{0}}(E)$, and we have Connes' short exact sequence,
\begin{equation}
\label{eq:symbol-sequence}
  \xymatrix{
    0 \ar[r] &
    \overline{\PsiDO[i]^{-1}}(E) \ar[r] &
    \overline{\PsiDO[i]^{0}}(E) \ar[r]^-{\Symbol[i]} &
    C(\cosphere\foliation[i]; \End(E)) \ar[r] &
    0.
  }
\end{equation}

For any closed, densely defined, unbounded operator $T$ between Hilbert spaces, we let $\Phase T$ denote the phase in the polar decomposition: $T = (\Phase{T}) |T|$.  We also use $\Phase{z}$ to denote the phase of a complex number $z\in\CC^\times$.

\begin{lemma}
\label{lem:F_in_PsiDO}
For any weight $\mu$, $\Phase{X_i}:\LXE[\mu] \to \LXE[\mu-\alpha_i]$ and $\Phase{Y_i}:\LXE[\mu-\alpha_i] \to \LXE[\mu]$ are $\foliation[i]$-longitudinal pseudodifferential operators.  Their longitudinal principal symbols at the identity coset are
\begin{eqnarray*}
  \Symbol[i] (\Phase{X_i}) (\xi) &=& \Phase{(\xi(X_i))}, \\
  \Symbol[i] (\Phase{Y_i}) (\xi) &=& \Phase{(\xi(Y_i))} \;=\; \overline{\Phase{(\xi(X_i))}}.
\end{eqnarray*}
for $ \xi$ in the unit sphere of $(\lie{k}_i/\lie{m})^* \cong (\foliation[i]^*)_{\coset{e}}$.
\end{lemma}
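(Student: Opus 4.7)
The plan is to realize $\Phase X_i$ and $\Phase Y_i$ as the off-diagonal entries of the bounded transform of the self-adjoint longitudinally elliptic operator $D_i = \smatrix{0 & Y_i \\ X_i & 0}$ on $E \defeq E_\mu \oplus E_{\mu-\alpha_i}$ introduced just above the lemma, and then invoke Connes' longitudinal pseudodifferential calculus. The crucial observation is that $D_i^2 = \mathrm{diag}(Y_iX_i,\, X_iY_i)$ is a self-adjoint $\foliation[i]$-longitudinally elliptic differential operator of order two, whose principal symbol equals $|\xi(X_i)|^2 \cdot I$. (The identity $\xi(Y_i) = \overline{\xi(X_i)}$ used here follows from the formulas $X_i = -\tfrac12(X_i' + \sqrt{-1}\,X_i'')$ and $Y_i = -\tfrac12(X_i' - \sqrt{-1}\,X_i'')$, together with the fact that $\xi$ is real-valued on $\lie{k}_i/\lie{m}$.)

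Applying the longitudinal pseudodifferential calculus to $D_i^2$, the resolvent $(1+D_i^2)^{-1/2}$ is an $\foliation[i]$-longitudinal PsiDO of order $-1$, whence $F \defeq D_i(1+D_i^2)^{-1/2}$ is a longitudinal PsiDO of order zero in $\overline{\PsiDO[i]^{0}}(E)$. Its longitudinal principal symbol at a unit covector $\xi \in (\lie{k}_i/\lie{m})^* \cong (\foliation[i]^*)_{\coset{e}}$ is computed by symbolic calculus as
$$
\Symbol[i](F)(\xi) \;=\; \frac{\Symbol[i](D_i)(\xi)}{\bigl|\Symbol[i](D_i)(\xi)\bigr|} \;=\; \smatrix{0 & \Phase(\xi(Y_i)) \\ \Phase(\xi(X_i)) & 0}.
$$
Reading off the two off-diagonal entries gives precisely the symbol formulas asserted in the lemma, provided one can identify $\Phase D_i$ with $F$ modulo $\overline{\PsiDO[i]^{-1}}(E)$.

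This last identification is the main technical step. By spectral calculus $\Phase D_i - F = h(D_i)$, where $h(\lambda) = \sign(\lambda) - \lambda(1+\lambda^2)^{-1/2}$ vanishes at $0$ and satisfies $h(\lambda) = O(|\lambda|^{-2})$ as $|\lambda|\to\infty$. I would then represent $h(D_i)$ via a Helffer--Sjöstrand type contour integral as an absolutely convergent integral in the resolvents $(z-D_i)^{-1}$, each of which is an order $-1$ longitudinal PsiDO, and conclude $h(D_i) \in \overline{\PsiDO[i]^{-1}}(E)$. The obstacle is that $\ker D_i$ is typically infinite-dimensional—an entire $\lie{s}_i$-isotypical component of $\LXE[\mu]$ (or $\LXE[\mu-\alpha_i]$) can lie in $\ker X_i$ (or $\ker Y_i$) by the representation theory of $\slx(2,\CC)$—so it cannot be discarded as a finite-rank perturbation, and care is needed to translate the $O(|\lambda|^{-2})$ decay of $h$ faithfully into membership in the correct ideal of the longitudinal calculus.
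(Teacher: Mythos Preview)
Your approach starts along the right lines---working with $D_i = \smatrix{0 & Y_i \\ X_i & 0}$ and functional calculus---but the obstacle you flag at the end is genuine and you do not resolve it. The function $h(\lambda) = \sign(\lambda) - \lambda(1+\lambda^2)^{-1/2}$ is not smooth at $\lambda = 0$, so a Helffer--Sj\"ostrand representation does not apply to it directly, and the infinite-dimensional kernel of $D_i$ prevents you from discarding the singularity as a compact correction.

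The missing ingredient is the spectral gap of $D_i$, established in the paragraph immediately preceding the lemma: the $\lie{s}_i$-isotypical subspaces of $\LXE$ are eigenspaces for $D_i$, and the formulas \eref{eq:X-formula}--\eref{eq:Y-formula} show that $\Spec(D_i)$ is discrete, so there exists $\epsilon > 0$ with $\Spec(D_i) \cap (-\epsilon,\epsilon) = \{0\}$. The paper exploits this directly and bypasses your difficulty entirely: one chooses a \emph{smooth} function $f:\RR\to[-1,1]$ with $f(0)=0$ and $f(x)=\sign(x)$ for all $|x|\geq\epsilon$. Then $f(D_i) = \Phase D_i$ on the nose, since $f$ and $\sign$ agree on $\Spec(D_i)$, and a fibrewise application of Taylor's theorem on smooth functional calculus for elliptic self-adjoint operators yields $f(D_i)\in\PsiDO[i]^0(\scrX;E)$ with leading symbol $f(\Symbol[i] D_i)$. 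Your detour through $F = D_i(1+D_i^2)^{-1/2}$ and the correction $h(D_i)$ becomes unnecessary once the gap is in hand; equivalently, the gap lets you replace your non-smooth $h$ by a smooth function agreeing with it on the spectrum. Note also that even if completed, your route would only place $\Phase D_i$ in the closure $\overline{\PsiDO[i]^0}(E)$, whereas the lemma asserts genuine membership in $\PsiDO[i]^0$; the spectral-gap argument delivers this directly.
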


\begin{proof}
Let $E\defeq E_\mu\oplus E_{\mu-\alpha_i}$.
Fix $\epsilon>0$ such that $\Spec(D_i) \cap (-\epsilon,\epsilon) = \{0\}$.  Let $f:\RR\to[-1,1]$ be smooth with $f(0)=0$ and $f(x) = \sign(x)$ for all $|x|\geq\epsilon$.  
A fibrewise application of \cite[Theorem 1.3]{Taylor} shows that $ f(D_i) = \Phase{D_i} \in \PsiDO[i]^0(\scrX;E)$.  Moreover the proof of the theorem shows that its full symbol has an asymptotic expansion with leading term $f(\Symbol[i] D_i)$.  Note that
$$
  (\Symbol[i] D_i)(\xi) = \smatrix{ 0 & \xi(X_i) \\ \overline{\xi(X_i)} &0 }
$$
has spectrum $\{\pm | \xi(X_i) | \}$, so if $\xi$ is large enough that $|\xi(X_i)|>\epsilon$, then
$$
  f (\Symbol[i] D_i)(\xi) = \Phase{(\Symbol[i] D_i(\xi))} 
    = \smatrix{ 0 & \Phase{(\xi(X_i))} \\ \Phase{(\overline{\xi(X_i)})} &0 }.
$$
This is radially constant on $(\lie{k}_i/\lie{m})^*$ for $|\xi(X_i)|>\epsilon$.  The principal symbol is the limit at the sphere at infinity.  
\end{proof}

\begin{theorem}
\label{prop:PsiDOs_in_K}
\label{thm:PsiDOs_in_A}
Let $E, E'$ be $\Lie{K}$-homogeneous vector bundles over $\scrX$.  Then 
\begin{enumerate}
\item $\PsiDO[i]^{-1}(E,E') \subseteq \scrK[\alpha_i]$,
\item $\PsiDO[i]^{0}(E,E') \subseteq \scrA$,
\end{enumerate}
\end{theorem}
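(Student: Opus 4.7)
My plan is to derive (ii) from (i) using the Connes symbol sequence \eref{eq:symbol-sequence}, and to attack (i) by identifying the $\Lie{K}_i$-isotypical decomposition with the spectral decomposition of a fiberwise Laplacian.

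For (i) I would first observe that $\lie{z}_i \subseteq \lie{m}_\CC$ acts as a scalar on the weight space $\LXE[\mu]$, so the $\Lie{K}_i$-isotypical projections there coincide with the $\lie{s}_i$-isotypical projections, which are spectral projections of the $\lie{s}_i$-Casimir $\Omega_i$. The operator $\Omega_i$ is a fiberwise Laplacian along the $\CP^1$-fibration $\fibration[i]$, with eigenvalue $\sim l^2$ on the $\lie{s}_i$-type of highest weight $l$; hence $(1+\Omega_i)^{-1/2}$ lies in $\scrK[\alpha_i]$ by Proposition \ref{prop:KS_equivalent_conditions}(iii). A standard longitudinal parametrix construction, as in \cite{MS-GAFS}, would then let me factor an arbitrary $T \in \PsiDO[i]^{-1}(E,E')$ as $T = (1+\Omega_i)^{-1/2} B$ with $B$ bounded (and symmetrically on the left). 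Because $\scrK[\alpha_i]$ absorbs composition with bounded operators on either side (Proposition \ref{prop:AS_equivalent_conditions}(iv), applied in $\scrA$, together with Lemma \ref{lem:Ji_is_Ki}), this forces $T \in \scrK[\alpha_i]$.

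For (ii) I would first use Lemma \ref{lem:Ji_is_Ki} and the fact that $\scrA$ is norm-closed, together with exactness of \eref{eq:symbol-sequence}, to reduce the claim to producing $\scrA$-lifts for a generating family of the symbol $C^*$-algebra $C(\cosphere\foliation[i]; \End E)$. Continuous endomorphism-valued functions on $\scrX$ lift to $\scrA$ as multiplication operators via Lemma \ref{lem:mult_ops_in_A}, and these already separate points of the base. To separate the two antipodal points of each cosphere fiber I would invoke Lemma \ref{lem:F_in_PsiDO}: the principal symbols $\xi \mapsto \Phase{\xi(X_i)}$ and $\xi \mapsto \Phase{\xi(Y_i)}$ of $\Phase{X_i}$ and $\Phase{Y_i}$ complete the generating set. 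So (ii) reduces to showing $\Phase{X_i}, \Phase{Y_i} \in \scrA$.

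Membership in $\scrA[\alpha_i]$ is free, since $X_i, Y_i \in \lie{s}_i$ preserve each $\lie{s}_i$-isotype of $\LXE[\mu]$ and thus so do their polar parts. The hard part, and the real obstacle, will be containment in $\scrA[\alpha_j]$ for $j \neq i$. By Proposition \ref{prop:AS_equivalent_conditions}(iii) I would need to show that $\Phase{X_i} p_\tau$ and $p_\tau \Phase{X_i}$ lie in $\scrK[\alpha_j]$ for each $\tau \in \Khat_j$, which amounts to controlling quantitatively how $\Phase{X_i}$---diagonal with respect to $\Lie{K}_i$-types but \emph{not} with respect to $\Lie{K}_j$-types---redistributes a single $\Lie{K}_j$-isotype across the $\Lie{K}_j$-decomposition of the codomain. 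This is an explicit $\SU(3)$ branching-and-recombination calculation and is, as the paper's roadmap indicates, deferred to Appendix \ref{sec:PsiDOs_in_A}.
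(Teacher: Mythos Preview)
Your outline for part (ii) matches the paper's approach closely: the paper also reduces to showing $\Phase{X_i}\in\scrA$ (Theorem \ref{thm:phase_in_A}) and then applies Stone--Weierstrass to the image of $\overline{\PsiDO[i]^{0}}\cap\scrA$ under the symbol map, using multiplication operators to separate base points and $\multop{\varphi}\Phase{X_i}$ to separate cosphere-fibre points. One detail you omit is the reduction from general $E,E'$ to the trivial line bundle $E_0$ via a partition of unity (Lemma \ref{lem:partition_of_unity}), but this is routine.

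There is, however, a genuine slip in your sketch for (i). You write that ``$\scrK[\alpha_i]$ absorbs composition with bounded operators on either side''. This is false: $\scrK[\alpha_i]$ is an ideal in $\scrA[\alpha_i]$ (its multiplier algebra, by Proposition \ref{prop:AS_equivalent_conditions}(iv)), not in $\scrL$. So from $T=(1+\Omega_i)^{-\half}B$ with $B$ merely bounded you cannot conclude $T\in\scrK[\alpha_i]$. The repair is that your parametrix $B=(1+\Omega_i)^{\half}T$ is itself a longitudinal operator of order~$0$, and the paper cites from \cite{Yuncken:PsiDOs} that $\PsiDO[i]^{0}\subseteq\scrA[\alpha_i]$; with $B\in\scrA[\alpha_i]$ the ideal property does apply. (The paper does not re-prove (i) at all; it simply cites \cite[Proposition 1.12]{Yuncken:PsiDOs}.)

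Finally, note that the real work---showing $\Phase{X_1}p_\sigma\in\scrK[\alpha_2]$ for each $\sigma\in\Khat_2$---is not addressed in your proposal beyond naming it. In the paper this occupies the whole of Appendix \ref{sec:PsiDOs_in_A}: an explicit comparison of two Gelfand--Tsetlin bases related by the longest Weyl element, a three-term recurrence whose approximate solution involves Legendre polynomials, and a limit computation (Lemma \ref{lem:limit}) yielding a quantitative decay estimate. None of this is routine branching combinatorics, and the paper explicitly warns that a direct functional-calculus argument fails because $X_1$ is not a regular unbounded multiplier of $\scrK[\alpha_2]$.
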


Part (i) is proven in Proposition 1.12 of \cite{Yuncken:PsiDOs}.  It is also shown there that $\PsiDO[i]^0(E,E') \subseteq \scrA[i]$.  The more difficult question of showing $\PsiDO[i]^0(E,E') \subseteq \scrA[j]$ for $j\neq i$ requires some lengthy computations in noncommutative harmonic analysis.  In order not to disrupt the flow of ideas too severely, we have presented the proof in Appendix \ref{sec:PsiDOs_in_A}.

As an indication of the subtleties involved, we remark that the longitudinally elliptic differential operator $X_1$ is {\em not} an unbounded multiplier of $\scrK[\alpha_2]$.  To see this, note that $(1+X_1^*X_1)^{-\half} \in \PsiDO[1]^{-1}(E_\mu) \subseteq \scrK[\alpha_1]$.  Since $\scrK[\alpha_1].\scrK[\alpha_2] \subseteq \scrK$, the range of $(1+X_1^*X_1)^{-\half}$ as a multiplier of $\scrK[\alpha_2]$ is not dense.  Thus, $X_1$ is not regular with respect to $\scrK[\alpha_2]$ (see \cite[Chapter 10]{Lance}).  Hence, proving that $\Phase{X_1}$ multiplies $\scrK[2]$ can not be achieved by direct functional calculus.

\begin{lemma}
\label{lem:F_f_commute}
Let $i=1,2$ and let $\mu, \nu$ be weights.  For any $f\in\CXE[\nu-\mu]$, the diagram
$$ 
  \xymatrix{
    \LXE[\mu] \ar[r]^{M_f} \ar[d]_{\Phase{X_i}} &
      \LXE[\nu] \ar[d]^{\Phase{X_i}} \\
    \LXE[\mu-\alpha_i] \ar[r]_{M_f} &
      \LXE[\nu-\alpha_i]
  }
$$
commutes modulo $\scrK[\alpha_i]$.
\end{lemma}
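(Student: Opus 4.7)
The plan is to show that the commutator $\Phase{X_i} M_f - M_f \Phase{X_i}$ lies in the operator-norm closure $\overline{\PsiDO[i]^{-1}}$, and then to invoke Theorem \ref{thm:PsiDOs_in_A}(i) together with the fact that $\scrK[\alpha_i]$ is norm closed. This is the standard ``symbols commute, hence commutator has lower order'' argument, which is exceptionally clean here because every bundle in sight is a line bundle.

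The first step is to place both compositions in $\overline{\PsiDO[i]^{0}}(\LXE[\mu], \LXE[\nu-\alpha_i])$. By Lemma \ref{lem:F_in_PsiDO}, $\Phase{X_i}$ is an order-zero $\foliation[i]$-longitudinal pseudodifferential operator with principal symbol $\xi \mapsto \Phase{\xi(X_i)}$. The multiplication operator $M_f$, for smooth $f$, is an order-zero longitudinal pseudodifferential operator between the appropriate bundles with principal symbol $f$ itself (constant in the cotangent fibre variable of $\foliation[i]^*$); for merely continuous $f$ it is a norm limit of such operators and so still lies in $\overline{\PsiDO[i]^0}$.

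Using the canonical identification $\End(E_\mu, E_\nu) \cong L_{(\nu-\mu)\oplus 0}$ of Section \ref{sec:homogeneous_vector_bundles}, the principal symbols of all the relevant operators take scalar values on $\cosphere\foliation[i]$. By the multiplicativity of the symbol map in the sequence \eref{eq:symbol-sequence}, both compositions have principal symbol equal to the pointwise product $f(x) \cdot \Phase{\xi(X_i)}$, so the commutator has zero principal symbol and therefore lies in $\overline{\PsiDO[i]^{-1}}(\LXE[\mu], \LXE[\nu-\alpha_i])$. Theorem \ref{thm:PsiDOs_in_A}(i) gives $\PsiDO[i]^{-1} \subseteq \scrK[\alpha_i]$, and passing to the norm closure---legitimate because $\scrK[\alpha_i]$ is a norm-closed space of morphisms---then completes the argument.

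The main technical subtlety is the rigorous setup of the longitudinal pseudodifferential calculus between homogeneous line bundles of differing characters, and verifying that $M_f$ genuinely lies in $\overline{\PsiDO[i]^0}$ for continuous $f$. This can be made painless by first trivialising via the partition of unity from Lemma \ref{lem:partition_of_unity}, which reduces the whole computation to multiplication by scalar functions acting on $L^2(\scrX)$, where the symbol calculus is entirely standard.
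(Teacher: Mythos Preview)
Your proposal is correct and follows essentially the same approach as the paper: both use that $\Phase{X_i}$ and $M_f$ are order-zero longitudinal operators whose principal symbols agree along the two sides of the diagram, so the commutator lies in $\overline{\PsiDO[i]^{-1}} \subseteq \scrK[\alpha_i]$. The paper's proof is terser, simply noting that the principal symbol of $\Phase{X_i}$ (as an element of $C(\cosphere\foliation[i];E_{\alpha_i})$) is independent of the weight $\mu$; your version spells out the scalar-symbol argument and the smooth-to-continuous density step, which the paper leaves implicit.
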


\begin{remark}
\label{rem:commutator_notation}
We abbreviate this result by writing $[\Phase{X_i}, M_s] \in \scrK[\alpha_i]$.  By taking adjoints, we also have $[\Phase{Y_i}, M_s] \in \scrK[\alpha_i]$. 
\end{remark}

\begin{proof}
As an element of $C(\cosphere\foliation[i];E_{\alpha_i})$, the principal symbol of $\Phase{X_i}:\LXE[\mu]\to\LXE[\mu-\alpha_i]$ is independent of the weight $\mu$.  Thus, the above diagram commutes at the level of principal symbols.
\end{proof}

%---------------------------------------------------------------------------------------

\section{The normalized BGG complex}
\label{sec:construction}

\subsection{$\Lie{G}$-continuity}
\label{sec:G-continuity}

Before embarking on the main construction, we need to make some remarks regarding the issue of $\Lie{G}$-continuity.  Recall that a bounded operator $A$ between unitary $\Lie{G}$-representations is $\Lie{G}$-continuous if the map $g \mapsto g .A. g^{-1}$ is continuous in the operator-norm topology. 

Rather than burden the notation with extra decorations, we choose to make the convention that {\bf throughout this section, we use $\scrK[\alpha_i]$ ($i=1,2$) to denote its $C^*$-subcategory of $\Lie{G}$-continuous elements.} 

This is reasonable, since almost every operator we deal with is $\Lie{G}$-continuous.  From \cite{AS4}, we know that for any homogeneous vector bundles $E$, $E'$ over $\scrX$, the set of longitudinal pseudodifferential operators $\overline{\PsiDO[i]^0}(E,E,')$ consists of $\Lie{G}$-continuous operators.  This includes continuous multiplication operators, in the sense of Section \ref{sec:homogeneous_vector_bundles} (which are $\Lie{G}$-continuous for much simpler reasons).
The notable exceptions, of course, are the representations $U_\mu(g)$ of the group elements themselves.

In the majority of instances, where $\Lie{G}$-continuity is a trivial consequence of the above remarks, we will not make specific mention of it in the proofs.

\subsection{Intertwining operators}

Let $\mu$, $\mu'$ be weights for $\Lie{K}= \SU(3)$.  It is well known that the principal series representations $U_\mu$ and $U_{\mu'}$ are unitarily equivalent if and only if $\mu'=w\cdot\mu$ for some Weyl group element $w\in W$.  
When $w=\reflection{\alpha_i}$ is a simple reflection corresponding to the root $\alpha_i$, there is a very concise formula for the intertwining operator.  
%Note that $\mu-\reflection{\alpha_i}\mu$ is a multiple of $\alpha_i$.

\begin{proposition}
\label{prop:intertwiner_formula}
Let $\mu$, $\mu'$ be weights with $\mu'=\reflection{\alpha_i}\mu$, so that $\mu-\mu' = n\alpha_i$ for some $n\in\ZZ$.  If $n>0$, the operator $(\Phase{X_i})^n:  \LXE[\mu] \to \LXE[\mu']$ intertwines $U_\mu$ and $U_\mu'$.  If $n<0$, then $(\Phase{Y_i})^n: \LXE[\mu] \to \LXE[\mu']$ is an intertwiner.
\end{proposition}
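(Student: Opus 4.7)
Using the Cartan decomposition $\Lie{G} = \Lie{KAK}$, it suffices to verify the intertwining relation separately for elements of $\Lie{K}$ and of $\Lie{A}$. The $\Lie{K}$-equivariance is immediate: $X_i$ arises from a right-invariant root vector on $\Lie{K}$, hence commutes with left translation, giving $X_i \, U_\mu(k) = U_{\mu-\alpha_i}(k) \, X_i$ for all $k\in\Lie{K}$. Since $U_\mu(k)$ and $U_{\mu-\alpha_i}(k)$ are unitary, uniqueness of the polar decomposition transfers this relation to $\Phase{X_i}$, and iterating $n$ times gives the $\Lie{K}$-intertwining of $(\Phase{X_i})^n$. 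The case $n<0$ is symmetric, with $Y_i$ in place of $X_i$.

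The substantive step is $\Lie{A}$-equivariance. First, decompose $\LXE[\mu]$ and $\LXE[\mu']$ into $\lie{s}_i$-isotypic subspaces, indexed by highest weights $l\geq |\mu_i|$ with $l\equiv \mu_i\pmod{2}$. Within each $\lie{s}_i$-type of highest weight $l$, the formulas \eref{eq:X-formula}--\eref{eq:Y-formula} together with the definition of the polar decomposition force $(\Phase{X_i})^n e_{-\mu_i}=e_{\mu_i}$; since $\mu'_i=-\mu_i$, the vector $e_{\mu_i}$ is precisely the $(-\mu'_i)$-weight vector of this $\lie{s}_i$-type viewed inside $\LXE[\mu']$. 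Hence $(\Phase{X_i})^n$ realises a canonical, $\Lie{K}$-equivariant, unitary identification of corresponding $\lie{s}_i$-isotypic subspaces of $\LXE[\mu]$ and $\LXE[\mu']$.

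To promote this isotype-by-isotype identification to an $\Lie{A}$-intertwiner, I would reduce to the classical intertwining theory for $\SL(2,\CC)$. The subgroup $\Lie{S}_i\leq\Lie{G}$ corresponding to $\lie{s}_i$ is isomorphic to $\SL(2,\CC)$, and induction in stages through $\Lie{P}_i$ shows that $U_\mu|_{\Lie{S}_i}$ decomposes $\lie{s}_i$-isotypically as a direct sum of unitary principal series of $\Lie{S}_i$. The pair of parameters for $U_\mu|_{\Lie{S}_i}$ and $U_{\mu'}|_{\Lie{S}_i}$ is related by the nontrivial Weyl element of $\Lie{S}_i$, and after suitable unitary normalisation the Knapp--Stein intertwiner between the two acts on each $(\Lie{K}\cap\Lie{S}_i)$-type precisely as the identification $e_{-\mu_i}\mapsto e_{\mu_i}$. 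This yields intertwining with $\exp(\RR H_i)\subseteq\Lie{A}$. For the complementary one-parameter subgroup $\exp(\RR H_i')\subseteq\Lie{A}$: since $H_i'$ commutes with $\lie{s}_i$ it preserves each $\lie{s}_i$-isotype, and Lemma \ref{lem:a-action} shows that the action of $U_\mu(H_i')$ depends only on the $\rho$-shift common to $E_\mu$ and $E_{\mu'}$, so $\exp(\RR H_i')$-intertwining follows automatically.

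The main obstacle is the $\SL(2,\CC)$ step: identifying $(\Phase{X_i})^n$ on each $\lie{s}_i$-type with the Knapp--Stein intertwiner in its unitary normalisation, including checking that the $\rho$-shift indeed produces unitary principal series of $\Lie{S}_i$ and that the normalisation truly reduces to the identity on $(\Lie{K}\cap\Lie{S}_i)$-types rather than a nontrivial $l$-dependent scalar. A more elementary but longer alternative would bypass Knapp--Stein by verifying $\Lie{A}$-equivariance infinitesimally using Lemma \ref{lem:a-action}: write out $[U_\mu(H), (\Phase{X_i})^n]$ on a weight-vector basis of each $\lie{s}_i$-type for $H\in\lie{a}$ and show the commutator vanishes using the tridiagonal structure from Lemma \ref{lem:tridiagonal}.
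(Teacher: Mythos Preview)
Your strategy is genuinely different from the paper's, and as you yourself flag, it is incomplete at the crucial step.

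The paper does not attempt to prove the intertwining property from scratch. Instead it cites Duflo's explicit description of the unitary intertwiner $A(w_i,\mu,0)$ between $U_\mu$ and $U_{\reflection{\alpha_i}\mu}$: on a matrix unit $\matrixunit{\eta^\dual}{\xi}$ with $\xi$ in an $\lie{s}_i$-irreducible of highest weight $\delta$, Duflo's operator sends $\xi$ to $(-1)^{\frac12(\delta+|\mu_i|)}|\mu_i|^{-1}\pi(w_i)\xi$. A direct $\slx(2)$ computation (the identity $(\Phase X)^j e_{-j}=(-1)^{\frac12(\delta+j)}w\cdot e_{-j}$) then shows this equals $|\mu_i|^{-1}(\Phase X_i)^n\xi$. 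So $(\Phase X_i)^n$ is a positive scalar multiple of a known intertwiner, and one is done. The entire argument is a comparison of two explicit formulas on each $\lie{s}_i$-type.

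Your outline, by contrast, tries to reprove Duflo's result via induction in stages and Knapp--Stein theory for $\SL(2,\CC)$. This is a legitimate route, but the step you label ``the main obstacle'' is exactly the content of Duflo's formula: one must know that the normalised rank-one intertwiner acts on the $\SU(2)$-type of highest weight $\delta$ by the map $e_{-\mu_i}\mapsto e_{\mu_i}$ with no residual $\delta$-dependent scalar. Without that input you have only reduced the problem, not solved it.

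Your side argument for $\exp(\RR H_i')$ is also not complete as written. It is true that the formula for $\Xi(\xi)$ in Lemma~\ref{lem:a-action} involves $\rho$ and not $\mu$, but this does not by itself give intertwining: you still need $(\Phase X_i)^n$ applied to $\Xi(\xi)$ to equal $\Xi((\Phase X_i)^n\xi)$, and $\Xi$ involves the actions $\pi(X_\alpha)$ for \emph{all} roots $\alpha$, not only those in $\lie{s}_i$. So commutation with $(\Phase X_i)^n$ is not automatic from $[H_i',\lie{s}_i]=0$ alone.

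In short: the paper's proof is a two-line citation plus a short $\slx(2)$ identity; your route would require either redoing the Knapp--Stein normalisation computation for $\SL(2,\CC)$ or carrying out the infinitesimal check you mention at the end, both of which amount to reproving what Duflo already provides.
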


This is essentially the formula given by Duflo in \cite[Ch.~III]{Duflo}.  However, Duflo's formulation is sufficiently different that we feel a brief comparison is worthwhile.  

\begin{proof}
We follow the notation for $\slx(2,\CC)$-representations from the end of Section \ref{sec:parabolic_subgroups}.  Note that Equations \eref{eq:X-formula} and \eref{eq:Y-formula} imply that
$(\Phase{X})e_j = e_{j+2}$ and $(\Phase{Y}) e_j = e_{j-2}$.  Secondly, with $w=\smatrix{0&-1\\1&0}$, 
\begin{equation}
\label{eq:Phase(X)_vs_w}
\begin{array}{rcccl}
  (\Phase{X})^j \cdot e_{-j} &=& e_j& =& (-1)^{\half(\delta+j)} w\cdot e_{-j}, \\
  (\Phase{Y})^j \cdot e_{j} &= &e_{-j}& =& (-1)^{\half(\delta-j)} w\cdot e_{j},
\end{array}
\end{equation}
for any $j\geq 0$. (See \cite[\S{}III.3.5]{Duflo}.)

Recall that the restriction of $\mu$ to a weight of $\lie{s}_i$ is $\mu_i \defeq \mu(H_i)\in\ZZ$.  The hypotheses of the proposition are equivalent to saying $\mu_i = -\mu'_i = n$.

First consider the case $n>0$.  Let $A=A(w_i, \mu,0):\LXE[\mu] \to \LXE[\mu']$ be the intertwiner of \cite[\S{}III.3.1]{Duflo}.  The action of $A$ upon matrix units is given in \cite[\S{}III.3.3 and \S{}III.3.9]{Duflo} as follows.  Let $\pi\in\Khat$, $\eta^\dual\in V^{\pi\dual}$, $\xi\in p_{-\mu}(V^\pi)$ and suppose that $\xi$ lies in an irreducible $\lie{s}_i$-subrepresentation of $V^\pi$ with highest weight $\delta$.  Then, in the notation of Section \ref{sec:harmonic_notation}, $A:\matrixunit{\eta^\dual}{\xi} \mapsto \matrixunit{\eta^\dual}{\xi'}$ where
\begin{eqnarray*}
\label{eq:Duflo-formula}
  \xi' &=& (-1)^{\half(\delta + |\mu_i|)} |\mu_i|^{-1} \pi(w_i)\xi \\
    &=& |\mu_i|^{-1} (\Phase{X_i})^n \xi.
\end{eqnarray*}
Hence, $A=|\mu_i|^{-1} (\Phase{X_i})^n:\LXE[\mu] \to \LXE[\mu']$, where $X_i$ here denotes the right regular action.  Thus, $(\Phase{X_i})^n$ differs from $A$ by the positive scalar $|\mu_i|=n$.

The case $n<0$ follows since $\Phase{Y_i} = \Phase{X_i}^*$.

\end{proof}

\medskip

We now recap the directed graph structure which underlies the BGG complex.  For our $\K$-homological purposes, it will be convenient to make an undirected graph, or more accurately, to include also the reversal of each edge.  

As before, if $\alpha$ is a positive root, we use $\reflection{\alpha}\in\Weylgroup$ to denote the reflection in the wall orthogonal to $\alpha$.  For $w,w'\in\Weylgroup$, we write $\edge[\alpha]{w}{w'}$ if $w'=\reflection{\alpha}w$ and $\length(w') = \length(w) \pm 1$.  We will write $\edge{w}{w'}$ if $\edge[\alpha]{w}{w'}$ for some $\alpha\in\Roots^+$.  An edge $\edge[\alpha]{w}{w'}$ will be called  {\em simple} if $\alpha$ is a simple root.

For $\Lie{G}=\SL(3,\CC)$, this yields the graph
\begin{equation}
\label{eq:Weyl_graph}
  \xymatrix{
    & \stackrel{\reflection{\alpha_1}}{\bullet} \ar@{<->}[rr]^{\rho} \ar@{<->}[ddrr]^(0.7){\alpha_2} 
      && \stackrel{\reflection{\alpha_1}\reflection{\alpha_2}}{\bullet} \ar@{<->}[dr]^{\alpha_2} \\
    \stackrel{1}{\bullet} \ar@{<->}[ur]^{\alpha_1} \ar@{<->}[dr]_{\alpha_2} 
      &&&& \stackrel{w_\rho}{\bullet} \\
    & \stackrel{\reflection{\alpha_2}}{\bullet} \ar@{<->}[rr]_{\rho} \ar@{<->}[uurr]_(0.7){\alpha_1} 
      && \stackrel{\reflection{\alpha_2}\reflection{\alpha_1}}{\bullet} \ar@{<->}[ur]_{\alpha_1} 
  }
\end{equation}

\begin{definition}
\label{def:intertwiners}
Fix a dominant weight $\lambda$.  If $\edge[\alpha_i]{w}{w'}$ is a simple edge, we denote by $\intertwiner{\lambda}{w}{w'}$ the intertwining operator of Lemma \ref{prop:intertwiner_formula}:
$$
  \intertwiner{\lambda}{w}{w'} \defeq 
    \begin{cases} 
      (\Phase{X_i})^n  & \text{if $n\geq0$},\\
      (\Phase{Y_i})^{-n}  & \text{if $n\leq0$},
    \end{cases}      
$$
where $w\lambda - w'\lambda = n\alpha_i$.
These will be referred to as {\em simple intertwiners}.  Note that $\intertwiner{\lambda}{w'}{w} = \intertwiner{\lambda}{w}{w'}^*$.

For the non-simple edges, we define intertwiners as compositions of simple intertwiners:
\begin{eqnarray}
  \intertwiner{\lambda}{\reflection{\alpha_1}}{\reflection{\alpha_1}\reflection{\alpha_2}} &\defeq& 
    \intertwiner{\lambda}{\reflection{\alpha_2}}{\reflection{\alpha_1}\reflection{\alpha_2}}.
    \intertwiner{\lambda}{1}{\reflection{\alpha_2}}.
    \intertwiner{\lambda}{\reflection{\alpha_1}}{1} \nonumber \\
  \intertwiner{\lambda}{\reflection{\alpha_2}}{\reflection{\alpha_2}\reflection{\alpha_1}} &\defeq& 
    \intertwiner{\lambda}{\reflection{\alpha_1}}{\reflection{\alpha_2}\reflection{\alpha_1}}.
    \intertwiner{\lambda}{1}{\reflection{\alpha_1}}.
    \intertwiner{\lambda}{\reflection{\alpha_2}}{1}, 
  \label{eq:non-simple_intertwiners}
\end{eqnarray}
and 
$\intertwiner{\lambda}{\reflection{\alpha_1}\reflection{\alpha_2}}{\reflection{\alpha_1}} \defeq \intertwiner{\lambda}{\reflection{\alpha_1}}{\reflection{\alpha_1}\reflection{\alpha_2}}^*$, $\intertwiner{\lambda}{\reflection{\alpha_2}\reflection{\alpha_1}}{\reflection{\alpha_2}} \defeq \intertwiner{\lambda}{\reflection{\alpha_2}}{\reflection{\alpha_2}\reflection{\alpha_1}}^*$.
\end{definition}

\begin{remark}
\label{rmk:commuting_diagram_of_intertwiners}
Duflo's intertwiners form a commuting diagram of the form
\begin{equation}
\label{eq:simple_intertwiners}
  \xymatrix@!C=5ex{
    & \LXE[\reflection{\alpha_1}\lambda] \ar[ddrr] && \LXE[\reflection{\alpha_1}\reflection{\alpha_2}\lambda] \ar[dr] \\
   \LXE[\lambda] \ar[ur] \ar[dr] &&&& \LXE[w_\rho\lambda] \\
    & \LXE[\reflection{\alpha_2}\lambda] \ar[uurr] && \LXE[\reflection{\alpha_2}\reflection{\alpha_1}\lambda] \ar[ur] 
  }
\end{equation}
Since the simple intertwiners $\intertwiner{\lambda}{w}{w'}$ defined here are positive scalar multiples of Duflo's, 
%(see the proof of Proposition \ref{prop:intertwiner_formula}),  
the corresponding diagram of intertwiners $\intertwiner{\lambda}{w}{w'}$ commutes up to some positive scalar. But $\intertwiner{\lambda}{w}{w'} = (\Phase{X_i})^n$ is unitary, so that scalar is $1$.  The non-simple intertwiners defined by Equation \eref{eq:non-simple_intertwiners} are precisely those that complete \eref{eq:simple_intertwiners} to a commuting diagram of the form \eref{eq:Weyl_graph}.
\end{remark}

\begin{definition}
Define $\scrK[\rho] \defeq \scrK[\alpha_1] + \scrK[\alpha_2]$.  That is, $\scrK[\rho](H,H') \defeq \scrK[\alpha_1] (H,H') + \scrK[\alpha_2] (H,H') $ for any harmonic $\Lie{K}$-spaces $H$, $H'$.  Following the convention of Section \ref{sec:G-continuity}, we are including the condition of $\Lie{G}$-continuity in this definition.

\end{definition}

\begin{lemma}
\label{lem:intertwiners_in_A}
Let $\lambda$ be a dominant weight.
\begin{enumerate}
\item  For each $\edge{w}{w'}$, $\intertwiner{\lambda}{w}{w'} \in \scrA$.
\item  If $\edge[\alpha]{w}{w'}$, then $[\intertwiner{\lambda}{w}{w'}, \multop{f}] \in \scrK[\alpha]$ for any $f\in C(\scrX)$.
\end{enumerate}
\end{lemma}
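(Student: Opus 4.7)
The plan is to prove both parts essentially by combining the machinery already set up: Theorem \ref{thm:PsiDOs_in_A}(ii) (pseudodifferential order-zero operators live in $\scrA$), Lemma \ref{lem:F_f_commute} with Remark \ref{rem:commutator_notation} (phases commute with multipliers mod $\scrK[\alpha_i]$), and Theorem \ref{thm:lattice_of_ideals}(i) ($\scrK[\alpha_i]$ is an ideal in $\scrA$). The structure of the argument splits into a simple-edge case, handled by iterating along a power, and a non-simple-edge case, handled by iterating along the three-factor composition in Definition \ref{def:intertwiners}.

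For part (i), if $\edge[\alpha_i]{w}{w'}$ is simple then $\intertwiner{\lambda}{w}{w'}$ is a power of $\Phase{X_i}$ or $\Phase{Y_i}$. By Lemma \ref{lem:F_in_PsiDO} these phases are $\foliation[i]$-longitudinal pseudodifferential operators of order zero, so they lie in $\scrA$ by Theorem \ref{thm:PsiDOs_in_A}(ii); since $\scrA$ is a $C^*$-category closed under composition of composable arrows, their powers also lie in $\scrA$. The non-simple intertwiners are, by Definition \ref{def:intertwiners}, three-fold compositions of simple ones, so the same closure under composition settles (i) in those cases.

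For part (ii), the simple-edge case follows by expanding
\[
  [\Phase{X_i}^n, \multop{f}] = \sum_{k=0}^{n-1} \Phase{X_i}^{k}\,[\Phase{X_i}, \multop{f}_k]\,\Phase{X_i}^{n-1-k},
\]
where the subscript $k$ on $\multop{f}$ records the line bundle on which it acts (the same continuous function $f$ viewed via Equation \eref{eq:covariance_of_multops} and Remark \ref{rmk:mult_ops_in_compact_picture}). Each bracket is in $\scrK[\alpha_i]$ by Lemma \ref{lem:F_f_commute}, and the outer factors are in $\scrA$, so Theorem \ref{thm:lattice_of_ideals}(i) places the sum in $\scrK[\alpha_i]$. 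For the non-simple edges $\edge[\rho]{w}{w'}$, inspection of the Weyl graph \eref{eq:Weyl_graph} shows that the defining three-fold product in Definition \ref{def:intertwiners} uses roots in the pattern $\alpha_1,\alpha_2,\alpha_1$ or $\alpha_2,\alpha_1,\alpha_2$. Applying the Leibniz rule to the three-factor product yields three summands, each of which has one commutator factor in $\scrK[\alpha_1]$ or $\scrK[\alpha_2]$ sandwiched between $\scrA$-factors; hence each summand is in the relevant $\scrK[\alpha_j]$ by the ideal property, and the total commutator lies in $\scrK[\alpha_1] + \scrK[\alpha_2] = \scrK[\rho]$.

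The main obstacle is really only bookkeeping, not substance: tracking which line bundle $\multop{f}$ acts on at each intermediate stage (where the tautological identifications $\End(E_\mu, E_{\mu'})\cong L_{(\mu'-\mu)\oplus 0}$ from Section \ref{sec:homogeneous_vector_bundles} are used) and checking the root-type of each edge appearing in the non-simple composition. $\Lie{G}$-continuity is automatic throughout since all longitudinal pseudodifferential operators of order zero and all continuous multiplication operators are $\Lie{G}$-continuous, in agreement with the convention of Section \ref{sec:G-continuity}.
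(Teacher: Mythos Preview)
Your proposal is correct and follows essentially the same approach as the paper's proof: part (i) via Theorem \ref{thm:PsiDOs_in_A} and closure of $\scrA$ under composition, part (ii) via the Leibniz rule together with Lemma \ref{lem:F_f_commute} and the ideal property of $\scrK[\alpha_i]$ in $\scrA$. The only difference is cosmetic: you spell out the Leibniz expansion for the $n$th power $(\Phase{X_i})^n$ in the simple-edge case, whereas the paper simply says this ``follows from Lemma \ref{lem:F_f_commute}'' and leaves that iteration implicit; for the non-simple edges both arguments are the same three-term Leibniz computation landing in $\scrK[\alpha_1]+\scrK[\alpha_2]=\scrK[\rho]$.
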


\begin{proof}
Part {(i)} is immediate from Theorem \ref{thm:PsiDOs_in_A}.  If $\alpha$ is a simple root, then {(ii)} follows from Lemma \ref{lem:F_f_commute}.  For $\alpha=\rho$, there are four intertwiners to be checked.  The following calculation is representative of all of them:
\begin{eqnarray*}
[\intertwiner{\lambda}{\reflection{\alpha_1}}{\reflection{\alpha_1}\reflection{\alpha_2}} , \multop{f}] &=& 
  [\intertwiner{\lambda}{\reflection{\alpha_2}}{\reflection{\alpha_1}\reflection{\alpha_2}}, \multop{f}].
    \intertwiner{\lambda}{1}{\reflection{\alpha_2}}.
    \intertwiner{\lambda}{\reflection{\alpha_1}}{1} \\
  &&\quad + \intertwiner{\lambda}{\reflection{\alpha_2}}{\reflection{\alpha_1}\reflection{\alpha_2}}.
   [ \intertwiner{\lambda}{1}{\reflection{\alpha_2}}, \multop{f} ].
    \intertwiner{\lambda}{\reflection{\alpha_1}}{1} \\
  &&\quad +  \intertwiner{\lambda}{\reflection{\alpha_2}}{\reflection{\alpha_1}\reflection{\alpha_2}}.
    \intertwiner{\lambda}{1}{\reflection{\alpha_2}}.
    [ \intertwiner{\lambda}{\reflection{\alpha_1}}{1}, \multop{f} ] . \\
  &\in& \scrK[\alpha_1] + \scrK[\alpha_2] + \scrK[\alpha_1] = \scrK[\rho].
\end{eqnarray*}
\end{proof}

%--------------------------------------------------------------
\subsection{Normalized BGG operators}

\begin{definition}
\label{def:shifted_action}
Define the {\em shifted action} of the Weyl group on weights by $w\shiftedaction \mu \defeq w(\mu+\rho) -\rho$.
\end{definition}

From now on, $\lambda$ will denote a dominant weight.

\begin{definition}
\label{def:BGG_operators}
If $\edge[\alpha_i]{w}{w'}$ is a simple edge, then $w\shiftedaction \lambda - w'\shiftedaction \lambda = n\alpha_i$ for some $n\in\ZZ$.  We define the {\em normalized BGG operator} $\BGG{\lambda}{w}{w'} : \LXE[w\shiftedaction \lambda] \to \LXE[w'\shiftedaction \lambda]$ by
$$
  \BGG{\lambda}{w}{w'} \defeq 
    \begin{cases} 
      (\Phase{X_i})^n  & \text{if $n\geq0$},\\
      (\Phase{Y_i})^{-n}  & \text{if $n\leq0$}.
    \end{cases}      
$$
where $w\shiftedaction\lambda-w'\shiftedaction\lambda = n \alpha_i$.

For the non-simple arrows, define
\begin{eqnarray*}
  \BGG{\lambda}{\reflection{\alpha_1}}{\reflection{\alpha_1}\reflection{\alpha_2}} &\defeq& 
    \BGG{\lambda}{\reflection{\alpha_2}}{\reflection{\alpha_1}\reflection{\alpha_2}}.
    \BGG{\lambda}{1}{\reflection{\alpha_2}}.
    \BGG{\lambda}{\reflection{\alpha_1}}{1} \\
  \BGG{\lambda}{\reflection{\alpha_2}}{\reflection{\alpha_2}\reflection{\alpha_1}} &\defeq& 
    \BGG{\lambda}{\reflection{\alpha_1}}{\reflection{\alpha_2}\reflection{\alpha_1}}.
    \BGG{\lambda}{1}{\reflection{\alpha_1}}.
    \BGG{\lambda}{\reflection{\alpha_2}}{1} \\
  \BGG{\lambda}{\reflection{\alpha_1}\reflection{\alpha_2}}{\reflection{\alpha_1}} &\defeq& \BGG{\lambda}{\reflection{\alpha_1}}{\reflection{\alpha_1}\reflection{\alpha_2}}^* \\  
  \BGG{\lambda}{\reflection{\alpha_2}\reflection{\alpha_1}}{\reflection{\alpha_2}} &\defeq& \BGG{\lambda}{\reflection{\alpha_2}}{\reflection{\alpha_2}\reflection{\alpha_1}}^*.
\end{eqnarray*}
\end{definition}

Obviously, the definitions of the normalized BGG operators $\BGG{\lambda}{w}{w'}$ are identical to the definitions of the intertwining operators $\intertwiner{\lambda+\rho}{w}{w'}$, except that the weights of the principal series representations on which they act differ by the shift of $\rho$.  The next few lemmas describe the consequences of this.  To begin with, we have an exact analogue of Lemma \ref{lem:intertwiners_in_A}, with essentially identical proof.

\begin{lemma}
\label{lem:BGG_in_A}
Let $\lambda$ be a dominant weight.
\begin{enumerate}
\item  For each arrow $\edge{w}{w'}$, $\BGG{\lambda}{w}{w'} \in \scrA$.
\item  If $\edge[\alpha]{w}{w'}$, then $[\BGG{\lambda}{w}{w'}, \multop{f}] \in \scrK[\alpha]$ for any $f\in C(\scrX)$.
\end{enumerate}
\end{lemma}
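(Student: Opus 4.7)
The proof will essentially mirror that of Lemma \ref{lem:intertwiners_in_A}, since the only difference between the normalized BGG operators $\BGG{\lambda}{w}{w'}$ and the intertwiners $\intertwiner{\lambda+\rho}{w}{w'}$ is the weight of the principal series on which they act, and the statement in question concerns only membership in $\scrA$ and commutator relations modulo $\scrK[\alpha]$—both of which depend on the pseudodifferential nature of $\Phase{X_i}$ and $\Phase{Y_i}$ rather than on the specific principal series.

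For part (i), I would first handle the simple arrows $\edge[\alpha_i]{w}{w'}$: by Definition \ref{def:BGG_operators}, $\BGG{\lambda}{w}{w'}$ is a power of $\Phase{X_i}$ or $\Phase{Y_i}$, which are $\foliation[i]$-longitudinal pseudodifferential operators of order $0$ by Lemma \ref{lem:F_in_PsiDO}. Theorem \ref{thm:PsiDOs_in_A} then gives $\BGG{\lambda}{w}{w'} \in \scrA$. For the four non-simple arrows, they are defined as compositions of three simple BGG operators, and $\scrA$ is closed under (defined) compositions as a $C^*$-category, so the non-simple BGG operators are in $\scrA$ as well.

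For part (ii) with a simple edge $\edge[\alpha_i]{w}{w'}$, I would use the Leibniz rule to reduce to Lemma \ref{lem:F_f_commute}: writing $\BGG{\lambda}{w}{w'} = T^n$ with $T$ equal to $\Phase{X_i}$ or $\Phase{Y_i}$ (acting between the appropriate bundles), one has
\[
  [T^n, \multop{f}] = \sum_{k=0}^{n-1} T^{k}\,[T, \multop{f}]\, T^{n-1-k}.
\]
Each commutator $[T, \multop{f}]$ lies in $\scrK[\alpha_i]$ by Lemma \ref{lem:F_f_commute} (and Remark \ref{rem:commutator_notation} in the case of $\Phase{Y_i}$), and since $\scrK[\alpha_i]$ is a two-sided ideal in $\scrA$ by Theorem \ref{thm:lattice_of_ideals}(i), each summand is in $\scrK[\alpha_i]$.

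For part (ii) with a non-simple edge ($\alpha = \rho$), there are essentially four operators to check, and all four cases go through the same way. Writing the BGG operator as $\BGG{\lambda}{w}{w'} = B_3 B_2 B_1$ with the $B_j$ being simple BGG operators corresponding to simple edges along roots $\alpha_{i_j}\in\{\alpha_1,\alpha_2\}$, the standard derivation rule gives
\[
  [B_3 B_2 B_1, \multop{f}] = [B_3,\multop{f}] B_2 B_1 + B_3 [B_2,\multop{f}] B_1 + B_3 B_2 [B_1,\multop{f}].
\]
By the simple-edge case already established, each $[B_j,\multop{f}]$ lies in $\scrK[\alpha_{i_j}]$, and composition on either side by elements of $\scrA$ (which the remaining $B_k$'s supply) keeps us in $\scrK[\alpha_{i_j}]$ by the ideal property. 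Since $\{i_1,i_2,i_3\}=\{1,2\}$, the three terms sit in $\scrK[\alpha_1]+\scrK[\alpha_2]=\scrK[\rho]$, as required. No step here presents a genuine obstacle—everything is a direct consequence of the results already established in Sections \ref{sec:harmonic_analysis} and of the formal Leibniz identity.
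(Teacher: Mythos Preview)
Your proposal is correct and follows essentially the same route as the paper, which simply says the proof is ``essentially identical'' to that of Lemma~\ref{lem:intertwiners_in_A}. You have made explicit the Leibniz expansion for $[T^n,\multop{f}]$ that the paper leaves tacit in the simple-edge case, but otherwise the argument is the same: Theorem~\ref{thm:PsiDOs_in_A} for part~(i), Lemma~\ref{lem:F_f_commute} plus the ideal property for the simple edges in part~(ii), and the three-term commutator expansion for the non-simple edges.
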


\begin{lemma}
\label{lem:po1_definition}
Let $\sectionpoi_1,\ldots,\sectionpoi_k\in\CXE[\rho]$ be such that $\sum_{j=1}^k |\sectionpoi_j|^2 =1$, as in Lemma \ref{lem:partition_of_unity}.  If $\edge[\alpha]{w}{w'}$, then
$$
  \BGG{\lambda}{w}{w'}   
    \equiv \sum_{j=1}^k \multop{\overline{\sectionpoi_j}} \intertwiner{\lambda+\rho}{w}{w'} \multop{\sectionpoi_j} 
    \pmod{\scrK[\alpha]}. 
$$
\end{lemma}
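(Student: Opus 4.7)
The proof rests on two structural observations. First, in the compact picture, $\BGG{\lambda}{w}{w'}$ and $\intertwiner{\lambda+\rho}{w}{w'}$ are given by exactly the same product of factors $\Phase{X_i}$ and $\Phase{Y_i}$: for a simple edge $\edge[\alpha_i]{w}{w'}$, the shift cancels in the difference $w\shiftedaction\lambda - w'\shiftedaction\lambda = w(\lambda+\rho) - w'(\lambda+\rho)$, so both are the same power of $\Phase{X_i}$ or $\Phase{Y_i}$; for a non-simple edge, both are the same composition of three such simple factors. The two operators are not literally equal only because the underlying $\chiM$-values of the Hilbert spaces they act on differ by $\rho$. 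Second, by Remark \ref{rmk:mult_ops_in_compact_picture}, multiplication by $\sectionpoi_j \in \CXE[\rho]$ depends only on the $\Lie{K}$-equivariant picture and shifts $\chiM$ by $+\rho$, mapping $\LXE[w\shiftedaction\lambda]$ onto the Hilbert space on which $\intertwiner{\lambda+\rho}{w}{w'}$ naturally acts; $\multop{\overline{\sectionpoi_j}}$ shifts $\chiM$ back by $-\rho$, so the composition on the right-hand side is a well-defined operator $\LXE[w\shiftedaction\lambda]\to\LXE[w'\shiftedaction\lambda]$.

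The central computation is to commute $\intertwiner{\lambda+\rho}{w}{w'}$ through $\multop{\sectionpoi_j}$. By Lemma \ref{lem:F_f_commute}, which explicitly permits $f$ to be a section of an arbitrary homogeneous line bundle, each simple factor $\Phase{X_i}$ or $\Phase{Y_i}$ commutes with $\multop{\sectionpoi_j}$ modulo $\scrK[\alpha_i]$. Iterating by the Leibniz rule (for powers in the simple case, and for the three-fold composition in the non-simple case, where the individual errors lie in $\scrK[\alpha_1]+\scrK[\alpha_2]=\scrK[\rho]$), one obtains
\[
  \intertwiner{\lambda+\rho}{w}{w'}\, \multop{\sectionpoi_j}
  \;\equiv\; \multop{\sectionpoi_j}\, \BGG{\lambda}{w}{w'} \pmod{\scrK[\alpha]},
\]
where $\alpha$ is the edge label. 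The crux here is the first observation: after the commutation, the same formula that defined $\intertwiner{\lambda+\rho}{w}{w'}$ on the $\chiM = w(\lambda+\rho)$ space now acts on the $\chiM = w\shiftedaction\lambda$ space, and is therefore equal to $\BGG{\lambda}{w}{w'}$.

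Now multiply on the left by $\multop{\overline{\sectionpoi_j}}$, which belongs to $\scrA$ by Lemma \ref{lem:mult_ops_in_A}, and sum over $j$. The commutator errors stay in $\scrK[\alpha]$ because $\scrK[\alpha]$ is a two-sided ideal in $\scrA$ (Theorem \ref{thm:lattice_of_ideals}(i)). The main term simplifies as
\[
  \sum_{j=1}^k \multop{\overline{\sectionpoi_j}}\, \multop{\sectionpoi_j}\, \BGG{\lambda}{w}{w'}
  \;=\; \multop{\sum_{j} |\sectionpoi_j|^2}\, \BGG{\lambda}{w}{w'}
  \;=\; \BGG{\lambda}{w}{w'},
\]
by the partition-of-unity identity $\sum_j |\sectionpoi_j|^2 = 1$, yielding the claimed congruence modulo $\scrK[\alpha]$.

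The only real obstacle is careful bookkeeping: tracking how the $\chiM$-labels shift by $\pm\rho$ under the various multiplication operators, and confirming that the ``commutator'' of $\intertwiner{\lambda+\rho}{w}{w'}$ with $\multop{\sectionpoi_j}$, which is literally a comparison between two operators acting on different source Hilbert spaces, is exactly the kind of expression controlled by Lemma \ref{lem:F_f_commute}. Everything else is a Leibniz-style unpacking of the definitions.
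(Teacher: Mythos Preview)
Your proof is correct and is precisely the unpacking of the paper's one-line proof, which reads in its entirety: ``This is an immediate consequence of Lemma \ref{lem:F_f_commute}.'' Your two structural observations---that $\BGG{\lambda}{w}{w'}$ and $\intertwiner{\lambda+\rho}{w}{w'}$ are the same word in $\Phase{X_i}$, $\Phase{Y_i}$ acting on spaces whose $\chiM$-labels differ by $\rho$, and that conjugation by $\multop{\sectionpoi_j}$ implements exactly this shift---together with the Leibniz-rule commutation through Lemma \ref{lem:F_f_commute}, are exactly what the paper leaves implicit.
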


\begin{proof}
This is an immediate consequence of Lemma \ref{lem:F_f_commute}.
\end{proof}

\begin{lemma}
\label{lem:F2-1_in_K}
If $\edge[\alpha]{w}{w'}$, then  $\BGG{\lambda}{w'}{w} \BGG{\lambda}{w}{w'}-1 \in \scrK[\alpha]$.
\end{lemma}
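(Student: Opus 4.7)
The plan is to split on whether the edge $\edge[\alpha]{w}{w'}$ is simple ($\alpha=\alpha_i$) or non-simple ($\alpha=\rho$).

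\textbf{Simple edges.} Fix a simple edge $\edge[\alpha_i]{w}{w'}$ and a partition of unity $\{\sectionpoi_j\} \subset \CXE[\rho]$ with $\sum_j |\sectionpoi_j|^2 = 1$ as in Lemma~\ref{lem:partition_of_unity}. Applying Lemma~\ref{lem:po1_definition} to both $\BGG{\lambda}{w}{w'}$ and $\BGG{\lambda}{w'}{w}$ and multiplying (and using that $\multop{\sectionpoi_k}\multop{\overline{\sectionpoi_j}} = \multop{\sectionpoi_k\overline{\sectionpoi_j}}$),
\[
\BGG{\lambda}{w'}{w}\BGG{\lambda}{w}{w'} \equiv \sum_{j,k} \multop{\overline{\sectionpoi_k}}\, \intertwiner{\lambda+\rho}{w'}{w}\, \multop{\sectionpoi_k\overline{\sectionpoi_j}}\, \intertwiner{\lambda+\rho}{w}{w'}\, \multop{\sectionpoi_j} \pmod{\scrK[\alpha_i]}.
\]
Now $\sectionpoi_k\overline{\sectionpoi_j} \in C(\scrX)$, so Lemma~\ref{lem:intertwiners_in_A}(ii), combined with the ideal property of Theorem~\ref{thm:lattice_of_ideals}(i), lets me commute $\intertwiner{\lambda+\rho}{w'}{w}$ past $\multop{\sectionpoi_k\overline{\sectionpoi_j}}$ modulo $\scrK[\alpha_i]$. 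Since the simple intertwiners are unitaries with $\intertwiner{\lambda+\rho}{w'}{w} = \intertwiner{\lambda+\rho}{w}{w'}^*$ (Proposition~\ref{prop:intertwiner_formula} together with $\Phase{Y_i} = \Phase{X_i}^*$), their product is the identity, and the double sum collapses to $\sum_{j,k} \multop{|\sectionpoi_k|^2|\sectionpoi_j|^2} = \multop{(\sum_k|\sectionpoi_k|^2)(\sum_j|\sectionpoi_j|^2)} = 1$.

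\textbf{Non-simple edges.} Take $\edge[\rho]{\reflection{\alpha_1}}{\reflection{\alpha_1}\reflection{\alpha_2}}$; the remaining non-simple edges are handled identically. By Definition~\ref{def:BGG_operators}, $\BGG{\lambda}{\reflection{\alpha_1}}{\reflection{\alpha_1}\reflection{\alpha_2}} = ABC$, where $A$, $B$, $C$ are simple BGG operators on edges of types $\alpha_1$, $\alpha_2$, $\alpha_1$, respectively, and $\BGG{\lambda}{\reflection{\alpha_1}\reflection{\alpha_2}}{\reflection{\alpha_1}} = C^*B^*A^*$. The simple case already proved gives $A^*A = 1 + K_A$, $B^*B = 1 + K_B$, $C^*C = 1 + K_C$ with $K_A, K_C \in \scrK[\alpha_1]$ and $K_B \in \scrK[\alpha_2]$. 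Peeling outwards,
\[
C^*B^*A^*ABC - 1 = K_C + C^*K_BC + C^*B^*K_ABC.
\]
Each remainder belongs to $\scrK[\alpha_1] + \scrK[\alpha_2] = \scrK[\rho]$, since $B, C \in \scrA$ by Lemma~\ref{lem:BGG_in_A}(i) and each $\scrK[\alpha_i]$ is a two-sided ideal in $\scrA$ by Theorem~\ref{thm:lattice_of_ideals}(i).

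No part of the argument looks intrinsically hard: the real content is confined to the simple case, where the $\rho$-shift between the BGG operator and the genuinely unitary intertwiner $\intertwiner{\lambda+\rho}{w}{w'}$ is absorbed by the partition-of-unity trick together with the commutator estimate of Lemma~\ref{lem:intertwiners_in_A}(ii). The non-simple case is bookkeeping, provided one tracks carefully which of $\scrK[\alpha_1]$, $\scrK[\alpha_2]$ each error term inhabits before collapsing into $\scrK[\rho]$.
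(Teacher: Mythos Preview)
Your proof is correct. For the simple edges you do exactly what the paper does: invoke Lemma~\ref{lem:po1_definition} twice, commute the multiplication by the scalar function $\sectionpoi_k\overline{\sectionpoi_j}$ past an intertwiner using Lemma~\ref{lem:intertwiners_in_A}(ii), and collapse via unitarity of the intertwiners and the partition-of-unity identity.

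For the non-simple edges you take a genuinely different route. The paper observes that Lemmas~\ref{lem:po1_definition} and~\ref{lem:intertwiners_in_A}(ii) are stated for \emph{all} edges $\edge[\alpha]{w}{w'}$ (including $\alpha=\rho$), and that the non-simple intertwiners are still unitary with $\intertwiner{\lambda+\rho}{w'}{w}=\intertwiner{\lambda+\rho}{w}{w'}^*$; so the exact same three-line computation goes through uniformly, with the errors landing in $\scrK[\rho]$. You instead factor the non-simple BGG operator as a product $ABC$ of simple ones and telescope using the simple case already proved. Both arguments are valid; the paper's uniform treatment is more economical and highlights that nothing special happens at the non-simple edges, while your version has the virtue of making explicit that the non-simple case is a formal consequence of the simple one.
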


\begin{proof}
Let $\sectionpoi_1,\ldots,\sectionpoi_k\in\CXE[\rho]$ be as in the previous lemma.  By Lemmas \ref{lem:po1_definition} and \ref{lem:intertwiners_in_A},
\begin{eqnarray*}
  \BGG{\lambda}{w'}{w} \BGG{\lambda}{w}{w'} 
   &\equiv& \sum_{j,j'} \multop{\overline{\sectionpoi_j}} \intertwiner{\lambda+\rho}{w'}{w'} \multop{\sectionpoi_j \overline{\sectionpoi_{j'}}} \intertwiner{\lambda+\rho}{w}{w'} \multop{\sectionpoi_{j'}} \pmod{\scrK[\alpha]}\\
   &\equiv& \sum_{j,j'} \multop{\overline{\sectionpoi_j}} \intertwiner{\lambda+\rho}{w'}{w} \intertwiner{\lambda+\rho}{w}{w'}  \multop{\sectionpoi_j \overline{\sectionpoi_{j'}}}\multop{\sectionpoi_{j'}} \pmod{\scrK[\alpha]}\\
   &=& \sum_{j,j'} \multop{\overline{\sectionpoi_j} \sectionpoi_j \overline{\sectionpoi_{j'}} \sectionpoi_{j'}} \\
   &=& 1.
\end{eqnarray*}
\end{proof}

%Recall that the intertwiners $\intertwiner{\lambda}{w}{w'}$ form a commuting diagram (Remark \ref{rmk:commuting_diagram_of_intertwiners}).  This is not true for the normalized BGG operators, although the following weaker property holds.

\begin{lemma}
\label{lem:diagram_commutes}
The diagram of normalized BGG operators
\begin{equation}
\label{eq:BGG_diagram}
  \xymatrix@!C=5ex{
    & \LXE[\reflection{\alpha_1}\shiftedaction\lambda] \ar@{<->}[ddrr] \ar@{<->}[rr] && \LXE[\reflection{\alpha_1}\reflection{\alpha_2}\shiftedaction\lambda] \ar@{<->}[dr] \\
   \LXE[\lambda] \ar@{<->}[ur] \ar@{<->}[dr] &&&& \LXE[w_\rho\shiftedaction\lambda] \\
    & \LXE[\reflection{\alpha_2}\shiftedaction\lambda] \ar@{<->}[uurr] \ar@{<->}[rr] && \LXE[\reflection{\alpha_2}\reflection{\alpha_1}\shiftedaction\lambda] \ar@{<->}[ur] 
  }
\end{equation}
commutes modulo $\scrK[\rho]$.
\end{lemma}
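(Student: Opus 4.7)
The plan is to reduce commutativity of \eref{eq:BGG_diagram} modulo $\scrK[\rho]$ to the exact commutativity of the intertwiner diagram \eref{eq:simple_intertwiners} from Remark \ref{rmk:commuting_diagram_of_intertwiners}. I fix a partition of unity $\sectionpoi_1,\dots,\sectionpoi_k$ as in Lemma \ref{lem:partition_of_unity} (with weight adapted to each situation), satisfying $\sum_j |\sectionpoi_j|^2 = 1$.

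The core step is to establish, by induction on the length of a path of simple edges $w_0 \xrightarrow{\alpha_{i_1}} w_1 \xrightarrow{\alpha_{i_2}} \cdots \xrightarrow{\alpha_{i_n}} w_n$ in the Weyl graph, the identity
\begin{equation*}
\BGG{\lambda}{w_{n-1}}{w_n} \cdots \BGG{\lambda}{w_0}{w_1} \equiv \sum_j \multop{\overline{\sectionpoi_j}} \bigl(\intertwiner{\lambda+\rho}{w_{n-1}}{w_n} \cdots \intertwiner{\lambda+\rho}{w_0}{w_1}\bigr) \multop{\sectionpoi_j} \pmod{\scrK[\rho]}.
\end{equation*}
The base case $n=1$ is exactly Lemma \ref{lem:po1_definition} (since $\scrK[\alpha_{i_1}] \subseteq \scrK[\rho]$). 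For the inductive step, I apply Lemma \ref{lem:po1_definition} to the final edge and the inductive hypothesis to the initial sub-path, then collapse the adjacent sums using $\sum_j |\sectionpoi_j|^2 = 1$. To do so, I must commute $\multop{\sectionpoi_{j'}\overline{\sectionpoi_j}}$ past the product of intertwiners modulo $\scrK[\rho]$; this is achieved by a telescoping expansion of the commutator, each term of which lies in some $\scrK[\alpha_i] \subseteq \scrK[\rho]$ by Lemma \ref{lem:intertwiners_in_A}(ii). That these terms remain in $\scrK[\rho]$ after being multiplied on either side by the surrounding bounded operators follows from the fact that each $\scrK[\alpha_i]$ is an ideal in $\scrA$ (Theorem \ref{thm:lattice_of_ideals}(i)) together with Lemmas \ref{lem:intertwiners_in_A}(i) and \ref{lem:mult_ops_in_A}, which place all the relevant operators in $\scrA$.

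By Definition \ref{def:BGG_operators}, every $\rho$-labeled BGG operator is a composition of three simple BGG operators, and by Equation \eref{eq:non-simple_intertwiners} the corresponding non-simple intertwiner is the identical composition of simple intertwiners (with the adjoint-defined cases handled by taking adjoints of both sides). Hence the displayed identity applies to every edge in \eref{eq:BGG_diagram}, and by concatenation to every directed path. Given any two directed paths in \eref{eq:BGG_diagram} with common endpoints, each composition is congruent modulo $\scrK[\rho]$ to $\sum_j \multop{\overline{\sectionpoi_j}} J \multop{\sectionpoi_j}$ for the same operator $J$, because by Remark \ref{rmk:commuting_diagram_of_intertwiners} any two compositions of Duflo intertwiners along paths with common endpoints agree on the nose.

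The main obstacle is the bookkeeping in the inductive step: verifying that when one pushes a multiplication operator through a composition of intertwiners, the accumulated error remains in $\scrK[\rho]$ rather than merely in some a priori larger operator space. Everything else is then essentially formal once the telescoping identity is in place.
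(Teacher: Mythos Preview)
Your proof is correct and follows essentially the same approach as the paper: both reduce the commutativity of \eref{eq:BGG_diagram} to that of the intertwiner diagram \eref{eq:simple_intertwiners} by expressing each BGG composition as $\sum_j \multop{\overline{\sectionpoi_j}} J \multop{\sectionpoi_j}$ modulo $\scrK[\rho]$, via Lemma \ref{lem:po1_definition} and the commutator estimate of Lemma \ref{lem:intertwiners_in_A}(ii). The paper only writes out the two-edge case explicitly (which suffices, since every elementary face of \eref{eq:BGG_diagram} compares two length-two paths), whereas your induction on path length is a slightly more thorough packaging of the same computation.
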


\begin{proof}
For adjacent edges $w\xleftrightarrow{\alpha} w' \xleftrightarrow{\alpha'} w''$, a calculation analogous to that of the previous proof gives
\begin{equation*}
  \BGG{\lambda}{w'}{w''} \BGG{\lambda}{w}{w'}
   \equiv \sum_{j,j'} \multop{\overline{\sectionpoi_j}} \left(\intertwiner{\lambda+\rho}{w'}{w''} \intertwiner{\lambda+\rho}{w}{w'}\right) \multop{\sectionpoi_{j} \overline{\sectionpoi_{j'}}}\multop{\sectionpoi_{j'}} 
   \pmod{\scrK[\alpha']}.
\end{equation*}
Note that $\scrK[\alpha']\subseteq\scrK[\rho]$.  The commutativity of \eref{eq:BGG_diagram} modulo $\scrK[\rho]$ is therefore a consequence of the commutativity of the corresponding diagram of intertwiners $\intertwiner{\lambda+\rho}{w}{w'}$ (Remark \ref{rmk:commuting_diagram_of_intertwiners}).
\end{proof}

\begin{lemma}
\label{lem:F_g_commute}
Let $\edge[\alpha]{w}{w'}$.  For any $g\in G$, 
\begin{equation}
\label{eq:F_g_commute}
  U_{w'\shiftedaction\lambda}(g) \BGG{\lambda}{w}{w'} U_{w\shiftedaction\lambda}(g^{-1}) - \BGG{\lambda}{w}{w'} \; \in \;
      \scrK[\alpha] 
\end{equation}
\end{lemma}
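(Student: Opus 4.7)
The strategy is to use Lemma \ref{lem:po1_definition} to replace $\BGG{\lambda}{w}{w'}$ (modulo $\scrK[\alpha]$) with a sum involving the genuine Duflo intertwiner $\intertwiner{\lambda+\rho}{w}{w'}$, which \emph{is} exactly $\Lie{G}$-equivariant between the unshifted principal series $U_{w(\lambda+\rho)}$ and $U_{w'(\lambda+\rho)}$ (by Proposition \ref{prop:intertwiner_formula} for simple edges, and by composition for non-simple edges). The shift by $\rho$ is absorbed into multiplication operators $M_{\sectionpoi_j}$, $M_{\overline{\sectionpoi_j}}$ on either side, which convert between the shifted and unshifted induced bundles. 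Conjugation by $U(g)$ then passes through the intertwiner unchanged and only affects the multiplication operators.

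Concretely, I would fix a partition of unity $\sectionpoi_1,\ldots,\sectionpoi_k$ with $\sum_j|\sectionpoi_j|^2 = 1$ from Lemma \ref{lem:partition_of_unity}, interpreting each $\sectionpoi_j$ as a section of $L_{\rho\oplus 0}$ so that the associated $\Lie{G}$-action has trivial $\Lie{A}$-character. By Lemma \ref{lem:po1_definition},
\begin{equation*}
  \BGG{\lambda}{w}{w'} \equiv \sum_j M_{\overline{\sectionpoi_j}}\, \intertwiner{\lambda+\rho}{w}{w'}\, M_{\sectionpoi_j} \pmod{\scrK[\alpha]},
\end{equation*}
where the three factors map $\LXE[w\shiftedaction\lambda] \to \LXE[w(\lambda+\rho)] \to \LXE[w'(\lambda+\rho)] \to \LXE[w'\shiftedaction\lambda]$. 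Conjugating by $U(g)$, using the exact equivariance of the middle factor together with the covariance formula \eref{eq:covariance_of_multops}, and noting that $\scrK[\alpha]$ is preserved under conjugation by the $\Lie{G}$-continuous multipliers $U(g)$ from $\scrA$ (Proposition \ref{prop:U(g)_in_A}), I would arrive at
\begin{equation*}
  U_{w'\shiftedaction\lambda}(g)\, \BGG{\lambda}{w}{w'}\, U_{w\shiftedaction\lambda}(g^{-1}) \equiv \sum_j M_{g\cdot\overline{\sectionpoi_j}}\, \intertwiner{\lambda+\rho}{w}{w'}\, M_{g\cdot\sectionpoi_j} \pmod{\scrK[\alpha]}.
\end{equation*}

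The final step is to recognise $\{g\cdot\sectionpoi_j\}$ as another partition of unity of the same form, and then re-apply Lemma \ref{lem:po1_definition} in reverse. The key observation is that the $\Lie{A}$-character of $\sectionpoi_j$ vanishes and $e^{\rho}|_\Lie{M}$ is unitary, so complex conjugation commutes with the $\Lie{G}$-action: $\overline{g\cdot\sectionpoi_j} = g\cdot\overline{\sectionpoi_j}$ pointwise on $\Lie{K}$. Consequently, the pointwise product $(g\cdot\sectionpoi_j)(g\cdot\overline{\sectionpoi_j})$ equals the $\Lie{G}$-translate of the scalar function $|\sectionpoi_j|^2 \in C(\scrX)$, and $\sum_j|g\cdot\sectionpoi_j|^2 = g\cdot 1 = 1$. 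Applying Lemma \ref{lem:po1_definition} to this new partition of unity identifies the right-hand sum with $\BGG{\lambda}{w}{w'}$ modulo $\scrK[\alpha]$, closing the argument.

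The main obstacle is bookkeeping rather than hard analysis: one must keep straight which Hilbert space plays which role and, in particular, which $\Lie{G}$-action is used on each $\sectionpoi_j$. The central subtlety—that the shifted and unshifted representations differ by an $\Lie{A}$-character—is precisely what forces the choice of trivial $\Lie{A}$-character $\sectionpoi_j \in C(\scrX;L_{\rho\oplus 0})$, and it is this choice that makes complex conjugation $\Lie{G}$-equivariant and thus lets the partition-of-unity property survive translation by $g$.
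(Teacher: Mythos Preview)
Your proposal is correct and follows essentially the same route as the paper's proof: apply Lemma~\ref{lem:po1_definition}, use the exact $\Lie{G}$-equivariance of $\intertwiner{\lambda+\rho}{w}{w'}$ together with the covariance formula~\eref{eq:covariance_of_multops} to pass $U(g)$ through, and then recognise $\{g\cdot\sectionpoi_j\}$ as another admissible partition of unity. Your discussion of why $\overline{g\cdot\sectionpoi_j}=g\cdot\overline{\sectionpoi_j}$ (via the trivial $\Lie{A}$-character on $L_{\rho\oplus0}$) makes explicit a point the paper leaves implicit, and your appeal to Proposition~\ref{prop:U(g)_in_A} for preservation of $\scrK[\alpha]$ under conjugation is exactly the mechanism the paper uses.
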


\begin{proof}
We first note that if $A$ is a $\Lie{G}$-continuous operator, then so is $g.A.g^{-1}$.
Let $\sectionpoi_1,\ldots,\sectionpoi_k\in\CXE[\rho]$ be as in Lemma \ref{lem:po1_definition}.  Then,
\begin{eqnarray}
\lefteqn{U_{w'\shiftedaction\lambda}(g) \BGG{\lambda}{w}{w'} U_{w\shiftedaction\lambda}(g^{-1})} \qquad \nonumber\\
 &\equiv& \sum_{j} U_{w'\shiftedaction\lambda}(g) \multop{\overline{\sectionpoi_j}} \intertwiner{\lambda+\rho}{w}{w'} \multop{\sectionpoi_j} U_{w\shiftedaction\lambda}(g^{-1}) \nonumber \pmod{\scrK[\alpha]}  \\
 &=& \sum_{j} U_{w'\shiftedaction\lambda}(g) \multop{\overline{\sectionpoi_j}}U_{w'(\lambda+\rho)}(g^{-1}) \intertwiner{\lambda+\rho}{w}{w'} U_{w(\lambda+\rho)}(g) \multop{\sectionpoi_j} U_{w\shiftedaction\lambda}(g^{-1}) \nonumber \\
 &=& \sum_{j} \multop{\overline{g\cdot \sectionpoi_j}} \intertwiner{\lambda+\rho}{w}{w'} \multop{g\cdot \sectionpoi_j}.
 \label{eq:F_g_commute_computation}
\end{eqnarray}
Since $\sum_{j=1}^k |g\cdot \sectionpoi_j|^2 =  1$, Lemma \ref{lem:po1_definition} shows that \eref{eq:F_g_commute_computation} equals $\BGG{\lambda}{w}{w'}$ modulo $\scrK[\alpha]$.

\end{proof}

%----------------------------------------------------------------------

\subsection{Construction of the gamma element}
\label{sec:gamma}

Fix a dominant weight $\lambda$.  Let $H_\lambda \defeq \bigoplus_{w\in\Weylgroup} \LXE[w\shiftedaction \lambda]$.  For each $w\in\Weylgroup$, let $\component{w}$ denote the orthogonal projection onto the summand $\LXE[w \shiftedaction \lambda]$ of $H_\lambda$.  We put a grading on $H_\lambda$ by declaring $\LXE[w\shiftedaction \lambda]$ to be even or odd according to the parity of $\length(w)$.  

For $f\in C(\scrX)$, $M_f$ will denote the multiplication operator on $H_\lambda$, acting diagonally on the summands.  We let  $U$ denote the diagonal representation $\oplus_{w\in\Weylgroup} U_{w\shiftedaction\lambda}$ of $\Lie{G}$.  For each $\edge{w}{w'}$, we extend the normalized BGG operator $\BGG{\lambda}{w}{w'}:\LXE[w\shiftedaction\lambda] \to \LXE[w'\shiftedaction\lambda]$ to an operator $\BGGextended{\lambda}{w}{w'}:H_\lambda \to H_\lambda$ by defining it to be zero on the components $\LXE[w''\shiftedaction \lambda]$ with $w''\neq w$. 

For the remainder of this section, we use $\scrK[\alpha]$, $\scrA$, $\scrK$, $\scrL$ to denote $\scrK[\alpha](H_\lambda)$, $\scrA (H_\lambda) $, $\scrK (H_\lambda) $, $\scrL (H_\lambda)$.

\begin{lemma}[Kasparov Technical Theorem]
\label{lem:KTT}
There exist positive $\Lie{G}$-continuous operators $N_1,N_2\in \scrL$ with the following properties:
\begin{enumerate}
\item $N_1^2+N_2^2 = 1$,
\item $N_i \cdot\scrK[\alpha_i] \subseteq \scrK$ for each $i=1,2$,
\item $N_i$ commutes modulo compact operators with
\begin{itemize}
  \item $\multop{f}$ for all $f\in C(\scrX)$,
  \item $U(g)$ for all $g\in\Lie{G}$,
  \item the normalized BGG operators $\BGGextended{\lambda}{w}{w'}$, for all $\edge{w}{w'}$,
\end{itemize}
\item $N_i$ commutes on the nose with $U(k)$ for all $k\in\Lie{K}$,
\item $N_i$ commutes on the nose with the projections $\component{w}$  for all $w\in\Weylgroup$, {\em i.e.}, $N_i$ is diagonal with respect to the direct sum decomposition of $H_\lambda$.
\end{enumerate}
\end{lemma}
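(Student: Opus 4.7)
The plan is a standard application of the Kasparov Technical Theorem (KTT) to the pair of ideals $\scrK[\alpha_1], \scrK[\alpha_2]$ inside $\scrA=\scrA(H_\lambda)$, followed by $\Lie{K}$- and $\Weylgroup$-averaging to enforce the strong invariance, and the square-root trick $N_1=\sqrt{M}$, $N_2=\sqrt{1-M}$.

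First I assemble the KTT input. By Theorem \ref{thm:lattice_of_ideals}, $\scrK[\alpha_i]$ are ideals in $\scrA$ with $\scrK[\alpha_1]\cap\scrK[\alpha_2]=\scrK$; the stronger statement $\scrK[\alpha_1]\cdot\scrK[\alpha_2]\subseteq\scrK$ follows by the harmonically-finite approximation argument already used inside the proof of Lemma \ref{lem:Ji_is_Ki}, combined with the compactness of $p_{\sigma_1}p_{\sigma_2}$ for $\sigma_i\in\Khat_i$. Both ideals are $\sigma$-unital on the separable Hilbert space $H_\lambda$. As ``derivation set'' I take a countable norm-separable subspace $\Delta\subseteq\scrA$ spanned by $\multop{f}$ for $f$ in a dense countable subset of $C(\scrX)$, by $U(g)$ for $g$ in a dense countable subset of $\Lie{G}$, and by the finite collection of extended BGG operators $\BGGextended{\lambda}{w}{w'}$ indexed by $\edge{w}{w'}$. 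Each of these lies in $\scrA$ (by Lemma \ref{lem:mult_ops_in_A}, Proposition \ref{prop:U(g)_in_A}, and Lemma \ref{lem:BGG_in_A}, respectively), so the derivation condition $[\Delta,\scrK[\alpha_i]]\subseteq\scrK[\alpha_i]$ is automatic. Carrying out the whole construction inside the $\Lie{G}$-continuous subcategory of $\scrA$ (per the convention of Section \ref{sec:G-continuity}), KTT yields a $\Lie{G}$-continuous element $M\in\scrA$, $0\leq M\leq 1$, with $M\cdot\scrK[\alpha_1]\subseteq\scrK$, $(1-M)\cdot\scrK[\alpha_2]\subseteq\scrK$, and $[M,T]\in\scrK$ for each $T\in\Delta$; norm-continuity then upgrades the commutation to all of $\multop{f}$, $U(g)$ and $\BGGextended{\lambda}{w}{w'}$.

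To impose the exact invariance properties (iv) and (v), I replace $M$ first by $\int_\Lie{K}U(k)MU(k)^{-1}\,dk$ and then by $\sum_{w\in\Weylgroup}\component{w}M\component{w}$. Each operation sends positive contractions to positive contractions; they commute, since $U$ acts diagonally on $H_\lambda$ and hence every $\component{w}$ commutes with every $U(k)$; and both preserve (ii) and (iii), because $U(k)$ and the $\component{w}$ lie in $\scrA$ and conjugation by them normalises both $\scrK[\alpha_i]$ and $\scrK$. Setting $N_1\defeq\sqrt{M}$ and $N_2\defeq\sqrt{1-M}$ for this averaged $M$, property (i) is immediate, (iv) and (v) follow from continuous functional calculus, and (ii) is a standard hereditary argument: for $K\in\scrK[\alpha_1]$ one has $(N_1K)^*(N_1K)=K^*MK\in\scrK$, so $N_1K\in\scrK$, and analogously for $N_2$.

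The main point requiring some care is preserving $\Lie{G}$-continuity through the averaging. Compact-group integration manifestly preserves it, and $\Weylgroup$-averaging does too because each $\component{w}$ commutes with $U(g)$ for \emph{every} $g\in\Lie{G}$ (the $\Lie{G}$-representation being block-diagonal by construction of $H_\lambda$). The continuous functional calculus then transmits $\Lie{G}$-continuity to $N_1$ and $N_2$.
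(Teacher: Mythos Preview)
Your proposal is correct and follows essentially the same approach as the paper: invoke the Kasparov Technical Theorem (the paper simply cites Blackadar, Theorem 20.1.5) and then average---over $\Lie{K}$ for (iv), and over the finite group of sign-unitaries $\sum_w \pm\component{w}$ for (v), which is exactly your pinching $M\mapsto\sum_w\component{w}M\component{w}$. If anything, your write-up is more careful than the paper's terse sketch, since you average $M$ before taking square roots (so that $N_1^2+N_2^2=1$ is automatic) and explicitly verify that the averaging preserves (ii) and (iii).
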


Note also that  $N_1$ and $N_2$ commute, by \emph{(i)}.

\begin{proof}
See \cite[Theorem 20.1.5]{Blackadar}.  The $\Lie{K}$-invariance of (iv) is obtained by averaging over the $\Lie{K}$-translates $U(k)\, N_i\, U(k^{-1})$ of $N_i$.  Also, the operators $\sum_w \pm Q_w$ (taking all possible choices of signs) form a finite group of unitaries, so that a similar averaging trick gives property (v).
\end{proof}

\begin{lemma}
\label{lem:operator_po1}
There exist mutually commuting operators $\poI{w}{w'} \in \scrL$, indexed by the edges of the graph \eref{eq:Weyl_graph}, with the following properties:
\begin{enumerate}
\item $\poI{w}{w'} = \poI{w'}{w}$
\item If $\edge[\alpha]{w}{w'}$ for $\alpha\in\{\alpha_1,\alpha_2,\rho\}$, then  $\poI{w}{w'} \scrK[\alpha] \subseteq \scrK$.
\item If $w\leftrightarrow w' \leftrightarrow w''$ with $w\neq w''$ then $\poI{w'}{w''}\poI{w}{w'} \scrK[\rho] \subseteq \scrK$.
\item For any $w,w''\in\Weylgroup$, $\sum_{w'} \poI{w'}{w''}\poI{w}{w'} =\delta_{w,w''}$, where the sum is over $w'$ such that $w \leftrightarrow w' \leftrightarrow w''$.
\item $\poI{w}{w'}$ satisfies {(iii)}, {(iv)} and{(v)} of Lemma \ref{lem:KTT}.
\end{enumerate}
\end{lemma}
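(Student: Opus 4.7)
The plan is to construct the operators $\poI{w}{w'}$ as explicit polynomials in $N_1$ and $N_2$, supplied by Lemma~\ref{lem:KTT}. Since $N_1^2 + N_2^2 = 1$ with the $N_i$ positive, we have $N_2 = (1-N_1^2)^{1/2}$, so the $N_i$ generate a commutative $C^*$-algebra and any polynomial combinations automatically mutually commute. Moreover, the properties listed in (iii)--(v) of Lemma~\ref{lem:KTT} are preserved under sums and products, so any such polynomial assignment will automatically inherit part~(v) of this lemma (along with the $\Lie{G}$-continuity tacitly required by Section~\ref{sec:G-continuity}). The work therefore lies entirely in (i)--(iv).

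The key observation is that the label of an edge should match the factor of $N_i$ that kills the corresponding ideal: an $\alpha_i$-labelled edge carries a factor of $N_i$, and a $\rho$-labelled edge carries the product $N_1 N_2$. Reading off the edges of \eref{eq:Weyl_graph}, I would set
\begin{align*}
\poI{1}{\reflection{\alpha_1}} &= N_1, & \poI{1}{\reflection{\alpha_2}} &= N_2, \\
\poI{\reflection{\alpha_1}}{\reflection{\alpha_1}\reflection{\alpha_2}} &= N_1 N_2, & \poI{\reflection{\alpha_1}}{\reflection{\alpha_2}\reflection{\alpha_1}} &= N_2^2, \\
\poI{\reflection{\alpha_2}}{\reflection{\alpha_1}\reflection{\alpha_2}} &= -N_1^2, & \poI{\reflection{\alpha_2}}{\reflection{\alpha_2}\reflection{\alpha_1}} &= -N_1 N_2, \\
\poI{\reflection{\alpha_1}\reflection{\alpha_2}}{w_\rho} &= N_2, & \poI{\reflection{\alpha_2}\reflection{\alpha_1}}{w_\rho} &= -N_1,
\end{align*}
together with $\poI{w}{w'} = \poI{w'}{w}$ by convention, which makes (i) trivial. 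Property (ii) is then immediate from $N_i \cdot \scrK[\alpha_i] \subseteq \scrK$: on $\rho$-edges the factor $N_1 N_2$ multiplies both $\scrK[\alpha_1]$ and $\scrK[\alpha_2]$ into $\scrK$, hence sends the sum $\scrK[\rho]$ there. Property (iii) holds because every length-two path with distinct endpoints produces a product $\poI{w'}{w''}\poI{w}{w'}$ of the form $\pm N_1^a N_2^b$ with both $a\geq 1$ and $b\geq 1$, as one verifies by a quick case check on the eight edges; such an operator sends $\scrK[\alpha_1] + \scrK[\alpha_2]$ into $\scrK$.

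Property (iv) then reduces to a finite polynomial identity in $N_1, N_2$. For $w = w''$, the sum of squares at each vertex telescopes via $N_1^2 + N_2^2 = 1$; for example at $\reflection{\alpha_1}$ one gets $N_1^2 + (N_1 N_2)^2 + (N_2^2)^2 = N_1^2 + N_2^2(N_1^2 + N_2^2) = 1$. For $w \neq w''$, one checks the six pairs of vertices at graph-distance two: in each case the chosen signs cause the contributions from the common neighbours to cancel, using $N_1^2 + N_2^2 = 1$ where necessary. The most intricate case is the pair $(\reflection{\alpha_1}\reflection{\alpha_2}, \reflection{\alpha_2}\reflection{\alpha_1})$ with its three common neighbours $\reflection{\alpha_1}, \reflection{\alpha_2}, w_\rho$, where the sum evaluates to $N_1 N_2(N_2^2 + N_1^2 - 1) = 0$.

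The main obstacle --- and the reason the lemma is stated separately rather than folded into Lemma~\ref{lem:KTT} --- is choosing signs that make the off-diagonal part of (iv) vanish. A purely positive assignment is doomed: the two common neighbours of $1$ and $\reflection{\alpha_1}\reflection{\alpha_2}$ would each contribute a positive multiple of $N_1^2 N_2$, whose sum cannot be zero. One is therefore forced into a system of $\{\pm 1\}$ constraints, one per pair of distance-two vertices, whose simultaneous solvability pivots on the three-neighbour pair $(\reflection{\alpha_1}\reflection{\alpha_2}, \reflection{\alpha_2}\reflection{\alpha_1})$, where the identity $N_1^2 + N_2^2 = 1$ must be invoked once more to close the system.
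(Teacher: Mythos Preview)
Your proof is correct and follows essentially the same approach as the paper: an explicit assignment of signed monomials in $N_1,N_2$ to the edges of the Weyl graph, with the signs chosen so that the off-diagonal sums in (iv) cancel via $N_1^2+N_2^2=1$. Your sign convention differs from the paper's (the paper negates the four middle edges attached to $\reflection{\alpha_1}$ and the edge $\reflection{\alpha_1}\reflection{\alpha_2}\leftrightarrow w_\rho$, whereas you negate the ones attached to $\reflection{\alpha_2}$ and the edge $\reflection{\alpha_2}\reflection{\alpha_1}\leftrightarrow w_\rho$), but both choices satisfy all the required identities.
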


\begin{remark}
To clarify a possibly misleading notational point, $\poI{w}{w'}$ does not designate an operator between $\LXE[w\shiftedaction\lambda]$ and $\LXE[w'\shiftedaction\lambda]$.  Rather it is an operator on $H_\lambda$ which we will use to modify the operator $\BGG{\lambda}{w}{w'}$.
\end{remark}

\begin{proof}
With $N_1, N_2$ as in the previous lemma, assign operators $\poI{w}{w'}$ to each arrow as follows:
$$
  \xymatrix@!C{
    & \stackrel{\reflection{\alpha_1}}{\bullet} \ar@{<->}[rr]^{-N_1N_2} \ar@{<->}[ddrr]^(0.7){-N_2^2} 
      && \stackrel{\reflection{\alpha_1}\reflection{\alpha_2}}{\bullet} \ar@{<->}[dr]^{-N_2} \\
    \stackrel{1}{\bullet} \ar@{<->}[ur]^{N_1} \ar@{<->}[dr]_{N_2} 
      &&&& \stackrel{w_\rho}{\bullet} \\
    & \stackrel{\reflection{\alpha_2}}{\bullet} \ar@{<->}[rr]_{N_1N_2} \ar@{<->}[uurr]_(0.7){N_1^2} 
      && \stackrel{\reflection{\alpha_2}\reflection{\alpha_1}}{\bullet} \ar@{<->}[ur]_{N_1} 
  }
$$
The asserted properties can be easily checked using the properties of $N_1$ and $N_2$ from Lemma \ref{lem:KTT} and the diagram \eref{eq:Weyl_graph}.  It is worth noting particularly that $N_1N_2$ multiplies $\scrK[\rho]$ into the compact operators. 
\end{proof}

\begin{definition}
\label{def:Fredholm_components}
Define $\poIBGG{\lambda}{w}{w'} \defeq \poI{w}{w'} \BGGextended{\lambda}{w}{w'}$.
\end{definition}

\begin{lemma}
\label{lem:compact_commutators}
For any $\edge{w}{w'}$,
\begin{enumerate}
\item $\poIBGG{\lambda}{w}{w'} - \poIBGG{\lambda}{w'}{w}^*\in \scrK$.
\item $[\poIBGG{\lambda}{w}{w'}, M_f] \in \scrK$, for any $f\in C(\scrX)$,
\item $U(g)\,\poIBGG{\lambda}{w}{w'}\,U(g^{-1}) - \poIBGG{\lambda}{w}{w'} \in \scrK$, for any $g\in\Lie{G}$,
\item $\poIBGG{\lambda}{w}{w'}$ is $\Lie{K}$-invariant, {\em ie}, $[\poIBGG{\lambda}{w}{w'}, U(k)] =0$, for any $k\in\Lie{K}$,
\item $\poIBGG{\lambda}{w}{w'}$ is $\Lie{G}$-continuous.
\end{enumerate}
Also,
\begin{enumerate}
\setcounter{enumi}{5}
\item For any $w,w''\in\Weylgroup$, $\left( \sum_{w'} \poIBGG{\lambda}{w'}{w''} \poIBGG{\lambda}{w}{w'} \right) \equiv \delta_{w,w''} \component{w} \pmod{\scrK}$, where the sum is over $w'\in\Weylgroup$ such that $w\leftrightarrow w' \leftrightarrow w''$.

\end{enumerate}
\end{lemma}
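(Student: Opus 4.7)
The plan is to handle parts (i)--(v) as essentially formal consequences of the Leibniz rule, using the constituent properties of $\poI{w}{w'}$ (from Lemma \ref{lem:operator_po1}) and $\BGGextended{\lambda}{w}{w'}$ (from Lemmas \ref{lem:BGG_in_A}, \ref{lem:F2-1_in_K}, \ref{lem:diagram_commutes}, \ref{lem:F_g_commute}), and then to concentrate on the cohomological identity (vi). For (i), each $\poI{w}{w'}$ is self-adjoint (being $\pm$ a product of commuting self-adjoint $N_i$) and by construction $\poI{w}{w'} = \poI{w'}{w}$; also $\BGGextended{\lambda}{w}{w'}^* = \BGGextended{\lambda}{w'}{w}$ since $\Phase{Y_i} = \Phase{X_i}^*$. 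Hence
$$\poIBGG{\lambda}{w}{w'} - \poIBGG{\lambda}{w'}{w}^* \;=\; \poI{w}{w'}\BGGextended{\lambda}{w}{w'} - \BGGextended{\lambda}{w}{w'}\poI{w}{w'} \;=\; [\poI{w}{w'},\BGGextended{\lambda}{w}{w'}],$$
which lies in $\scrK$ by Lemma \ref{lem:KTT}(iii). For (ii) and (iii), expand $[\poI{w}{w'}\BGGextended{\lambda}{w}{w'},\multop{f}]$ and $U(g)\poI{w}{w'}\BGGextended{\lambda}{w}{w'}U(g^{-1}) - \poI{w}{w'}\BGGextended{\lambda}{w}{w'}$ as a sum of two contributions: an ``inner'' one in which the perturbation acts on $\BGGextended{\lambda}{w}{w'}$, lying in $\scrK[\alpha]$ by Lemmas \ref{lem:BGG_in_A}(ii) and \ref{lem:F_g_commute}, which is absorbed into $\scrK$ after left-multiplication by $\poI{w}{w'}$ via Lemma \ref{lem:operator_po1}(ii); and an ``outer'' one in which the perturbation acts on $\poI{w}{w'}$, which is already compact by Lemma \ref{lem:KTT}(iii). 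Parts (iv) and (v) are immediate: the $N_i$ commute with $U|_\Lie{K}$ on the nose (Lemma \ref{lem:KTT}(iv)), while $\BGGextended{\lambda}{w}{w'}$ is $\Lie{K}$-equivariant on the nose because it is built from $\Phase{X_i}$, where $X_i$ acts by right translation and so commutes with left $\Lie{K}$-translation; and $\Lie{G}$-continuity follows because $\poI{w}{w'}$ is polynomial in the $\Lie{G}$-continuous $N_i$, while the order-zero BGG operators are $\Lie{G}$-continuous by the remark of Section \ref{sec:G-continuity}.

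For the main statement (vi), start from
$$\sum_{w'} \poIBGG{\lambda}{w'}{w''}\poIBGG{\lambda}{w}{w'} \;=\; \sum_{w'} \poI{w'}{w''}\,\BGGextended{\lambda}{w'}{w''}\,\poI{w}{w'}\,\BGGextended{\lambda}{w}{w'},$$
and commute $\poI{w}{w'}$ past $\BGGextended{\lambda}{w'}{w''}$ modulo $\scrK$ using Lemma \ref{lem:KTT}(iii). The sum then reduces, modulo $\scrK$, to $\sum_{w'} \poI{w'}{w''}\poI{w}{w'}\cdot\BGGextended{\lambda}{w'}{w''}\BGGextended{\lambda}{w}{w'}$. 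If $w = w''$, then for each $w \xleftrightarrow{\alpha} w'$ Lemma \ref{lem:F2-1_in_K} gives $\BGGextended{\lambda}{w'}{w}\BGGextended{\lambda}{w}{w'} - \component{w} \in \scrK[\alpha]$, and left-multiplication by $\poI{w}{w'}$ pushes the $\scrK[\alpha]$-error into $\scrK$ by Lemma \ref{lem:operator_po1}(ii); the surviving term is $\bigl(\sum_{w'}\poI{w'}{w}\poI{w}{w'}\bigr)\component{w} = \component{w}$, by Lemma \ref{lem:operator_po1}(iv). If $w \neq w''$, then the commutativity of the BGG diagram modulo $\scrK[\rho]$ (Lemma \ref{lem:diagram_commutes}) shows that $\BGGextended{\lambda}{w'}{w''}\BGGextended{\lambda}{w}{w'}$ is independent of the intermediate $w'$ modulo $\scrK[\rho]$; call this common value $T_{w,w''}$. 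The errors are absorbed into $\scrK$ after multiplication by $\poI{w'}{w''}\poI{w}{w'}$ via Lemma \ref{lem:operator_po1}(iii), and the main term becomes $\bigl(\sum_{w'}\poI{w'}{w''}\poI{w}{w'}\bigr)T_{w,w''} = 0$, again by Lemma \ref{lem:operator_po1}(iv).

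The only genuine obstacle is the careful bookkeeping in (vi), and the key structural observation is that Lemma \ref{lem:operator_po1} has been designed so that its three absorption properties (ii), (iii) and the scalar identity (iv) align exactly with the three possible types of error that arise from iterated normalized BGG operators: single-edge $\scrK[\alpha]$-errors from Lemma \ref{lem:F2-1_in_K}, length-two path $\scrK[\rho]$-errors from Lemma \ref{lem:diagram_commutes}, and the need for the free ``combinatorial'' sums to vanish off-diagonally. In particular, the appearance of the product $N_1 N_2$ in every off-diagonal composition $\poI{w'}{w''}\poI{w}{w'}$ (when $w \neq w''$) is exactly what converts the $\scrK[\rho]$-errors into compacts via the lattice property $\scrK[\alpha_1]\cap\scrK[\alpha_2] = \scrK$ encoded in Lemma \ref{lem:operator_po1}(iii); this is the place where the entire construction has been calibrated, and everything else in the proof is formal.
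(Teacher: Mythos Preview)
Your proposal is correct and follows essentially the same route as the paper's proof: Leibniz expansions for (i)--(iii), the obvious $\Lie{K}$-invariance and $\Lie{G}$-continuity arguments for (iv)--(v), and for (vi) the same two-case split using Lemmas \ref{lem:F2-1_in_K} and \ref{lem:diagram_commutes} together with the absorption and cancellation properties of Lemma \ref{lem:operator_po1}. Your references to Lemma \ref{lem:KTT}(iii) for properties of $\poI{w}{w'}$ are technically via Lemma \ref{lem:operator_po1}(v), but you note this at the outset, so the argument is complete.
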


\begin{proof}

Let $\edge[\alpha]{w}{w'}$. By definition, $\BGG{\lambda}{w}{w'} = \BGG{\lambda}{w'}{w}^*$, so $\poIBGG{\lambda}{w}{w'} - \poIBGG{\lambda}{w'}{w}^* = [\poI{w}{w'},\BGGextended{\lambda}{w}{w'}]$, which proves (i).  

Since $\poI{w}{w'}$ commutes modulo compacts with multiplication operators,
$$
  [ \poIBGG{\lambda}{w}{w'} , \multop{f}]
   \equiv \poI{w}{w'} [\BGGextended{\lambda}{w}{w'}, \multop{f}] \pmod{\scrK} 
$$
By Lemma \ref{lem:BGG_in_A}, the latter is in $\poI{w}{w'}\scrK[\alpha] \subseteq \scrK$, which proves (ii).  Similarly, for (iii),
\begin{multline*}
  U(g)\,\poIBGG{\lambda}{w}{w'}\,U(g^{-1}) - \poIBGG{\lambda}{w}{w'} \\
    \equiv \poI{w}{w'} \big( U(g)\,\BGGextended{\lambda}{w}{w'}\,U(g^{-1}) - \BGGextended{\lambda}{w}{w'} \big)\pmod{\scrK}
\end{multline*}
and the latter is in $\poI{w}{w'} \scrK[\alpha] \subseteq \scrK$ by Lemma \ref{lem:F_g_commute}.

For any weight $\mu$, the differential operator $X_i: \LXE[\mu]\to\LXE[\mu-\alpha_i]$ is $\Lie{K}$-invariant.  Likewise for its essential adjoint $Y_i:\LXE[\mu-\alpha_i] \to \LXE[\mu]$. Hence, $\Phase{X_i}:\LXE[\mu]\to\LXE[\mu-\alpha_i]$ is $\Lie{K}$-equivariant.  The normalized BGG operators $\BGG{\lambda}{w}{w'}$ are compositions of such operators, and $\poI{\lambda}{w}{w'}$ is $\Lie{K}$-invariant by definition.  This proves (iv).

Once again, $\Lie{G}$-continuity is trivial.

We prove (vi) in two separate cases.  Firstly, suppose $w=w''$.  For any $w'$ with $\edge{w}{w'}$, Lemma \ref{lem:F2-1_in_K} implies that $\BGGextended{\lambda}{w'}{w}\BGGextended{\lambda}{w}{w'} \equiv \component{w} \pmod{\scrK[\alpha]}$.  By Lemma \ref{lem:operator_po1}(iv),
\begin{eqnarray*}
  \sum_{w'} \poIBGG{\lambda}{w'}{w} \poIBGG{\lambda}{w}{w'}
    &\equiv& \sum_{w'} \poI{w'}{w}\poI{w}{w'} \BGGextended{\lambda}{w'}{w}\BGGextended{\lambda}{w}{w'}
      \pmod{\scrK} \\
    &\equiv& \sum_{w'} \poI{w'}{w}\poI{w}{w'} \component{w} \pmod{\scrK} \\
    &=& \component{w}.
\end{eqnarray*}
If $w\neq w'$, the result is trivial unless there exists at least one $w'$ such that $w \leftrightarrow w' \leftrightarrow w''$.  If such a $w'$ exists, Lemma \ref{lem:diagram_commutes} implies that the products $\BGG{\lambda}{w'}{w''} \BGG{\lambda}{w}{w'}$ are independent of this intermediate vertex $w'$, modulo $\scrK[\rho]$.  Let us fix one such product and denote it temporarily by $\BGG{\lambda}{w\to\cdot}{w''}$.  Then by Lemma \ref{lem:operator_po1}(iv),
\begin{eqnarray*}
  \sum_{w'} \poIBGG{\lambda}{w'}{w''} \poIBGG{\lambda}{w}{w'}
    &\equiv& \sum_{w'} \poI{w'}{w''}\poI{w}{w'} \BGGextended{\lambda}{w'}{w''}\BGGextended{\lambda}{w}{w'}
      \pmod{\scrK} \\
    &\equiv& \left(\sum_{w'} \poI{w'}{w}\poI{w}{w'}\right) \BGGextended{\lambda}{w\to \cdot}{w''}  \pmod{\scrK} \\
    &=& 0.
\end{eqnarray*}

\end{proof}

\begin{definition}
Define $\Fredholm{\lambda} \defeq\sum \poIBGG{\lambda}{w}{w'}$, where the sum is over all directed edges in the graph \eref{eq:Weyl_graph}.
\end{definition}

\begin{theorem}
\label{thm:main_theorem}
The operator $\Fredholm{\lambda}\in \scrL$ defines an element $\Kcycle{\lambda} \in K^\Lie{G}(C(\scrX),\CC)$.  That is, 
\begin{enumerate}
\item $\Fredholm{\lambda}$ is {\em odd} with respect to the grading of $H_\lambda$,
\item $\Fredholm{\lambda} - \Fredholm{\lambda}^* \in \scrK$,
\item $\Fredholm{\lambda}^2 - 1 \in \scrK$,
\item $[\Fredholm{\lambda}, M_f] \in \scrK$, for any $f\in C(\scrX)$,
\item $[\Fredholm{\lambda}, U(g)] \in \scrK$, for any $g\in\Lie{G}$,
\item $\Fredholm{\lambda}$ is $\Lie{G}$-continuous,
\end{enumerate}
Moreover, $\Fredholm{\lambda}$ is $\Lie{K}$-invariant: $[\Fredholm{\lambda},U(k)] = 0$ for all $k\in\Lie{K}$.
\end{theorem}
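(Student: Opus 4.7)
The plan is to derive each of the six properties (and the $\Lie{K}$-invariance clause) from the corresponding component-level statements already recorded in Lemma \ref{lem:compact_commutators}, by expanding $\Fredholm{\lambda} = \sum_{\edge{w}{w'}} \poIBGG{\lambda}{w}{w'}$ as a finite sum over directed edges of the graph \eref{eq:Weyl_graph}. Several of the items reduce to a one-line observation: property (i) holds because $\poIBGG{\lambda}{w}{w'}$ sends the summand $\LXE[w\shiftedaction\lambda]$ into $\LXE[w'\shiftedaction\lambda]$, and the defining edge condition $\length(w')=\length(w)\pm 1$ means these summands have opposite parity; property (ii) follows by pairing each undirected edge with its reverse and quoting Lemma \ref{lem:compact_commutators}(i); properties (iv), (v) and (vi) are term-by-term consequences of parts (ii), (iii) and (v) of that same lemma; and the $\Lie{K}$-invariance is term-by-term from part (iv).

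The one substantive step is (iii). First I would observe that since each $\poIBGG{\lambda}{w}{w'}$ contains the factor $\BGGextended{\lambda}{w}{w'}$, which is supported on the summand $\LXE[w\shiftedaction\lambda]$ and takes values in $\LXE[w'\shiftedaction\lambda]$, a product $\poIBGG{\lambda}{w''}{w'''}\poIBGG{\lambda}{w}{w'}$ is automatically zero unless $w''=w'$. Expanding the square therefore yields
\begin{equation*}
\Fredholm{\lambda}^2 \;=\; \sum_{w,w''\in\Weylgroup}\; \sum_{w'\,:\,w\leftrightarrow w'\leftrightarrow w''} \poIBGG{\lambda}{w'}{w''}\poIBGG{\lambda}{w}{w'}.
\end{equation*}
Now Lemma \ref{lem:compact_commutators}(vi) identifies each inner sum with $\delta_{w,w''}\component{w}$ modulo $\scrK$, so that $\Fredholm{\lambda}^2 \equiv \sum_{w\in\Weylgroup}\component{w} = 1 \pmod{\scrK}$, giving (iii).

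I expect the only real bookkeeping obstacle to be the verification that the pairs $(w,w'',w''')$ with $w\neq w''$ contribute nothing mod $\scrK$: this is precisely the content of the case $w\neq w''$ of Lemma \ref{lem:compact_commutators}(vi), which in turn rests on the commuting-diagram statement in Lemma \ref{lem:diagram_commutes} and on the way the scalar operators $\poI{w}{w'}$ were arranged in Lemma \ref{lem:operator_po1}(iv) to absorb the relative signs. Once these two ingredients are cited the cancellation is automatic, and the whole proof reduces to bookkeeping across the six vertices and nine undirected edges of \eref{eq:Weyl_graph}.
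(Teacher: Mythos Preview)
Your proposal is correct and follows exactly the same approach as the paper's proof, which simply states that most properties are immediate from Lemma~\ref{lem:compact_commutators} and then spells out only the computation for (iii) via the expansion $\Fredholm{\lambda}^2 = \sum_{w,w',w''} \poIBGG{\lambda}{w'}{w''}\poIBGG{\lambda}{w}{w'} \equiv \sum_w \component{w} = 1$. In fact you supply more detail than the paper does, including the parity argument for (i) and the observation that cross-terms with mismatched indices vanish identically.
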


\begin{proof}
This is mostly immediate from the previous lemma.  To be explicit about the proof of (iii), Lemma \ref{lem:compact_commutators}(vi) gives
\begin{eqnarray*}
  \Fredholm{\lambda}^2 
    &=& \!\!\sum_{w,w',w''\in\Weylgroup \atop w\leftrightarrow w' \leftrightarrow w''} \!\!\poIBGG{\lambda}{w'}{w''} \poIBGG{\lambda}{w'}{w''} \\
    &\equiv& \sum_{w} \component{w}  \pmod{\scrK} \\
    &=& 1.
\end{eqnarray*}
\end{proof}

\begin{definition}
Let $\pi_\lambda$ denote the irreducible representation of $\Lie{K}$ with highest weight $\lambda$.  Define a homomorphism of abelian groups 
\begin{eqnarray*}
  \theta:R(\Lie{K}) &\to& \KK^\Lie{G}(C(\scrX),\CC) \\ 
     ~ [\pi_\lambda] &\mapsto& \Kcycle{\lambda}.
\end{eqnarray*}
\end{definition}

Let $\maptopt:\CC\to C(\scrX)$ denote the $G$-equivariant $C^*$-morphism induced by the map of $\scrX$ to a point.

\begin{theorem}
\label{thm:splitting}
The map $\maptopt^*\circ\theta:R(\Lie{K})\to  R(\Lie{G})$ is a ring homomorphism which splits the restriction homomorphism $\Res_\Lie{K}^\Lie{G}:R(\Lie{G}) \to R(\Lie{K})$.
\end{theorem}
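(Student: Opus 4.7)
The strategy is to establish the two claims of Theorem \ref{thm:splitting} separately. The splitting property reduces to a $\Lie{K}$-equivariant index computation for $\Fredholm{\lambda}$; the ring homomorphism property is more subtle and will be approached by identifying the image of $\sigma := \maptopt^*\circ\theta$ with the principal ideal generated by $\gamma := \sigma(1)$ in $R(\Lie{G})$.

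\textbf{Splitting.} By construction, $\maptopt^*\Kcycle{\lambda}$ is the class in $R(\Lie{G}) = \KK^\Lie{G}(\CC,\CC)$ of the Fredholm module $(H_\lambda,\Fredholm{\lambda})$ with the $C(\scrX)$-action forgotten, so $\Res_\Lie{K}^\Lie{G}(\maptopt^*\Kcycle{\lambda})$ is the $\Lie{K}$-equivariant Fredholm index of $\Fredholm{\lambda}$; the splitting claim reduces to showing this index equals $[\pi_\lambda]$. I would decompose $H_\lambda$ by $\Lie{K}$-types via the Peter--Weyl isomorphism of Section \ref{sec:harmonic_notation},
$$
  H_\lambda \;\cong\; \bigoplus_{\pi\in\Khat} V^{\pi^\dual}\otimes W_\pi,
  \qquad
  W_\pi \;\defeq\; \bigoplus_{w\in\Weylgroup} p_{-w\shiftedaction\lambda}V^\pi,
$$
where each $W_\pi$ is finite-dimensional and carries the $\ZZ/2$-grading by parity of $\length(w)$. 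Since $\Fredholm{\lambda}$ is $\Lie{K}$-invariant (Theorem \ref{thm:main_theorem}), it acts as $\Id\otimes F_\pi$ on the $\pi^\dual$-isotype, whence $\Ind_\Lie{K}(\Fredholm{\lambda}) = \sum_{\pi\in\Khat} \Ind(F_\pi)\,[V^{\pi^\dual}]$. After replacing $\Fredholm{\lambda}$ by its self-adjoint part $\tfrac12(\Fredholm{\lambda}+\Fredholm{\lambda}^*)$---a compact perturbation, by Theorem \ref{thm:main_theorem}(ii), which therefore does not change the class---each $F_\pi$ is an odd self-adjoint Fredholm operator on the graded space $W_\pi$, so its graded index coincides with the graded dimension:
$$
  \Ind(F_\pi) \;=\; \sum_{w\in\Weylgroup} (-1)^{\length(w)} \dim p_{-w\shiftedaction\lambda}V^\pi.
$$
The classical BGG resolution \eref{eq:BGG_resolution} of $V^\lambda$, restricted $\Lie{K}$-isotype by $\Lie{K}$-isotype, is a finite-dimensional exact sequence whose Euler characteristic on the $\pi^\dual$-isotype equals $\dim\Hom_\Lie{K}(V^{\pi^\dual},V^\lambda) = \delta_{\pi^\dual,\pi_\lambda}$, using the $\Lie{K}$-irreducibility of $V^\lambda$. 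Summing over $\pi$ gives $\Ind_\Lie{K}(\Fredholm{\lambda}) = [V^{\pi_\lambda}] = [\pi_\lambda]$, which is the splitting.

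\textbf{Ring homomorphism.} Given the splitting, $\gamma := \sigma(1)$ satisfies $\Res\gamma = 1$, and a standard Kasparov-theoretic argument identifies $\gamma$ as an idempotent in $R(\Lie{G})$. I would then verify $\sigma(R(\Lie{K})) = \gamma R(\Lie{G})$: granted this, $\gamma R(\Lie{G})$ is a subring with identity $\gamma$, on which $\Res$ restricts to a ring isomorphism onto $R(\Lie{K})$ with inverse $\sigma$, making $\sigma$ a ring homomorphism. The required identification should follow from a factorization $\Kcycle{\lambda} = [\Lhol{\lambda}]\otimes_{C(\scrX)}\Kcycle{0}$ in $\KK^\Lie{G}(C(\scrX),\CC)$, where $[\Lhol{\lambda}]\in K^\Lie{G}(\scrX)$ is the class of the holomorphic line bundle; applying $\maptopt^*$ then places $\maptopt^*\Kcycle{\lambda}$ in the principal ideal generated by $\maptopt^*\Kcycle{0} = \gamma$, as needed.

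\textbf{Main obstacle.} The splitting argument, while requiring a nontrivial $\Lie{K}$-harmonic decomposition, essentially repackages the exactness of the classical BGG complex as a $\Lie{K}$-index statement, using only the compact-perturbation latitude afforded by Theorem \ref{thm:main_theorem}. The ring homomorphism assertion is the delicate part: it demands a structural compatibility between the normalized BGG $K$-homology class and twisting by line bundles. Establishing a factorization of the form $\Kcycle{\lambda} = [\Lhol{\lambda}]\otimes_{C(\scrX)}\Kcycle{0}$---or some suitable surrogate---in the presence of both the phase normalization and the modifying operators $\poI{w}{w'}$ is the principal technical challenge of the proof.
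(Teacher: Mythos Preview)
Your splitting argument is correct and matches the paper's: decompose $H_\lambda$ by $\Lie{K}$-types, observe that on each finite-dimensional isotype the index is an Euler characteristic, and read that off from the exactness of the classical BGG resolution \eref{eq:BGG_resolution}.

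For the ring homomorphism, however, you have set yourself an unnecessary obstacle. The paper does \emph{not} attempt the factorization $\Kcycle{\lambda} = [\Lhol{\lambda}]\otimes_{C(\scrX)}\Kcycle{0}$, nor does it need to show directly that $\sigma(1)$ is idempotent. Instead it invokes Kasparov's Theorem~\ref{thm:split_surjection}, which already supplies an idempotent $\gamma\in R(\Lie{G})$ with $\Res_\Lie{K}^\Lie{G}:\gamma R(\Lie{G})\to R(\Lie{K})$ a ring isomorphism. It then suffices to show that the image of $\maptopt^*\circ\theta$ lies in $\gamma R(\Lie{G})$, i.e.\ that $\gamma\cdot(\maptopt^*\Kcycle{\lambda}) = \maptopt^*\Kcycle{\lambda}$ for every $\lambda$. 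This follows in one stroke from the amenability of $\Lie{B}$: by Kasparov's induction--restriction formula \cite[Theorem~3.6(1)]{Kas88}, the product $\gamma\cdot(\maptopt^*\Kcycle{\lambda})$ equals $\maptopt^*\otimes_{C(\scrX)}(\Ind_\Lie{B}^\Lie{G}\Res_\Lie{B}^\Lie{G}\gamma)\otimes_{C(\scrX)}\Kcycle{\lambda}$, and since $\Lie{B}$ is amenable, $\Res_\Lie{B}^\Lie{G}\gamma = 1\in R(\Lie{B})$, collapsing the expression back to $\maptopt^*\Kcycle{\lambda}$.

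In short, the ``principal technical challenge'' you identify is a genuine difficulty of your route, but the paper avoids it entirely by exploiting that $\Kcycle{\lambda}$ already lives over $C(\Lie{G/B})$ with $\Lie{B}$ amenable. Your factorization, if true, would give an alternative and more constructive proof (and would show directly that $\sigma(1)$ is the Kasparov $\gamma$, which the paper only obtains as Corollary~\ref{cor:gamma} after the fact), but establishing it would require comparing the normalized BGG classes for different $\lambda$ through the $\poI{w}{w'}$ modifications---work that the amenability shortcut renders unnecessary.
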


\begin{proof}
Let $\lambda$ be a dominant weight.  We have that $\Res^\Lie{G}_\Lie{K}\maptopt^*\circ\theta([\pi_\lambda])$ is the $\Lie{K}$-index of $\Fredholm{\lambda}$.  Since $\Fredholm{\lambda}$ is $\Lie{K}$-equivariant, it decomposes as a direct sum of operators on the $\Lie{K}$-isotypical subspaces of $H_\lambda$, each of which is finite dimensional (Example \ref{ex:finite_multiplicities}).  The $\Lie{K}$-index of $\Fredholm{\lambda}$ is the sum of the indices of each component.

To compute this index, we compare with the classical BGG complex.  Let $\mu\defeq w\shiftedaction\lambda$ be in the shifted Weyl orbit of $\lambda$.  The induced bundle $E_\mu$ of our normalized $BGG$-complex and the holomorphic bundle $\Lhol{\mu}$ of the classical BGG complex \eref{eq:BGG_resolution} are identical as $\Lie{K}$-homogeneous line bundles.  The classical BGG resolution  is exact and $\Lie{K}$-equivariant, so exact in each $\Lie{K}$-type.  It follows that the index of $\Fredholm{\lambda}$ is $[\pi_\lambda]$.  Thus the composition $\Res_\Lie{K}^\Lie{G}\circ\maptopt^*\circ\theta$ is the identity on $R(\Lie{K})$.

By Theorem \ref{thm:split_surjection}, $\Res_\Lie{K}^\Lie{G}: \gamma R(\Lie{G}) \to R(\Lie{K})$ is a ring isomorphism, so it suffices to show that the image of $\maptopt^*\theta$ is in $\gamma R(\Lie{G})$.  Using \cite[Theorem 3.6(1)]{Kas88}, we have
\begin{eqnarray*}
\gamma \cdot (\maptopt^*\Kcycle{\lambda}) 
  &=& \maptopt^* \otimes_{C(\Lie{G/B})} (1_{C(\Lie{G/B})}\otimes\gamma)
    \otimes_{C(\Lie{G/B})} \Kcycle{\lambda} \\
  &=& \maptopt^* \otimes_{C(\Lie{G/B})} (\Ind_\Lie{B}^\Lie{G} \Res_\Lie{B}^\Lie{G}\gamma) \otimes_{C(\Lie{G/B})} \Kcycle{\lambda}. 
\end{eqnarray*}
Since $\Lie{B}$ is amenable, $\gamma$ restricts to the unit in $R(\Lie{B})$, so  $\gamma \cdot (\maptopt^* \Kcycle{\lambda}) = \maptopt^* \Kcycle{\lambda}$.

\end{proof}

%\begin{remark}
%If one wishes to avoid use of the Bernstein-Gelfand-Gelfand Theorem altogether, one can compute the dimensions of the $\Lie{K}$-isotypical components of $\LXE[w \shiftedaction \lambda]$ directly by using the Weyl character formula.
%\end{remark}

\begin{corollary}
\label{cor:gamma}
 $\gamma = [(H_0, U, \Fredholm{0})] \in R(\Lie{G})$.
\end{corollary}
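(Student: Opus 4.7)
The plan is to apply Theorem \ref{thm:splitting} to the unit element of $R(\Lie{K})$. The trivial representation of $\Lie{K}$ has highest weight $\lambda = 0$, so $1_{R(\Lie{K})} = [\pi_0]$. By the definition of $\theta$ immediately above Theorem \ref{thm:splitting}, $\theta(1_{R(\Lie{K})}) = \Kcycle{0}$, and by the construction in Section \ref{sec:gamma}, $\Kcycle{0}$ is represented by the triple $(H_0, U, \Fredholm{0})$, where $H_0 = \bigoplus_{w \in \Weylgroup} \LXE[w \shiftedaction 0]$.

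Next, I would unpack the effect of $\iota^*$ on this class. The $C^*$-morphism $\iota : \CC \to C(\scrX)$ sends $\lambda \mapsto \lambda \cdot 1_{C(\scrX)}$, so the induced map $\iota^* : \KK^\Lie{G}(C(\scrX), \CC) \to \KK^\Lie{G}(\CC, \CC) = R(\Lie{G})$ simply forgets the $C(\scrX)$-module structure of a cycle, retaining the Hilbert space, representation of $\Lie{G}$, and Fredholm operator. Therefore $\iota^* \theta(1_{R(\Lie{K})})$ is represented in $R(\Lie{G})$ by the same triple $(H_0, U, \Fredholm{0})$.

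Finally, it remains to identify $\iota^* \theta(1_{R(\Lie{K})})$ with $\gamma$. By Theorem \ref{thm:splitting}, $\iota^* \circ \theta$ is a ring-homomorphism splitting of $\Res_\Lie{K}^\Lie{G}$; and the argument at the end of its proof shows that its image lies in $\gamma R(\Lie{G})$. Since $\Res_\Lie{K}^\Lie{G}$ restricts to a ring isomorphism $\gamma R(\Lie{G}) \xrightarrow{\cong} R(\Lie{K})$ (Theorem \ref{thm:split_surjection} together with the definition of $\gamma$ recalled in the introduction), there is a unique ring-homomorphism splitting of $\Res_\Lie{K}^\Lie{G}$ with image in $\gamma R(\Lie{G})$, namely the inverse of this isomorphism. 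Hence $\iota^* \circ \theta$ coincides with that splitting, and $\iota^* \theta(1_{R(\Lie{K})})$ is precisely the image of $1_{R(\Lie{K})}$ under it, which is $\gamma$ by definition. Combining the three steps yields $\gamma = [(H_0, U, \Fredholm{0})]$ in $R(\Lie{G})$.

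There is no substantive obstacle: this corollary is a formal consequence of Theorem \ref{thm:splitting} once one applies it at $\lambda = 0$ and tracks the effect of $\iota^*$ on Kasparov cycles. The only point requiring care is the uniqueness argument that identifies $\iota^* \theta(1)$ with $\gamma$ rather than some other lift; this uses the bijection $\Res_\Lie{K}^\Lie{G} : \gamma R(\Lie{G}) \cong R(\Lie{K})$ from Kasparov's theorem, which is already in hand.
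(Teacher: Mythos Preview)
Your proposal is correct and is exactly the intended argument: the paper states the corollary without proof, treating it as an immediate consequence of Theorem~\ref{thm:splitting} applied at $\lambda=0$, and your three steps (identifying $1_{R(\Lie{K})}=[\pi_0]$, tracking $\iota^*$, and invoking the uniqueness of the splitting landing in $\gamma R(\Lie{G})$ via the isomorphism $\Res^\Lie{G}_\Lie{K}:\gamma R(\Lie{G})\xrightarrow{\cong} R(\Lie{K})$ already used in the proof of Theorem~\ref{thm:splitting}) spell this out precisely.
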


%--------------------------------------------------------------

\appendix

\section{Harmonic analysis of longitudinal pseudodifferential operators}
\label{sec:PsiDOs_in_A}

This appendix describes the proof of Theorem \ref{thm:PsiDOs_in_A}(ii).
As in \cite{Yuncken:PsiDOs}, the key computation will be made using Gelfand-Tsetlin bases.    The following summary of Gelfand-Tsetlin bases follows the expository paper \cite{Molev} together with some remarks of \cite{Yuncken:PsiDOs}.  We immediately specialize to the case of $\slx(3,\CC)$.

Weights for $\glx(3,\CC)$ correspond to triples of integers $\mathbf{m} = (m_1,m_2,m_3)$ via
$$
  \mathbf{m}: \smatrix{t_1&0&0\\0&t_2&0\\0&0&t_3} \mapsto \sum_i m_it_i.
$$
Dominant weights correspond to descending triples, $m_1\geq m_2 \geq m_3$.

A {\em Gelfand-Tsetlin pattern} is an array of integers
$$
  \pattern \defeq \GTs{\lambda_{3,1}}{\lambda_{3,2}}{\lambda_{3,3}}{\lambda_{2,1}}{\lambda_{2,2}}{\lambda_{1,1}}
$$
satisfying the interleaving conditions
\begin{equation}
\label{eq:interleaving}
  \lambda_{k+1,j} \geq \lambda_{k,j} \geq \lambda_{k+1,j+1}.
\end{equation}
To each Gelfand-Tsetlin pattern there is associated a vector $\xi_\Lambda$ in the irreducible representation $\pi_\mathbf{m}$ with highest weight $\mathbf{m} = (\lambda_{31}, \lambda_{32}, \lambda_{33})$.  These vectors $\xi_\Lambda$ form an orthogonal (not orthonormal) basis for this representation.

When dealing with $\slx(3,\CC)$, rather than $\glx(3,\CC)$, there is some redundancy here.  Two triples describe the same $\slx(3,\CC)$-weight if and only if they differ by a multiple of $(1,1,1)$.  Two Gelfand-Tsetlin patterns describe the same basis vector if and only if they differ by a multiple of the constant pattern
$$
  \GTs111111.
$$

We use the following standard notation: $s_k \defeq \sum_{j=1}^k \lambda_{k,j}$ is the sum of the entries of the $k$th row; $l_{k,j} = \lambda_{k,j}-j+1$; and $\Lambda \pm \delta_{k,j}$ denotes the Gelfand-Tsetlin pattern obtained from $\Lambda$ by adding $\pm1$ to the $(k,j)$-entry.  Then
\begin{itemize}

\item  $\xi_\Lambda$ is a weight vector, with weight $(s_1-s_0, s_1-s_2, s_3-s_2)$.

\item  The representation $\pi_\mathbf{m}$ acts on this basis infinitesimally as follows:
\begin{eqnarray*}
  \pi(X_1) \xi_\Lambda
    &=& - (l_{11} - l_{21}) (l_{11} - l_{22})  \xi_{\Lambda+\delta_{11}} \\
  \pi(X_1^*) \xi_\Lambda
    &=&  \xi_{\Lambda-\delta_{11}} \\
  \pi(X_2) \xi_\Lambda
    &=&  -\frac{(l_{21}-l_{31})(l_{21}-l_{32})(l_{21}-l_{33})}{(l_{21}-l_{22})} \xi_{\Lambda+\delta_{21}} \\
    && \qquad
      -\frac{(l_{22}-l_{31})(l_{22}-l_{32})(l_{22}-l_{33})}{(l_{22}-l_{21})} \xi_{\Lambda+\delta_{22}} \\
  \pi(X_2^*) \xi_\Lambda
    &=&  \frac{(l_{21}-l_{11})}{(l_{21}-l_{22})} \xi_{\Lambda-\delta_{21}}
      +\frac{(l_{22}-l_{11})}{(l_{22}-l_{21})} \xi_{\Lambda-\delta_{22}}
\end{eqnarray*}

\item The norm of $\xi_\Lambda$ is given by
$$
  \| \xi_\Lambda \|^2 = \prod_{k=2}^3 
    \prod_{1\leq i \leq j < k} \frac{(l_{ki}-l_{k-1,j})!} {(l_{k-1,i}-l_{k-1,j})!}
    \prod_{1\leq i < j \leq k} \frac{(l_{ki}-l_{kj}-1)!} {(l_{k-1,i}-l_{k,j}-1)!}
$$

\item The vector $\xi_\Lambda$ lies in an irreducible representation for the Lie subalgebra
$$
  \lie{s}_1 =  \ulmatrix{\slx(2,\CC)}{0}{0}{0&0&0} \cong \slx(2,\CC)
$$
with highest weight is given by the second row $(\lambda_{21}, \lambda_{22})$ of $\Lambda$ (again, modulo multiples of $(1,1)$).
\end{itemize}

\begin{remark}
\label{rmk:K1-type_of_GTs_vectors}
As in Section \ref{sec:principal_series}, we note that the $\Lie{K}_1$-type of a Gelfand-Tsetlin vector $\xi_\Lambda$ in a given weight space is determined by the $\lie{s}_1$-type, hence by second row of $\Lambda$.

%The final point here almost suffices to specify the $\Lie{K}_1$-type of $\xi_\Lambda$.  The only additional information required is to determine the action of the central element
%$$
%  H_1 \defeq \smatrix{-1&0&0\\0&-1&0\\0&0&2} \in (\lie{k_1})_\CC.
%$$
%The above formula for the weight of $\xi_\Lambda$ shows that $H_1$ multiplies $\xi_\Lambda$ by $-(s_1-s_0)-(s_2-s_1)+2(s_3-s_2) = 2s_3-3s_2$.
\end{remark}

Let us lift the longest element $\reflection{\rho} \in \Weylgroup$ to an element
\begin{equation}
\label{eq:long_Weyl_group_element}
  w_\rho \defeq \smatrix{0&0&-1 \\ 0&-1&0 \\ -1&0&0} \in \Lie{K}.
\end{equation}
Conjugation by $w_\rho$ interchanges the subgroups $\Lie{K}_1$ and $\Lie{K}_2$.  We define $\eta_\Lambda = \pi_\mathbf{m}(w_\rho) \xi_\Lambda$.  These vectors form an alternative orthogonal basis for $\pi_{\mathbf{m}}$ with related properties.  In particular, $\eta_\Lambda$ has weight 
$$
  w_\rho\cdot(s_1-s_0, s_1-s_2, s_3-s_2) = (s_3-s_2, s_2-s_1, s_1-s_0),
$$
norm $\|\eta_\Lambda\| = \|\xi_\Lambda\|$, and $\eta_\Lambda$ lies in an irreducible $\lie{s}_2$-subrepresentation with highest weight determined by the second row of $\Lambda$.  
%In order to refer to the corresponding orthonormal bases, we will put $\xvec_\Lambda \defeq \xi_\Lambda / \| \xi_\Lambda \|$, $\yvec_\Lambda \defeq \eta_\Lambda / \| \eta_\Lambda \|$.

We now compare the relative position of these two bases.  We begin with the representation with highest weight $\mathbf{m}=(m,0,-m)$ for $m\in\NN$.  Denote this representation by $\pi_m$.

The $0$-weight space of $V^{(m,0,-m)}$ is spanned by the Gelfand-Tsetlin vectors
$$
  \xi_{m,j} \defeq \xi_\Lambda, \qquad  \text{with } \Lambda = \GTs{m}0{-m}{j}{-j}0,
$$
for $j=0,\ldots, m$.  The $(0,-1,1)$-weight space is spanned by the vectors
$$
  \xi_{m,j}' \defeq \xi_\Lambda, \qquad  \text{with } \Lambda = \GTs{m~\;}0{\;-m}{\hspace{-1.5ex}(j\!-\!1)}{\!\!-j~}0,
$$
for $j=1,\ldots, m$.  By the Gelfand-Tsetlin formulas above,
\begin{eqnarray}
  \pi_m (X_2^*) \xi_{m,j} &=& \frac{j}{2j+1} \xi'_{m,j} + \frac{j+1}{2j+1} \xi'_{m,j+1}
    \label{eq:GTs_formula1} \\
  \pi_m (X_2) \xi_{m,j}' &=& \half ((m+1)^2 - j^2) \xi_{m,j-1} + \half ((m+1)^2 - j^2) \xi_{m,j}
    \label{eq:GTs_formula2}
\end{eqnarray}
so that
\begin{eqnarray}
  \pi_m (X_2)\pi_m (X_2^*) \xi_{m,j} 
%    &=& \frac{j}{2j+1} \half ((m+1)^2-j^2) \xi_{m,j-1} \nonumber \\
%    && \quad + \frac{1}{2j+1} \half (j(m+1)^2 - j^3 + (j+1)(m+1)^2 - (j+1)^3) \xi_{m,j} \nonumber \\
%    && \quad + \frac{j+1}{2j+1} \half ((m+1)^2-(j+1)^2) \xi_{m,j+1} \nonumber \\
    &=& \frac{j}{2(2j+1)}  ((m+1)^2-j^2) \xi_{m,j-1} \nonumber \\
    && \quad + \half ((m+1)^2 - (j^2+j+1)) \xi_{m,j} \nonumber \\
    && \quad + \frac{j+1}{2(2j+1)} ((m+1)^2-(j+1)^2) \xi_{m,j+1}.
      \label{eq:Laplacian2}
\end{eqnarray}
Also,
\begin{equation}
  \pi_m (X_1^*)\pi_m (X_1) \xi_{m,j} = j(j+1) \xi_{m,j}.
\label{eq:Laplacian1}
\end{equation}
The norms of these vectors are
\begin{eqnarray}
  \| \xi_{m,j} \|^2 &=& \frac{1}{2j+1} m!^2 (2m+1)!, 
    \label{eq:norm1} \\
  \| \xi_{m,j}' \|^2 &=& \frac{1}{2j} ((m+1)^2 - j^2)\, m!^2 (2m+1)!.
    \label{eq:norm2} 
\end{eqnarray}
%As earlier, we define $\xvec_{m,j} \defeq \xi_{m,j} / \| \xi_{m,j} \|$, $\xvec_{m,j}' \defeq \xi_{m,j}' / \| \xi_{m,j}' \|$ to be the corresponding unit vectors.

We next define
\begin{eqnarray*}
  \eta_{m,j} &\defeq& \pi_m(w_\rho) \xi_{m,j}, \qquad (0\leq j \leq m) \\
  \eta_{m,j}' &\defeq& \pi_m(w_\rho) \xi_{m,j}', \qquad (1\leq j \leq m) 
\end{eqnarray*}
These vectors have weights $w_\rho\cdot0=0$ and $w_\rho\cdot(0,-1,1) = (1,-1,0)=\alpha_1$, respectively, and their norms $\|\eta_{m,j}\| = \|\xi_{m,j}\|$ and $\|\eta'_{m,j}\| = \|\xi'_{m,j}\|$ are given by Equations \eref{eq:norm1} and \eref{eq:norm2} above.  

For any $X\in\lie{k}_\CC$, $\pi_m(X) \eta_{m,j} = \pi_m(w) \pi_m(\ad(w)X) \xi_{m,j}$.  Since $\ad(w)X_1 = X_2^*$ and $\ad(w)X_1^* = X_2$, the formulae \eref{eq:GTs_formula1}--\eref{eq:Laplacian1} above give
\begin{eqnarray}
  \pi_m (X_1) \eta_{m,j} &=& \frac{j}{2j+1} \eta_{m,j} + \frac{j+1}{2j+1} \eta_{m,j+1},
    \label{eq:GTs_formula1a} \\
  \pi_m (X_1^*) \eta_{m,j}' &=& \half ((m+1)^2 - j^2) \eta_{m,j-1} + \half ((m+1)^2 - j^2) \eta_{j}, \nonumber\\
 ~    \label{eq:GTs_formula2a} \\
%\end{eqnarray}
%and
%\begin{eqnarray}
  \pi_m (X_1^*)\pi_m (X_1) \eta_{m,j} 
%    &=& \frac{j}{2j+1} \half ((m+1)^2-j^2) \xi_{m,j-1} \nonumber \\
%    && \quad + \frac{1}{2j+1} \half (j(m+1)^2 - j^3 + (j+1)(m+1)^2 - (j+1)^3) \xi_{m,j} \nonumber \\
%    && \quad + \frac{j+1}{2j+1} \half ((m+1)^2-(j+1)^2) \xi_{m,j+1} \nonumber \\
    &=& \frac{j}{2(2j+1)} ((m+1)^2-j^2) \eta_{m,j-1} \nonumber \\
    && \quad + \half ((m+1)^2 - (j^2+j+1)) \eta_{m,j} \nonumber \\
    && \quad + \frac{j+1}{2(2j+1)} ((m+1)^2-(j+1)^2) \eta_{m,j+1},
      \label{eq:Laplacian2a} \\
  \pi_m (X_2)\pi_m (X_2^*) \eta_{m,j} &=& j(j+1) \eta_{m,j}. \label{eq:Laplacian1a}
\end{eqnarray}

\begin{lemma}
\label{lem:K2-type_0}
For any $m\in \NN$,
  $$\eta_{m,0} = \omega_m \sum_{j=0}^m (-1)^j \frac{2j+1}{m+1} \xi_{m,j},$$
where $\omega_m\in\CC$ is some phase factor, $|\omega_m|=1$.
\end{lemma}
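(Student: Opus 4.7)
The plan is to identify $\eta_{m,0}$ as (up to a scalar phase) the unique $\lie{s}_2$-invariant vector in the zero-weight space of $\pi_m$, then solve for its coordinates in the Gelfand–Tsetlin basis.

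First I would observe that the Gelfand–Tsetlin pattern for $\xi_{m,0}$ has second row $(0,0)$, so $\xi_{m,0}$ lies in the trivial $\lie{s}_1$-subrepresentation. Since conjugation by $w_\rho$ interchanges $\lie{s}_1$ and $\lie{s}_2$, the vector $\eta_{m,0}=\pi_m(w_\rho)\xi_{m,0}$ lies in the trivial $\lie{s}_2$-subrepresentation; in particular it has weight $w_\rho\cdot 0 = 0$ and is annihilated by $\pi_m(X_2^*)$. Writing $\eta_{m,0}=\sum_{j=0}^m c_j\,\xi_{m,j}$, this reduces the problem to solving a simple recursion.

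Applying $\pi_m(X_2^*)$ term-by-term via \eref{eq:GTs_formula1}, the coefficient of $\xi'_{m,j}$ ($1\le j\le m$) in $\pi_m(X_2^*)\eta_{m,0}$ is
\[
 \frac{j}{2j+1}\,c_j \;+\; \frac{j}{2j-1}\,c_{j-1}.
\]
Setting this to zero yields $c_j = -\tfrac{2j+1}{2j-1}\,c_{j-1}$, which telescopes to $c_j = (-1)^j(2j+1)\,c_0$. It remains only to pin down $c_0$ up to a unit-modulus phase. Using $\|\eta_{m,0}\|=\|\xi_{m,0}\|$ (since $\pi_m(w_\rho)$ is unitary) together with the norm formula \eref{eq:norm1}, I compute
\[
 \|\eta_{m,0}\|^2 = \sum_{j=0}^m (2j+1)^2 |c_0|^2 \cdot \frac{m!^2(2m+1)!}{2j+1} = |c_0|^2\, m!^2(2m+1)! \sum_{j=0}^m (2j+1),
\]
and since $\sum_{j=0}^m(2j+1)=(m+1)^2$, comparing with $\|\xi_{m,0}\|^2 = m!^2(2m+1)!$ gives $|c_0|=1/(m+1)$. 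Writing $c_0=\omega_m/(m+1)$ with $|\omega_m|=1$ produces the claimed formula.

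There is no real obstacle here: the argument is a direct computation in the Gelfand–Tsetlin basis. The only point that warrants care is the bookkeeping to make sure the recursion derived from the vanishing of $\pi_m(X_2^*)\eta_{m,0}$ matches the asserted closed form $(-1)^j(2j+1)/(m+1)$ and that the normalization works out to exactly $1/(m+1)$ rather than $1/\sqrt{m+1}$; this is precisely what the sum-of-odd-integers identity $\sum_{j=0}^m(2j+1)=(m+1)^2$ achieves.
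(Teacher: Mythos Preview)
Your proof is correct and follows essentially the same route as the paper: write $\eta_{m,0}$ in the $\xi_{m,j}$-basis, use annihilation by $\pi_m(X_2^*)$ together with \eref{eq:GTs_formula1} to derive the recursion $c_j=-\tfrac{2j+1}{2j-1}c_{j-1}$, and fix the normalization via \eref{eq:norm1} and $\sum_{j=0}^m(2j+1)=(m+1)^2$. The only cosmetic difference is that you justify $\pi_m(X_2^*)\eta_{m,0}=0$ by the $\lie{s}_2$-type argument, while the paper cites \eref{eq:Laplacian1a} directly.
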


\begin{proof}

Write $\eta_{m,0} = \sum_{j=0}^m c_{m,j} \xi_{m,j}$.  Note that $\eta_{m,0}$ is annihilated by $\pi_m(X_2^*)$ (for instance, by Equation \eref{eq:Laplacian1a}), so Equation \eref{eq:GTs_formula1} gives, 
$$
  0 = \pi_m(X_2^*) \sum_{j=0}^m c_{m,j} \xi_{m,j} 
    \;=\; \sum_{j=0}^m c_{m,j} \left( \frac{j}{2j+1} \xi_{m,j}' + \frac{j}{2j+1} \xi_{m,j+1}' \right).
$$
Taking the coefficient of $\xi_{m,j}'$ in this equation gives
%$$
%  0 = \frac{j}{2j+1} c_{m,j} + \frac{j}{2j-1} c_{m,j-1},
%$$
$c_{m,j} = -\frac{2j+1}{2j-1} c_{m,j-1}$.
By induction, $c_{m,j} = (-1)^j (2j+1) c_{m,0}$.  Hence
\begin{equation}
\label{eq:basis_comparison}
  \eta_{m,0} = c_{m,0} \sum_{j=0}^m (-1)^j (2j+1) \xi_{m,j}.
\end{equation}
Computing the norms of both sides of this using Equation \eref{eq:norm2}, we get
\begin{eqnarray*}
  m!^2 (2m+1)! &=& |c_{m,0}|^2 \sum_{j=0}^m (2j+1)^2 \frac{1}{(2j+1)} m!^2 (2m+1)! \\
    &=& |c_{m,0}|^2 (m+1)^2 m!^2 (2m+1)!
\end{eqnarray*}
Hence, $|c_{m,0}| = 1/(m+1)$, which completes the proof.
\end{proof}

Define 
\begin{equation}
\label{eq:a}
  a_{m,j,k} \defeq \frac{(-1)^j\overline{\omega_m}}{m!^2 (2m+1)!} \,\ip{ \xi_{m,j}, \eta_{m,k} }.
\end{equation}
Consistent with earlier convention, we put $a_{m,j,k} \defeq 0$ if $0\leq j,k \leq m$ does not hold.

\begin{lemma}
\label{lem:recurrence_relation}
For $0\leq j,k \leq m$, we have the recurrence relation in $k$
\begin{multline}
  k ((m+1)^2 - k^2) \,a_{m,j,k-1} \\
    \quad + (2k+1) ((m+1)^2 - (k^2+k+1) - 2j(j+1))\, a_{m,j,k} \\
    \quad + (k+1) ((m+1)^2 - (k+1)^2 )\, a_{m,j,k+1}  = 0,
\label{eq:recurrence_relation}
\end{multline}
with initial condition $a_{m,j,0} = \frac{1}{(m+1)}$.
\end{lemma}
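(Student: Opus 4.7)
The plan is to compute the inner product $\langle \xi_{m,j}, \pi_m(X_1^*X_1)\eta_{m,k}\rangle$ in two different ways and equate the results. Since $\pi_m$ is a unitary representation of $\Lie{K}=\SU(3)$, $\pi_m(X_1^*)$ is the Hilbert-space adjoint of $\pi_m(X_1)$, so the operator $\pi_m(X_1^*X_1)$ is self-adjoint. This lets us move it across the inner product.

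On the $\eta$-side, I would apply Equation \eref{eq:Laplacian2a} directly to expand $\pi_m(X_1^*X_1)\eta_{m,k}$ as a linear combination of $\eta_{m,k-1}, \eta_{m,k}, \eta_{m,k+1}$, producing the three-term expression
\begin{multline*}
\langle \xi_{m,j}, \pi_m(X_1^*X_1)\eta_{m,k}\rangle
= \tfrac{k}{2(2k+1)}((m+1)^2-k^2)\langle\xi_{m,j},\eta_{m,k-1}\rangle \\
+ \tfrac{1}{2}((m+1)^2-(k^2+k+1))\langle\xi_{m,j},\eta_{m,k}\rangle
+ \tfrac{k+1}{2(2k+1)}((m+1)^2-(k+1)^2)\langle\xi_{m,j},\eta_{m,k+1}\rangle.
\end{multline*}
On the $\xi$-side, self-adjointness together with Equation \eref{eq:Laplacian1} gives
$$\langle \xi_{m,j}, \pi_m(X_1^*X_1)\eta_{m,k}\rangle = \langle \pi_m(X_1^*X_1)\xi_{m,j}, \eta_{m,k}\rangle = j(j+1)\langle \xi_{m,j}, \eta_{m,k}\rangle.$$
Equating the two expressions, clearing the factor $2(2k+1)$ and collecting the middle term yields exactly the recurrence \eref{eq:recurrence_relation} for the inner products $\langle \xi_{m,j},\eta_{m,k}\rangle$. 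Because the normalizing constant $\frac{(-1)^j\overline{\omega_m}}{m!^2(2m+1)!}$ in the definition \eref{eq:a} is independent of $k$, it factors out of the linear recurrence, so the same relation holds for $a_{m,j,k}$.

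For the initial condition, I would substitute the expansion from Lemma \ref{lem:K2-type_0} into $\langle \xi_{m,j}, \eta_{m,0}\rangle$, using orthogonality of the Gelfand-Tsetlin basis and the norm formula \eref{eq:norm1} to get
$$\langle \xi_{m,j}, \eta_{m,0}\rangle = \omega_m (-1)^j \tfrac{2j+1}{m+1}\|\xi_{m,j}\|^2 = \omega_m (-1)^j \tfrac{1}{m+1}\,m!^2(2m+1)!,$$
whence $a_{m,j,0} = 1/(m+1)$ after cancelling the constant in \eref{eq:a}.

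I do not anticipate a genuine obstacle here: the proof is essentially a bookkeeping exercise once one notices that $\pi_m(X_1^*X_1)$ is diagonal in the $\xi$-basis (by \eref{eq:Laplacian1}) but tridiagonal in the $\eta$-basis (by \eref{eq:Laplacian2a}). The only point requiring care is the sign/phase tracking: verifying that the factor $(-1)^j$ in the definition of $a_{m,j,k}$ is consistent under the recurrence (it is, since the factor is independent of $k$) and that the boundary terms $a_{m,j,-1}=a_{m,j,m+1}=0$ are automatic from the coefficients $k((m+1)^2-k^2)$ and $(k+1)((m+1)^2-(k+1)^2)$ vanishing at $k=0$ and $k=m$ respectively, matching the convention that $a_{m,j,k}=0$ outside $0\leq k\leq m$.
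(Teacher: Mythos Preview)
Your proposal is correct and follows essentially the same approach as the paper: both compute $\langle \pi_m(X_1^*X_1)\xi_{m,j},\eta_{m,k}\rangle = \langle \xi_{m,j},\pi_m(X_1^*X_1)\eta_{m,k}\rangle$ using \eqref{eq:Laplacian1} on the left and \eqref{eq:Laplacian2a} on the right, and obtain the initial condition from Lemma~\ref{lem:K2-type_0} together with the norm formula \eqref{eq:norm1}. Your additional remarks on the $k$-independence of the normalizing constant and the automatic vanishing of the boundary coefficients are correct elaborations.
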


\begin{remark}
When $k=0$, the first term in \eref{eq:recurrence_relation} vanishes, so that for each fixed $j$ and $m$, the one initial condition suffices to determine a solution for all $k$.
\end{remark}

\begin{proof}
Applying Equations \eref{eq:Laplacian1} and \eref{eq:Laplacian2a} to the equality
$$
 \ip{ \pi_m(X_1^*) \pi_m(X_1) \xi_{m,j} , \eta_{m,k} }  = \ip{ \xi_{m,j} ,  \pi_m(X_1^*) \pi_m(X_1) \eta_{m,k} },
$$
yields
\begin{eqnarray*}
   j(j+1) \ip{ \xi_{m,j} , \eta_{m,k} } 
  &=&   \frac{k}{2(2k+1)} ((m+1)^2-k^2) \ip{ \xi_{m,j}, \eta_{m,k-1} }\\
      && \quad + \half ((m+1)^2 - (k^2+k+1)) \ip{ \xi_{m,j}, \eta_{m,k} }\\
      && \quad + \frac{k+1}{2(2k+1)} ((m+1)^2-(k+1)^2) \ip{ \xi_{m,j}, \eta_{m,k+1} },
\end{eqnarray*}
which reduces to \eref{eq:recurrence_relation}.
For the initial condition, Lemma \ref{lem:K2-type_0} gives
$$
  a_{m,j,0} =  (-1)^j \frac{2j+1}{m+1} \frac{1}{m!^2 (2m+1)!} \| \xi_{m,j} \|^2 = \frac{(-1)^j}{(m+1)}.
$$

\end{proof}

We now give an approximate solution to the recurrence relation.

\begin{lemma}
\label{lem:approximate_solution}

Define 
\begin{equation}
\label{eq:b}
  b_{m,j,k} \defeq \textstyle \frac{1}{m+1} P_k \left(  2\left( \frac{j}{m+1} \right)^2 -1  \right),
\end{equation}
where $P_k$ is the $k$th Legendre polynomial.  
For each $k\in \NN$, there is a constant $C(k)$ independent of $j$ and $m$ such that
\begin{equation}
\label{eq:approximate_solution}
  | a_{m,j,k} - b_{m,j,k} | \leq C(k) (m+1)^{-2} \qquad \text{for all $m\geq k$, $0\leq j \leq m$}.
\end{equation}
\end{lemma}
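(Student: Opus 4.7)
The plan is to set $e_{m,j,k} := a_{m,j,k} - b_{m,j,k}$ and prove by induction on $k$ that $|e_{m,j,k}| \leq C(k)(m+1)^{-2}$, uniformly in $0 \leq j \leq m$. The base case $k=0$ is immediate, since $a_{m,j,0} = 1/(m+1) = P_0(x_{m,j})/(m+1) = b_{m,j,0}$ exactly, where $x_{m,j} := 2(j/(m+1))^2 - 1 \in [-1,1]$. With the convention $e_{m,j,-1} = 0$, the recurrence from Lemma \ref{lem:recurrence_relation} then determines $e_{m,j,k+1}$ from $e_{m,j,k}$ and $e_{m,j,k-1}$.

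The key algebraic observation is that the middle coefficient of the recurrence splits cleanly as
\begin{equation*}
(m+1)^2 - (k^2+k+1) - 2j(j+1) = -(m+1)^2 x_{m,j} - (k^2+k+1+2j),
\end{equation*}
while the outer coefficients split as $k((m+1)^2 - k^2) = k(m+1)^2 - k^3$ and $(k+1)((m+1)^2 - (k+1)^2) = (k+1)(m+1)^2 - (k+1)^3$. Let $L_k$ denote the recurrence operator from Lemma \ref{lem:recurrence_relation}. When we apply $L_k$ to $b_{m,j,\bullet}$, the $(m+1)^2$-pieces assemble into $(m+1)\bigl[k P_{k-1}(x) - (2k+1) x P_k(x) + (k+1) P_{k+1}(x)\bigr]$, which vanishes by Bonnet's three-term recurrence for Legendre polynomials. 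The leftover residual
\begin{equation*}
R_{m,j,k} := L_k[b]_{m,j} = -k^3 b_{m,j,k-1} - (2k+1)(k^2+k+1+2j) b_{m,j,k} - (k+1)^3 b_{m,j,k+1}
\end{equation*}
is bounded uniformly in $m,j$: using $|P_k| \leq 1$ on $[-1,1]$ and $j \leq m$, one gets $|R_{m,j,k}| \leq C''(k)$ for some constant depending only on $k$.

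Since $L_k[a] = 0$, we have $L_k[e]_{m,j} = -R_{m,j,k}$. Solving for $e_{m,j,k+1}$ and dividing, for $m \geq 2k+1$ the denominator $(k+1)((m+1)^2 - (k+1)^2)$ exceeds $\tfrac{3}{4}(k+1)(m+1)^2$, while the coefficients of $e_{m,j,k-1}$ and $e_{m,j,k}$ in the numerator are $O((m+1)^2)$ with implied constants depending only on $k$ (using $|x_{m,j}| \leq 1$ and $j \leq m$). This yields a second-order linear inductive estimate
\begin{equation*}
|e_{m,j,k+1}| \leq c_1(k)(m+1)^{-2} + c_2(k)|e_{m,j,k}| + c_3(k)|e_{m,j,k-1}|,
\end{equation*}
from which the bound $|e_{m,j,k}| \leq C(k)(m+1)^{-2}$ propagates by induction. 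For the finitely many exceptional pairs with $k \leq m < 2k+1$, trivial boundedness (both $a_{m,j,k}$ and $b_{m,j,k}$ are $O(1/(m+1))$) allows us to enlarge $C(k)$ so the estimate still holds.

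The main obstacle is that $|R_{m,j,k}|$ is only $O(1)$ rather than $O((m+1)^{-1})$, because of the $2j$ contribution, which can be as large as $2m$. The extra factor of $(m+1)$ gained from cancelling Bonnet's recurrence is precisely what rescues the argument: dividing by the $O((m+1)^2)$ denominator then produces the stated $O((m+1)^{-2})$ bound on the residual contribution, and the uniformity of all constants in $j$ must be tracked carefully at each stage.
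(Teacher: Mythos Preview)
Your proof is correct and follows essentially the same approach as the paper: induct on $k$, use Bonnet's three-term recurrence for Legendre polynomials to show that $L_k[b]$ leaves only an $O(1)$ residual, divide by the coefficient $(k+1)((m+1)^2-(k+1)^2)$ to propagate the $(m+1)^{-2}$ bound, and absorb the finitely many small-$m$ cases into the constant. One minor wobble: your justification for the exceptional range $k\leq m<2k+1$ appeals to $a_{m,j,k}=O(1/(m+1))$, which is what you are proving (cf.\ Remark~\ref{rmk:a_bound}); but this is harmless, since for those finitely many pairs $(m,j)$ you can simply take the maximum of $(m+1)^2|e_{m,j,k+1}|$ directly, exactly as the paper does.
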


\begin{proof}
Note that $b_{m,j,0} = \frac1{m+1} = a_{m,j,0}$, so that the case $k=0$ is trivial. 

Now fix $k\in\NN$, and assume inductively that $C(k)$ and $C(k-1)$ have been defined.  
Put $C_1(k+1) \defeq \max \{ (m+1)^2 |a_{m,j,k+1} - b_{m,j,k+1}| : k+1 \leq m < 2k, 0 \leq j \leq m \}$, so that
\begin{equation}
\label{eq:approximate_solution1}
  | a_{m,j,k+1} - b_{m,j,k+1} | \leq C_1(k) (m+1)^{-2} \quad \text{for all $0\leq j \leq m$ with $k+1 \leq m < 2k$}.
\end{equation}
Thus, we restrict to the case $m\geq 2k$.  

By a well known recurrence relation for Legendre polynomials,
$$
   (k+1)\, b_{m,j,k+1} - (2k+1) \left( \textstyle  2\left( \frac{j}{m+1} \right)^2 - 1 \right) \,b_{m,j,k}
    + k \,b_{m,j,k-1} = 0.
$$
Therefore,
\begin{multline*}
  (k+1) ((m+1)^2 - (k+1)^2 )\, b_{m,j,k+1} \\
    \quad + (2k+1) ((m+1)^2 - (k+1)^2) \left( \textstyle 1- 2\left( \frac{j}{m+1} \right)^2 \right) b_{m,j,k} \\
    \quad + k ((m+1)^2 - (k+1)^2)\, b_{m,j,k-1} = 0.
%\label{eq:b_recurrence_relation}
\end{multline*}
Subtracting this from Equation \eref{eq:recurrence_relation}, we get
\begin{multline}
  (k+1) ((m+1)^2 - (k+1)^2 ) (a_{m,j,k+1} - b_{m,j,k+1}) \\
    \quad + (2k+1) ((m+1)^2 - (k^2+k+1) - 2j(j+1)) (a_{m,j,k} - b_{m,j,k}) \\
    \quad + k ((m+1)^2 - k^2) (a_{m,j,k-1} - b_{m,j,k-1}) \\
    \quad \textstyle + (2k+1) \left(k-2j - 2(k+1)^2 \left( \frac{j}{m+1} \right)^2 \right) b_{m,j,k} \\
    + k(2k+1) b_{m,j,k-1} 
   = 0.
\label{eq:c_recurrence_relation}
\end{multline}

Note that $|P_k(x)| \leq 1$ for $x\in[-1,1]$.  Therefore $|b_{m,j,k}| \leq (m+1)^{-1}$ for $0\leq j\leq m$, and hence the latter two terms of Equation \eref{eq:c_recurrence_relation} are bounded by a constant $C_2(k)$ depending only on $k$.  Using the inductively assumed bounds on $|a_{m,j,k} - b_{m,j,k}|$ and $|a_{m,j,k-1} - b_{m,j,k-1}|$ we get
\begin{multline*}
  (k+1) ((m+1)^2 - (k+1)^2 ) |a_{m,j,k+1} - b_{m,j,k+1}| \\
    \leq (2k+1) 4 C(k)  + k \,C(k-1)  +  C_2(k).
\end{multline*}
If $m\geq 2k$, then 
$
 (k+1) ((m+1)^2 - (k+1)^2) > \half (k+1) (m+1)^2
$,
so that 
\begin{multline}
\label{eq:approximate_solution2}
  |a_{m,j,k+1} - b_{m,j,k+1}|   \leq [16 C(k) + 2 C(k-1) + 2 (k+1)^{-1} C_2(k+1)] (m+1)^{-2} \\
    \qquad \text{for all $0\leq j \leq m$ with $m\geq 2k$}.
\end{multline}
If $C(k+1)$ is the maximum of the constants $C_1(k+1)$ and $[16 C(k) + 2 C(k-1) + 2 (k+1)^{-1} C_2(k+1)]$, we are done.

\end{proof}

\begin{remark}
\label{rmk:a_bound}
We observed that $|b_{m,j,k}| \leq (m+1)^{-1}$ for all $0\leq j,k\leq m$.  In the light of the estimate \eref{eq:approximate_solution} we also have $|a_{m,j,k}| \leq (1+C(k)) (m+1)^{-1}$. 
\end{remark}

Now consider the action of $X_1\in \lie{k}_\CC$ on the zero weight space of $\pi_{(m,0,-m)}$.  Note that  $X_1$ maps $p_0 V^{\pi_{(m,0,-m)}}$ to $p_{\alpha_1} V^{\pi_{(m,0,-m)}}$.  From Equation \eref{eq:Laplacian1}, 
\begin{equation}
\label{eq:phase_X1}
  (\Phase{X_1}) \xi_{m,j} = X_1.(X_1^*X_1)^{-\half} \xi_{m,j} = \frac{1}{\sqrt{j(j+1)}} X_1 \xi_{m,j}.
\end{equation}
for $j>0$, and $(\Phase{X_1}) \xi_{m,0} = 0$.
%In particular, $\Phase{X_i}$ also maps the $0$-weight space to the $\alpha_1$-weight space.
We next give an approximate formula for $\Phase{X_1}
%:p_0 V^{\pi_{(m,0,-m)}} \to p_{\alpha_1} V^{\pi_{(m,0,-m)}}
$ with respect to the alternative basis $\{\eta_\Lambda\}$.

Recall that $\{\eta_{m,j}\}_{j=0}^m$ and $\{\eta_{m,j}'\}_{j=1}^m$ are orthogonal bases for $p_0V^{(m,0,-m)}$ and $p_{\alpha_1}V^{(m,0,-m)}$, respectively.   We let 
\begin{eqnarray}
  \yvec_{m,j} &\defeq& \eta_{m,j} / \| \eta_{m,j} \| = \frac{1}{m! (2m+1)!^\half} \eta_{m,j}, 
    \label{eq:yvec1} \\
  \yvec_{m,j}' &\defeq& \eta_{m,j}' / \| \eta_{m,j}' \| = \frac{1}{m! (2m+1)!^\half} 
    \sqrt{\frac{2j}{(m+1)^2-j^2}} \eta_{m,j} 
          \label{eq:yvec2} 
\end{eqnarray}
be the corresponding orthonormal bases.

\begin{lemma}
\label{lem:limit}

For each fixed $k\in\NN$, 
$$
  \lim_{m \to \infty} | \ip{ (\Phase{X_i}) \yvec_{m,0}, \yvec_{m,k}' } | = \sqrt{2k} \left(\frac{1}{2k-1} - \frac{1}{2k+1} \right).
$$

\end{lemma}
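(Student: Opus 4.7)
The plan is to reduce the limit to an asymptotic Riemann sum involving the coefficients $a_{m,j,k}$, and then apply Lemma \ref{lem:approximate_solution} together with a Legendre-polynomial generating function identity.

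First I would introduce orthonormal bases adapted to $\Phase{X_1}$. Set $\xvec_{m,j}\defeq \xi_{m,j}/\|\xi_{m,j}\|$ for $0\le j\le m$; by \eref{eq:Laplacian1} this is an orthonormal basis of $p_0V^{\pi_{(m,0,-m)}}$ that diagonalizes $X_1^*X_1$ with eigenvalues $j(j+1)$. Consequently $\xvec'_{m,j}\defeq X_1\xi_{m,j}/\|X_1\xi_{m,j}\|$ ($1\le j\le m$) is an orthonormal basis of $p_{\alpha_1}V^{\pi_{(m,0,-m)}}$, the polar decomposition gives $\Phase{X_1}\xvec_{m,j}=\xvec'_{m,j}$ for $j\ge 1$ and $\Phase{X_1}\xvec_{m,0}=0$, and hence
$$
  \ip{\Phase{X_1}\yvec_{m,0},\,\yvec'_{m,k}} \;=\; \sum_{j=1}^{m}\overline{\ip{\xvec_{m,j},\yvec_{m,0}}}\,\ip{\xvec'_{m,j},\yvec'_{m,k}}.
$$

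Next I would compute the two matrix coefficients. From Lemma \ref{lem:K2-type_0} and \eref{eq:norm1}, $\ip{\xvec_{m,j},\yvec_{m,0}} = \omega_m(-1)^j\sqrt{2j+1}/(m+1)$. For the second factor, apply \eref{eq:GTs_formula2a} in the form $X_1^*\eta'_{m,k} = \tfrac{1}{2}((m+1)^2-k^2)(\eta_{m,k-1}+\eta_{m,k})$, pair with $\xi_{m,j}$, and substitute definition \eref{eq:a} and the norm formulas \eref{eq:norm1}--\eref{eq:norm2}. A direct algebraic simplification yields
$$
  \ip{\xvec'_{m,j},\yvec'_{m,k}} \;=\; \omega_m(-1)^j\bigl(a_{m,j,k-1}+a_{m,j,k}\bigr)\sqrt{\tfrac{k(2j+1)((m+1)^2-k^2)}{2\,j(j+1)}}.
$$
Combining the two and taking the modulus (using $|\omega_m|=1$),
$$
  \bigl|\ip{\Phase{X_1}\yvec_{m,0},\yvec'_{m,k}}\bigr| \;=\; \tfrac{\sqrt{k((m+1)^2-k^2)/2}}{m+1}\,\Bigg|\sum_{j=1}^{m}\tfrac{2j+1}{\sqrt{j(j+1)}}\bigl(a_{m,j,k-1}+a_{m,j,k}\bigr)\Bigg|.
$$

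The prefactor tends to $\sqrt{k/2}$, and Lemma \ref{lem:approximate_solution} lets me replace $a_{m,j,k'}$ by $b_{m,j,k'}=\tfrac{1}{m+1}P_{k'}(2(j/(m+1))^2-1)$ at a total cost $O(1/(m+1))$ in the sum. Since $(2j+1)/\sqrt{j(j+1)}\to 2$ uniformly on $j\ge 1$ and the bounded-$j$ contributions vanish in the limit, the remaining expression is a Riemann sum converging to
$$
  \sqrt{2k}\int_0^1 \bigl[P_{k-1}(2u^2-1)+P_k(2u^2-1)\bigr]\,du.
$$

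The main obstacle is evaluating this integral in closed form. I would handle it by integrating the Legendre generating function $\sum_k t^kP_k(v)=(1-2vt+t^2)^{-1/2}$ against $(1+v)^{-1/2}$ on $[-1,1]$: the substitution $w=\sqrt{1+v}$ reduces the doubly-integrated generating function to $2\sqrt{2/t}\arctan\sqrt{t}=2\sqrt{2}\sum_{n\ge0}(-1)^n t^n/(2n+1)$, so $\int_{-1}^1 P_k(v)/\sqrt{1+v}\,dv=2\sqrt{2}(-1)^k/(2k+1)$, and therefore $\int_0^1 P_k(2u^2-1)\,du=(-1)^k/(2k+1)$. Substituting back gives $\sqrt{2k}\,\bigl|\tfrac{(-1)^{k-1}}{2k-1}+\tfrac{(-1)^k}{2k+1}\bigr| = \sqrt{2k}\bigl(\tfrac{1}{2k-1}-\tfrac{1}{2k+1}\bigr)$, as claimed.
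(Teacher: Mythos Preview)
Your argument is correct and follows essentially the same route as the paper: both compute the inner product explicitly to reach the expression $\sqrt{2k(1-k^2/(m+1)^2)}\sum_j \tfrac{j+1/2}{\sqrt{j(j+1)}}(a_{m,j,k-1}+a_{m,j,k})$, replace $a_{m,j,k}$ by the Legendre approximation $b_{m,j,k}$ via Lemma~\ref{lem:approximate_solution}, and pass to the Riemann integral $\int_0^1 P_k(2u^2-1)\,du=(-1)^k/(2k+1)$. Two small remarks: your phrase ``$(2j+1)/\sqrt{j(j+1)}\to 2$ uniformly'' is imprecise (there is no parameter to be uniform in); what you need, and what the paper uses, is that the deviation is $O(j^{-2})$ and hence summable, so that multiplying by $|a_{m,j,k}|=O((m+1)^{-1})$ gives a vanishing contribution. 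On the other hand, your direct evaluation of $\int_{-1}^1 P_k(v)(1+v)^{-1/2}\,dv$ via the generating function is a nice self-contained alternative to the paper's citation of a table of integrals.
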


\begin{proof}

Using, successively, Equations \eref{eq:yvec1} and \eref{eq:yvec2}, Lemma \ref{lem:K2-type_0}, Equation \eref{eq:phase_X1}, Equation \eref{eq:GTs_formula2a}, and Equation \eref{eq:a}, we compute
\begin{eqnarray}
\lefteqn{   \ip{ (\Phase{X_1}) \yvec_{m,0}, \yvec_{m,k}' } }  \nonumber \\
    &=& \frac{1}{m!^2(2m+1)!} \sqrt{\frac{2k}{(m+1)^2-k^2}} \ip{ (\Phase{X_1}) \eta_{m,0}, \eta_{m,k}' } \nonumber\\
    &=& \frac{\omega_m}{m!^2(2m+1)!} \sqrt{\frac{2k}{(m+1)^2-k^2}} 
       \sum_{j=0}^m (-1)^j \frac{2j+1}{m+1} \ip{ (\Phase{X_1}) \xi_{m,j}, \eta_{m,k}' } \nonumber\\
     &=& \frac{\omega_m}{m!^2(2m+1)!} \sqrt{\frac{2k}{(m+1)^2-k^2}} 
       \sum_{j=0}^m   \frac{(-1)^j}{m+1} \frac{2j+1}{\sqrt{j(j+1)}} \ip{ (X_1 \xi_{m,j}, \eta_{m,k}' } 
         \nonumber \\
     &=& \frac{\omega_m}{m!^2(2m+1)!} \frac{1}{(m+1)} \sqrt{\frac{2k}{(m+1)^2-k^2}} 
       \sum_{j=0}^m  \frac{(-1)^j (2j+1)}{\sqrt{j(j+1)}} \ip{ (\xi_{m,j}, X_1^* \eta_{m,k}' } \nonumber\\
     &=& \frac{\omega_m}{m!^2(2m+1)!} \frac{\sqrt{2k ((m+1)^2 - k^2)} }{(m+1)} 
          \nonumber\\
     && \hspace{3cm} \times     \sum_{j=0}^m  (-1)^j \frac{2j+1}{2\sqrt{j(j+1)}} \ip{ (\xi_{m,j},  \eta_{m,k-1} + \eta_{m,k} } 
         \nonumber \\
     &=& \omega_m \sqrt{2k \left( 1 - \frac{k^2}{(m+1)^2} \right)}
       \sum_{j=0}^m  \frac{j+\half}{\sqrt{j(j+1)}} (a_{m,j,k-1} + a_{m,j,k} ) .
     \label{eq:F_computation1}  
\end{eqnarray}

Write the in the final line as
\begin{eqnarray}
\lefteqn{ \sum_{j=0}^m  \frac{j+\half}{\sqrt{j(j+1)}} (a_{m,j,k-1} + a_{m,j,k} ) } 
    \label{eq:sum} \\
  &=& \sum_{j=0}^m  \left( \frac{j+\half}{\sqrt{j(j+1)}} -1 \right) (a_{m,j,k-1} + a_{m,j,k} )
    \label{eq:error1} \\
  && + \sum_{j=0}^m  (a_{m,j,k-1}-b_{m,j,k-1}) + (a_{m,j,k} - b_{m,j,k} ) 
    \label{eq:error2} \\
  && + \sum_{j=0}^m (b_{m,j,k-1} + b_{m,j,k}).
    \label{eq:limit}
\end{eqnarray}
In the sum \eref{eq:error1}, 
$$
  \left( \frac{j+\half}{\sqrt{j(j+1)}} -1 \right) =
  \sqrt{ \frac{j^2+j+\frac{1}{4}}{j^2+j}} -1  
%    = \frac{ (j+\half)^2 - j(j+1) }{ \sqrt{ j(j+1) }( j+\half + \sqrt{j(j+1)} )} 
    \leq \frac{1}{8j^2},
$$  
which is summable, while $a_{m,j,k-1}$ and $a_{m,j,k}$ are both $O(m^{-1})$ for fixed $k$, by Remark \ref{rmk:a_bound}, so \eref{eq:error1} tends to $0$ as $m\to\infty$.  By Lemma \ref{lem:approximate_solution}, the sum \eref{eq:error2} is bounded by $(m+1). [C(k-1) + C(k)] (m+1)^{-2}$, so also vanishes as $m\to \infty$.  

Finally,
\begin{equation}
\sum_{j=0}^m b_{m,j,k} 
  = \frac{1}{m+1} \sum_{j=0}^{m} \textstyle  P_k\left( 2\left(\frac{j}{m+1} \right)^2 - 1\right)
\label{eq:Riemann_sum}
\end{equation}
is a Riemann sum for the integral $(-1)^j \int_0^1 P_k (2t^2 -1) \, dt$ 
%(modulo the final term $\frac{(-1)^j}{m+1} P_k( 2 (\frac{m+1}{m+1})^2 -1 ) = \frac{(-1)^j}{m+1}$, which is negligible as $m$ goes to infinity).  
Thus, using the substitution $u=1-2t^2$, \eref{eq:Riemann_sum} converges to
$$
  2^{-\frac{3}{2}}  \int_{-1}^1 (1-u)^{-\half} P_k(-u) \, du = \frac{(-1)^k}{2k+1}
$$
(see {\em e.g.,} \cite[7.225(3)]{Table-of-integrals}).  We obtain that the sum \eref{eq:limit}, and hence \eref{eq:sum} converges to $(-1)^{k-1}\left(\frac{1}{2k-1} - \frac{1}{2k+1}\right)$ as $m\to\infty$.  Putting this into Equation \eref{eq:F_computation1} proves the result.

\end{proof}

In the following lemmas, $\triv$ will denote the trivial representation of $\Lie{K}_2$.

\begin{lemma}
\label{lem:F_in_A_estimate}
On any unitary $\Lie{K}$-representation $\scrH$, $(\Phase{X_1}) p_\triv \in \scrK[\alpha_2](\scrH)$.

\end{lemma}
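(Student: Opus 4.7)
The plan is to reduce the statement to a concrete assertion inside each irreducible $\Lie{K}$-isotype $\pi_m = V^{(m,0,-m)}$ and to combine the pointwise convergence from Lemma \ref{lem:limit} with a total-mass computation that exploits a telescoping identity.

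First I would observe that, by the Cartan--Helgason description of $L^2(\Lie{K}/\Lie{K}_2) \cong L^2(\CP^2)$, the $\Lie{K}_2$-trivial isotype $p_\triv \scrH$ is supported on the $\Lie{K}$-isotypes of type $\pi_m = V^{(m,0,-m)}$, $m\in\NN$, each containing a one-dimensional $\Lie{K}_2$-fixed subspace spanned by the unit vector $\yvec_{m,0}$ of Lemma \ref{lem:K2-type_0}. Since $\Phase{X_1}$ is $\Lie{K}$-equivariant, $(\Phase{X_1})p_\triv$ is a direct sum, over $m$ and $\pi_m$-multiplicities, of the rank-one operators $\yvec_{m,0} \mapsto v_m := (\Phase{X_1})\yvec_{m,0}$; the vector $v_m$ lies in the $\alpha_1$-weight space of $\pi_m$. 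Expanding $v_m = \sum_{k=1}^m c_{m,k}\,\yvec'_{m,k}$ in the orthonormal basis from \eqref{eq:yvec2}, each $\yvec'_{m,k}$ sits in the $\Lie{K}_2$-type $\sigma_k$ of $\lie{s}_2$-highest weight $2k-1$, which is \emph{independent of $m$}. By Proposition \ref{prop:KS_equivalent_conditions} it is therefore enough to exhibit, for every $\epsilon>0$, an integer $N$ with $\sup_m \sum_{k>N,\,k\leq m} |c_{m,k}|^2 < \epsilon^2$.

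Next I would compute $\|v_m\|^2$ exactly. Since $\xi_{m,0}$ spans the unique $\lie{s}_1$-trivial line in the zero weight space of $\pi_m$, the kernel of $X_1$ on that space is $\CC\xi_{m,0}$, so $\|v_m\|^2 = 1 - |\ip{\xvec_{m,0},\yvec_{m,0}}|^2$. Substituting the formula for $\eta_{m,0}$ from Lemma \ref{lem:K2-type_0} and the Gelfand--Tsetlin norms \eqref{eq:norm1} yields $\ip{\xvec_{m,0},\yvec_{m,0}} = \omega_m/(m+1)$, hence $\|v_m\|^2 = 1 - (m+1)^{-2}$. Combined with Lemma \ref{lem:limit}, which gives $|c_{m,k}| \to L_k := \sqrt{2k}\bigl(\tfrac{1}{2k-1} - \tfrac{1}{2k+1}\bigr)$ for each fixed $k$, the crucial input is the telescoping identity $L_k^2 = (2k-1)^{-2} - (2k+1)^{-2}$, giving $\sum_{k=1}^\infty L_k^2 = 1 = \lim_m \|v_m\|^2$. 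Now, given $\epsilon>0$, I pick $N$ with $\sum_{k>N} L_k^2 < \epsilon^2/4$; for $m$ beyond some threshold $M$ depending on $N$, Lemma \ref{lem:limit} yields $\sum_{k\leq N} |c_{m,k}|^2 > \sum_{k\leq N} L_k^2 - \epsilon^2/4$, so that $\sum_{k>N,\,k\leq m} |c_{m,k}|^2 = \|v_m\|^2 - \sum_{k\leq N} |c_{m,k}|^2 < \epsilon^2/2$. The finitely many remaining values $N<m<M$ are absorbed by enlarging $N$ to $\max(N,M-1)$, since then the tail sum becomes empty.

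The principal obstacle is the precise numerical matching between the telescoping sum $\sum L_k^2 = 1$ and the total-mass limit $\lim_m \|v_m\|^2 = 1$. Without this coincidence, the mere pointwise convergence $|c_{m,k}| \to L_k$ would not force the $\ell^2$-tails of $v_m$ to vanish uniformly in $m$, and the harmonic-compactness criterion of Proposition \ref{prop:KS_equivalent_conditions} could not be verified.
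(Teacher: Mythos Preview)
Your argument is correct and follows essentially the same route as the paper's proof: reduce to the irreducibles $\pi_m=V^{(m,0,-m)}$, use that $(\Phase{X_1})p_\triv$ is a direct sum of rank-one maps $\yvec_{m,0}\mapsto v_m$, invoke Lemma~\ref{lem:limit} for the pointwise limits $|c_{m,k}|\to L_k$, and exploit the telescoping identity $L_k^2=(2k-1)^{-2}-(2k+1)^{-2}$ so that $\sum_k L_k^2=1$ matches the total mass. The paper argues identically, except that it simply bounds $\|(\Phase{X_1})p_\triv\|^2\le 1$ rather than computing $\|v_m\|^2=1-(m+1)^{-2}$ exactly; your sharper value is not needed but does no harm. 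One small omission: Proposition~\ref{prop:KS_equivalent_conditions}(ii) also requires $\|(\Phase{X_1})p_\triv\,p_F^\perp\|<\epsilon$, which you do not address; this is trivial once you include the trivial $\Lie{K}_2$-type $\triv$ in $F$, since $(\Phase{X_1})p_\triv\,p_\triv^\perp=0$.
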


\begin{proof}
We will prove the equivalent condition of Proposition \ref{prop:KS_equivalent_conditions}(ii).  Note that if $F\subset\Khat_2$ contains $\sigma_0$ then $(\Phase{X_1}) p_\triv p_\irrepset^\perp =0$, so we only need that
for any $\epsilon>0$ there is a finite set  $\irrepset \subset \Khat_2$ such that 
\begin{equation}
\label{eq:F_in_A_estimate}
  \| p_\irrepset^\perp (\Phase{X_1}) p_\triv \| < \epsilon.
\end{equation}
We begin with the case of $\scrH$ an irreducible $\Lie{K}$-representation, say of highest weight $(m_1,m_2,m_3)$.

If $\eta\in\scrH$ is of trivial $\Lie{K}_2$-type, then it has trivial $\lie{s}_2$-type and weight $0$.  By the defining properties of the Gelfand-Tsetlin basis, $\eta$ must be a scalar multiple of $\eta_{m,0}$ for some $m\in\NN$.
%$$
%  \Lambda = \GTs{m}{0}{\hspace{-1ex}-m}{0}{\;\;0}{0}.
%$$
Therefore $p_\triv =0$ on any irreducible representation other than the representations $\pi_{(m,0,-m)}$ considered above.

On $V^{\pi_{(m,0,-m)}}$, $p_\triv$ is the projection onto the span of $\yvec_{m,0}$.  Let $\sigma_k'$ denote the $\Lie{K}_2$-type in the $\alpha_1$-weight space with highest weight $(k-1,-k)$ for $\lie{s}_2$, so that $p_{\sigma_k'}$ is the projection onto $\yvec_{m,k}'$.
Then,
$$
  \|p_{\sigma_k}' (\Phase{X_1}) p_\triv \|^2 
    = |  \ip{ \yvec_{m,k}' , (\Phase{X_1}) \yvec_{m,0} } |^2 .
$$

Choose $l$ with $(2l+1)^{-2} < \half \epsilon$.  Applying Lemma \ref{lem:limit} to $k=1,\ldots,l$, we can find $M$ sufficiently large that for all $k \leq l$, 
$$
  | \ip{ \yvec_{m,k}' , (\Phase{X_1}) \yvec_{m,0} } |^2
    \geq 2k \left( \frac{1}{2k-1} - \frac{1}{2k+1} \right)^2 - \frac{1}{2l} \epsilon
      \qquad \text{for all $m \geq M$}.
$$
Putting $F_l \defeq \{\sigma_1',\ldots, \sigma_l'\}$, we get
\begin{eqnarray*}
  \| p_{F_l}^\perp (\Phase{X_1}) p_\triv \|^2
    &=& \|(\Phase{X_1}) p_\triv \|^2 
      - \sum_{k=1}^l \| p_{\sigma_k'} (\Phase{X_1}) p_\triv \|^2 \\
    &\leq& 1 - \sum_{k=1}^l \left( 2k \left( \frac{1}{2k-1} - \frac{1}{2k+1} \right)^2 - \frac{1}{2l} \epsilon \right) \\
    &\leq& 1 + \half \epsilon + \sum_{k=1}^l \frac{1}{(2k-1)^2} - \frac{1}{(2k+1)^2} \\
    &=& 1+ \half\epsilon - \left(1 - \frac{1}{(2l+1)^2} \right) \\
    &<& \epsilon,
 \label{eq:orthogonal_reduction*}
\end{eqnarray*}
so that \eref{eq:F_in_A_estimate} holds on all $\pi_{(m,0,-m)}$ with $m\geq M$.

In the finitely many $\Lie{K}$-representations $\pi_{(m,0,-m)}$ with $m< M$, there can only appear finitely many $\Lie{K}_2$-types.  Let $\irrepset\subset\Khat_2$ be the finite set containing all of these $\Lie{K}_2$-types as well as $\sigma_1'\,\ldots,\sigma_l'$ and $\sigma_0$.  Then $p_\irrepset^\perp (\Phase{X_1}) p_\triv  = 0$ on $V^{(m,0,-m)}$ for all $m<M$.  Hence $\|p_\irrepset^\perp (\Phase{X_1}) p_\triv\|<\epsilon$ on all irreducible representations.

For a general unitary $\Lie{K}$-representation $\scrH$, $p_\irrepset^\perp (\Phase{X_1}) p_\triv$ decomposes as a sum of operators on each irreducible component, so the same estimate holds.

\end{proof}

\begin{lemma}
\label{lem:F_in_A_estimate2}
On any unitary $\Lie{K}$-representation $\scrH$ the operators $(\Phase{X_1}^*) p_\triv$, and therefore $p_\triv (\Phase{X_1})$, are in $\scrK[\alpha_2](\scrH)$ .
\end{lemma}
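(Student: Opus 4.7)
My plan is to deduce this from Lemma~\ref{lem:F_in_A_estimate} via an anti-unitary symmetry, avoiding a repeat of the Gelfand--Tsetlin analysis.  As in the proof of Lemma~\ref{lem:F_in_A_estimate}, it suffices to treat each irreducible $\Lie{K}$-subrepresentation of $\scrH$ separately, and $p_\triv$ is nonzero only on the self-dual representations $\pi_m \defeq \pi_{(m,0,-m)}$.  Fix $m \in \NN$ and work on $V^{\pi_m}$; self-duality provides an anti-unitary $\Lie{K}$-equivariant isomorphism $J: V^{\pi_m} \to V^{\pi_m}$.

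Anti-linear $\Lie{K}$-equivariance gives $J\,\pi_m(X)\,J^{-1} = \pi_m(\bar X)$ for any $X \in \lie{k}_\CC$, where $\bar{\,\cdot\,}$ denotes complex conjugation with respect to the real form $\sux(3)$.  A direct computation gives $\bar X_1 = -X_1^*$, hence $J\,\pi_m(X_1)\,J^{-1} = -\pi_m(X_1)^*$.  Since $J$ is anti-unitary it intertwines polar decompositions, so taking phases yields
$$
  \Phase{X_1}^* \;=\; -J^{-1}\,\Phase{X_1}\,J.
$$
The trivial $\Lie{K}_2$-type is self-dual, so $J$ preserves the trivial $\Lie{K}_2$-isotypical subspace; thus $[J,p_\triv] = 0$ and
$$
  \Phase{X_1}^*\,p_\triv \;=\; -J^{-1}\,\big(\Phase{X_1}\,p_\triv\big)\,J.
$$

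It remains to note that conjugation by $J$ preserves $\scrK[\alpha_2]$.  Indeed, $J$ intertwines $p_\sigma$ with $p_{\sigma^*}$ for each $\sigma \in \Khat_2$, so it sends $\Lie{K}_2$-harmonically finite operators to $\Lie{K}_2$-harmonically finite operators (with matrix entries merely re-indexed by the duality involution), and it is norm-preserving.  Combined with Lemma~\ref{lem:F_in_A_estimate} and the fact that the finite-$\irrepset$ approximations there can be taken uniformly in $m$, this yields $\Phase{X_1}^*\,p_\triv \in \scrK[\alpha_2](\scrH)$.  The second assertion $p_\triv\,\Phase{X_1} = (\Phase{X_1}^*\,p_\triv)^* \in \scrK[\alpha_2](\scrH)$ is then immediate, since $\scrK[\alpha_2]$ is a $C^*$-category.

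The main delicacy is bookkeeping the anti-linearity of $J$: one must verify that it intertwines the polar decomposition with the correct signs, and that the involutive relabeling $\sigma \mapsto \sigma^*$ of $\Lie{K}_2$-types does not disrupt the uniformity in $m$ needed to conclude membership in the norm-closure defining $\scrK[\alpha_2]$.  Neither issue is deep, but both require care.
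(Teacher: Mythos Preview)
Your proof is correct and follows essentially the same approach as the paper: both arguments deduce the result from Lemma~\ref{lem:F_in_A_estimate} via an anti-unitary intertwiner $J$ that exchanges $\Phase{X_1}$ with $-\Phase{X_1}^*$ and permutes $\Lie{K}_2$-types by $\sigma\mapsto\sigma^\dual$, fixing the trivial type. The only cosmetic difference is that the paper uses the canonical anti-unitary $J:\scrH\to\scrH^\dual$, $\xi\mapsto\ip{\xi,\slot}$, on an arbitrary unitary representation (applying Lemma~\ref{lem:F_in_A_estimate} to $U^\dual$ on $\scrH^\dual$), whereas you exploit the self-duality of each $\pi_{(m,0,-m)}$ to realize $J$ as an endomorphism and then invoke the uniformity in $m$ already established; these are equivalent packagings of the same idea.
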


\begin{proof}
Let $U$ be a unitary representation of $\Lie{K}$ on $\scrH$.  The antilinear map $J: \scrH \to \scrH^\dual;\; \xi \mapsto \ip{\xi,\slot}$ intertwines the representations $U$ and $U^\dual$.  One can check that for any $X$ in the complexification $\lie{k}_\CC$,  $J^{-1}U^\dual(X)J = -U(X)^*$.  Since $J$ is anti-unitary, $J^{-1}\Phase{(U^\dual(X))}J = -\Phase{(U(X^*))}$

If $\xi\in\scrH$ has $\Lie{K}_2$-type $\sigma$, then $J\xi$ has $\Lie{K}_2$-type $\sigma^\dual$,  so $J^{-1} p_\sigma^\dual J = p_\sigma$.  Thus, by conjugating by $J$, the estimate \ref{eq:F_in_A_estimate} implies $\| p_{\irrepset^\dual}^\perp (\Phase{X_1}^*) p_\triv \| < \epsilon$,   where $\irrepset^\dual \defeq \{ \sigma^\dual \st \sigma \in \irrepset\}$.

\end{proof}

This completes a base case in the proof of $\Phase{X_1}\in\scrA[\alpha_2]$. 
We now need to replace $\sigma_0$ by an arbitrary $\Lie{K}_2$-type in the preceding two lemmas.  To do so, we use a trick based on the following fact.  

\begin{lemma}
\label{lem:finitely_generated}
For any $\sigma\in\Khat_2$, there exists a finite collection of continuous functions $\psi_1,\ldots,\psi_n \in C(\Lie{K})$ such that $p_\sigma = \sum_{j=1}^n \multop{\psi_j} p_\triv \multop{\overline{\psi_j}}$ as an operator on $L^2(\Lie{K})$.
\end{lemma}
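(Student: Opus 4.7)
The plan is to reduce the operator identity to a pointwise one on $\Lie{K}\times\Lie{K}_2$, and then construct the $\psi_j$'s as matrix coefficients of a single suitable irreducible $\Lie{K}$-representation.

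First I would apply both sides of the asserted identity to an arbitrary $f\in L^2(\Lie{K})$. Using the standard formulas
$$
  (p_\triv f)(k) = \int_{\Lie{K}_2} f(kk_2)\,dk_2,
  \qquad
  (p_\sigma f)(k) = d_\sigma \int_{\Lie{K}_2} \overline{\chi_\sigma(k_2)}\, f(kk_2)\,dk_2,
$$
the operator identity $p_\sigma = \sum_j \multop{\psi_j}\, p_\triv\, \multop{\overline{\psi_j}}$ is equivalent to the pointwise identity
\begin{equation}
\label{eq:finitely_generated_ptwise}
  \sum_j \psi_j(k)\, \overline{\psi_j(kk_2)} \;=\; d_\sigma\, \overline{\chi_\sigma(k_2)},
  \qquad k\in\Lie{K},\ k_2\in\Lie{K}_2.
\end{equation}

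Next I would choose an irreducible $\pi\in\Khat$ such that $\sigma$ occurs in $\pi|_{\Lie{K}_2}$; one exists by Frobenius reciprocity applied to $\Ind_{\Lie{K}_2}^{\Lie{K}}\sigma$, or directly from Peter--Weyl on $L^2(\Lie{K})$. Fix an orthonormal basis $\{\xi_1,\ldots,\xi_d\}$ of a single $\sigma$-subrepresentation of $V^\pi$, normalized so that $\pi(k_2)\xi_j = \sum_l \sigma_{lj}(k_2)\xi_l$, together with an orthonormal basis $\{\eta_1,\ldots,\eta_N\}$ of $V^\pi$. I would then take, as the required collection,
$$
  \psi_{ij}(k) \defeq \sqrt{d_\sigma}\; \ip{\eta_i, \pi(k)\xi_j}, \qquad 1\le i\le N,\ 1\le j\le d,
$$
which are smooth, hence continuous, functions on $\Lie{K}$.

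Verification of \eref{eq:finitely_generated_ptwise} is then a short Schur-orthogonality computation: the $\Lie{K}_2$-covariance of the $\xi_j$'s gives $\psi_{ij}(kk_2) = \sum_l \sigma_{lj}(k_2)\psi_{il}(k)$; Parseval's identity for the orthonormal basis $\{\eta_i\}$ collapses the sum over $i$ to $\ip{\pi(k)\xi_l,\pi(k)\xi_j}$, which equals $\delta_{lj}$ by unitarity of $\pi$ and orthonormality of the $\xi_j$'s; and the remaining sum over $j$ yields $d_\sigma\sum_j \overline{\sigma_{jj}(k_2)} = d_\sigma\overline{\chi_\sigma(k_2)}$, as required.

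I do not foresee any real obstacle here --- the lemma is essentially a repackaging of Peter--Weyl / Frobenius reciprocity. The only point requiring care is the bookkeeping of conventions: whether the isotypical projections of Section \ref{sec:harmonic_decompositions} refer to the right or the left action of $\Lie{K}_2$ on $L^2(\Lie{K})$ (the computation above is written for the right action, consistent with the convention set in Section \ref{sec:harmonic_notation}), and the insertion of the scalar $\sqrt{d_\sigma}$ needed to match the character formula for $p_\sigma$.
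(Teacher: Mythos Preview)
Your proof is correct and takes a genuinely different route from the paper's.  The paper works fibrewise: it first picks an orthonormal basis $b_1,\ldots,b_m$ of the $\sigma$-isotypical subspace of $L^2(\Lie{K}_2)$ (so that $p_\sigma = \sum_j \multop{b_j} p_\triv \multop{\overline{b_j}}$ holds on each fibre $\Lie{K}_2$), then extends these $b_j$ to functions on $\Lie{K}$ by pulling back along local trivializing sections of the principal bundle $\Lie{K}\to\Lie{K}/\Lie{K}_2$, and finally glues with a partition of unity on the base.  Your approach is global and more algebraic: you embed $\sigma$ into a single irreducible $\Lie{K}$-representation $\pi$ via Frobenius reciprocity and take $\psi_{ij}$ to be (rescaled) matrix coefficients of $\pi$; the pointwise identity \eqref{eq:finitely_generated_ptwise} then follows from unitarity of $\pi$ and completeness of the basis $\{\eta_i\}$.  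Your argument is cleaner, yields smooth $\psi_j$ automatically, and avoids the local-section bookkeeping; the paper's argument is perhaps more elementary in that it never leaves $\Lie{K}_2$-representation theory and requires no embedding of $\sigma$ into an ambient $\Lie{K}$-irreducible.
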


%\begin{remark}
%The most elegant way to state and prove this lemma would be that $p_\sigma C(\Lie{K})$ is a finitely generated $C(\Lie{K}/\Lie{K}_2)$-Hilbert module, with inner product given by $L^2$-integration along the $\Lie{K}_2$-cosets.  But it is quicker to prove it directly.
%\end{remark}

\begin{proof}
By the Peter-Weyl Theorem, the right regular representation of $\Lie{K}_2$ has a finite dimensional $\sigma$-isotypical subspace with a basis consisting of continuous functions $b_1,\ldots,b_m\in C(\Lie{K}_2)$.  Thus, for any $f\in L^2(\Lie{K}_2)$, 
\begin{equation}
\label{eq:K2-basis}
  p_\sigma f = \sum_{j=1}^n b_j \ip{b_j,f} = \sum_{j=1}^n \multop{b_j} p_\triv \multop{\overline{b_j}}f.
\end{equation}

Let $\scr{Y}\subset\Lie{K}/\Lie{K}_2$ be open and $\zeta: \scr{Y} \to \Lie{K}$ be a continuous local section of the principal $\Lie{K}_2$-bundle $q:\Lie{K}\to\Lie{K}/\Lie{K}_2$.  We now define functions on $\Lie{K}$ which equal $b_1,\ldots,b_m$ on each fibre over $\scr{Y}$.  To be precise, define $b^\scr{Y}_1,\ldots,b^\scr{Y}_m$ on $\Lie{K}$ by
$$
  b^\scr{Y}_j(k) \defeq \begin{cases}
   b_j(h), \qquad &\text{if $k=\zeta(y)h$ for some $y\in \scr{Y}$, $h\in\Lie{K}_2$},\\
   0 &\text{if $q(k) \notin \scr{Y}$}.
  \end{cases}
$$
By applying \eref{eq:K2-basis} fibrewise we get $p_\sigma f =  \sum_{j=1}^n \multop{b^\scr{Y}_j} p_\triv \multop{\overline{b^\scr{Y}_j}}f$ for any $f \in L^2(K)$ supported on $q^{-1}(\scr{Y})$. 

Now let $\scr{U} = \{(\scr{Y}_l, \zeta_l)\}$ be a finite atlas of such gauges.  Let $a_l \in C(\Lie{K}/\Lie{K}_2)$ be such that $\{a_l^2\}$ is a partition of unity subordinate to this atlas, and pull back to $\tilde{a}_l \defeq a_l\circ q \in C(\Lie{K})$.  Then $b^{\scr{Y}_l}_j\tilde{a}_l$ is continuous on $\Lie{K}$ for each $j,l$, and
for any $f\in L^2(K)$,
$$
  p_\sigma f = \sum_l a_l^2 f = \sum_l \sum_j \multop{(b^{\scr{Y}_l}_j\tilde{a}_l)} p_\triv \multop{\overline{(b^{\scr{Y}_l}_j\tilde{a_l})} }f.
$$
\end{proof}

\begin{lemma}
\label{lem:phase_multops_commute}
Let $\nu$ be a weight of $\Lie{K}$.  For any $f \in C(\Lie{K})$, $[\Phase{X}_1, \multop{f}]p_\nu$ and $ [\Phase{Y}_1, \multop{f}]p_\nu$ are in $\scrK[\alpha_1](L^2(\Lie{K}))$.
\end{lemma}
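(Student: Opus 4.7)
The plan is to reduce to Lemma \ref{lem:F_f_commute} by decomposing $f$ into components of definite right $\Lie{M}$-weight: for such weight-homogeneous pieces the commutator acts between individual line-bundle section spaces, which is exactly the setting of Lemma \ref{lem:F_f_commute}. The passage from this case to an arbitrary continuous $f \in C(\Lie{K})$ will then be handled by a uniform density argument.

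First I will approximate $f \in C(\Lie{K})$ uniformly by trigonometric polynomials, which is legitimate by the Peter--Weyl theorem. Choosing weight bases in each irreducible, any such trigonometric polynomial $g$ admits a finite decomposition $g = \sum_{\mu_0 \in F} g_{\mu_0}$ by right $\Lie{M}$-weight, with $R(m)\, g_{\mu_0} = e^{\mu_0}(m)\, g_{\mu_0}$. In the compact picture, each $g_{\mu_0}$ is a continuous section of the $\Lie{K}$-homogeneous line bundle $E_{-\mu_0}$ (or equivalently of $L_{-\mu_0 \oplus \chiA}$ for any $\chiA$), and multiplication by $g_{\mu_0}$ sends $p_\nu L^2(\Lie{K}) = \LXE[-\nu]$ into $p_{\nu+\mu_0} L^2(\Lie{K}) = \LXE[-\nu-\mu_0]$. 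Lemma \ref{lem:F_f_commute}---whose principal-symbol proof depends only on the $\Lie{K}$-equivariant structure of the bundle, in the spirit of Remark \ref{rmk:mult_ops_in_compact_picture}---then yields
$$
[\Phase{X_1}, \multop{g_{\mu_0}}] \;\in\; \scrK[\alpha_1]\bigl(\LXE[-\nu],\, \LXE[-\nu-\mu_0-\alpha_1]\bigr)
$$
for each $\mu_0 \in F$, and summing the finitely many pieces gives $[\Phase{X_1}, \multop{g}]\, p_\nu \in \scrK[\alpha_1](L^2(\Lie{K}))$.

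To conclude, since $\Phase{X_1}$ is a partial isometry and $\|\multop{f-g}\| \leq \|f - g\|_\infty$, the elementary bound $\|[\Phase{X_1}, \multop{f-g}]\, p_\nu\| \leq 2\, \|f - g\|_\infty$ exhibits $[\Phase{X_1}, \multop{f}]\, p_\nu$ as an operator-norm limit of elements of $\scrK[\alpha_1](L^2(\Lie{K}))$, and norm-closure of the latter finishes the proof. The case of $\Phase{Y_1}$ is entirely analogous (or follows by taking adjoints, as in Remark \ref{rem:commutator_notation}). No step is genuinely hard: the real analytic content---the cancellation of principal symbols on each weight space---is already packaged inside Lemma \ref{lem:F_f_commute}, and the role of $p_\nu$ on the right is simply to collapse an a priori infinite direct sum over all input weights into a single summand, so that the uniform density estimate becomes usable.
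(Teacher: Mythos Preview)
Your argument is correct and is essentially the same as the paper's proof: both reduce to right-$\Lie{M}$-weight vectors (where Lemma~\ref{lem:F_f_commute} applies directly) and then pass to general $f\in C(\Lie{K})$ by uniform density of the span of such vectors. Your write-up is slightly more explicit about the Peter--Weyl approximation and the norm bound justifying the density step, but the structure is identical.
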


\begin{proof}
Suppose first that $f$ is a weight vector for the right regular representation, {\em i.e.}, $f\in\CXE[-\mu]$ for some $\mu$. Then Lemma \ref{lem:F_f_commute} says that
$$
  [\Phase{X}_1, \multop{f}]: p_\nu L^2(\Lie{K}) \to p_{\nu+\mu+\alpha_1} L^2(\Lie{K})
$$
is in $\scrK[\alpha_1]$, which implies the result.  The subspace spanned by these weight vectors contains all matrix units, so is uniformly dense in $C(\Lie{K})$.  A density argument completes the proof.  Similarly, $ [\Phase{Y}_1, \multop{f}]p_\nu \in \scrK[\alpha_1]$.
\end{proof}

\begin{theorem}
\label{thm:phase_in_A}

On any unitary $\Lie{K}$-representation $\scrH$, $\Phase{X_i}$ and $\Phase{Y_i}$ are in $ \scrA(\scrH)$ for $i=1,2$.

\end{theorem}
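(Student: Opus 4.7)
The containment $\Phase{X_i}\in\scrA[\alpha_i](\scrH)$ was already noted from \cite{Yuncken:PsiDOs} at the beginning of this appendix, so the remaining content is the ``cross'' statement $\Phase{X_1}\in\scrA[\alpha_2](\scrH)$; the case $\Phase{Y_1}\in\scrA[\alpha_2]$ follows by taking adjoints, and the statements $\Phase{X_2},\Phase{Y_2}\in\scrA[\alpha_1]$ by the symmetric argument with the roles of $\alpha_1,\alpha_2$ interchanged. By Proposition \ref{prop:AS_equivalent_conditions}(iii), to establish $\Phase{X_1}\in\scrA[\alpha_2](\scrH)$ it suffices to show $\Phase{X_1}\,p_\sigma,\,p_\sigma\,\Phase{X_1}\in\scrK[\alpha_2](\scrH)$ for every $\sigma\in\Khat_2$. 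The base case $\sigma=\triv$ is Lemmas \ref{lem:F_in_A_estimate} and \ref{lem:F_in_A_estimate2}; the plan is to bootstrap from this base case to arbitrary $\sigma$ using Lemma \ref{lem:finitely_generated}.

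I would first reduce to $\scrH=L^2(\Lie{K})$ by Peter--Weyl, then use Lemma \ref{lem:finitely_generated} to write $p_\sigma=\sum_j\multop{\psi_j}\,p_\triv\,\multop{\overline{\psi_j}}$. Applying the Leibniz rule,
\[
\Phase{X_1}\,p_\sigma\;=\;\sum_j\multop{\psi_j}\,\Phase{X_1}\,p_\triv\,\multop{\overline{\psi_j}}\;+\;\sum_j[\Phase{X_1},\multop{\psi_j}]\,p_\triv\,\multop{\overline{\psi_j}}.
\]
The first sum lies in $\scrK[\alpha_2]$ by the base case, the fact that each $\multop{\psi_j}$ lies in $\scrA$ (decompose $\psi_j$ into its finitely many $\Lie{M}$-weight components, noting $\psi_j$ has $\Lie{K}_2$-type $\sigma$, and apply Lemma \ref{lem:mult_ops_in_A}), and the ideal property of $\scrK[\alpha_2]\subseteq\scrA$ from Theorem \ref{thm:lattice_of_ideals}(i).

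The main obstacle is the commutator sum. The crucial observation is that $p_\triv\leq p_0$ (since $\Lie{M}\subset\Lie{K}_2$, the trivial $\Lie{K}_2$-type restricts to the trivial $\Lie{M}$-type), which lets us factor each summand as
\[
[\Phase{X_1},\multop{\psi_j}]\,p_\triv\,\multop{\overline{\psi_j}}\;=\;\bigl([\Phase{X_1},\multop{\psi_j}]\,p_0\bigr)\cdot\bigl(p_\triv\,\multop{\overline{\psi_j}}\bigr).
\]
By Lemma \ref{lem:phase_multops_commute}, the left factor lies in $\scrK[\alpha_1]$. The right factor $p_\triv\,\multop{\overline{\psi_j}}$ is $\Lie{K}_2$-harmonically finite (image in the trivial $\Lie{K}_2$-type, domain forced to the $\sigma$-type by Clebsch--Gordan), hence lies in $\scrK[\alpha_2]$. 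The product therefore sits in $\scrK[\alpha_1]\cdot\scrK[\alpha_2]$; since both $\scrK[\alpha_1]$ and $\scrK[\alpha_2]$ are ideals in $\scrA$, and $\scrK[\alpha_1]\cap\scrK[\alpha_2]=\scrK$ by Theorem \ref{thm:lattice_of_ideals}(ii), this product belongs to $\scrK\subseteq\scrK[\alpha_2]$.

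The companion statement $p_\sigma\,\Phase{X_1}\in\scrK[\alpha_2]$ is obtained by the symmetric decomposition $p_\sigma\,\Phase{X_1}=\sum_j\multop{\psi_j}\,p_\triv\,\Phase{X_1}\,\multop{\overline{\psi_j}}+\sum_j\multop{\psi_j}\,p_\triv\,[\multop{\overline{\psi_j}},\Phase{X_1}]$, where the first sum is handled by Lemma \ref{lem:F_in_A_estimate2} and the second by inserting $p_0$ between $p_\triv$ and the commutator. The conceptual difficulty throughout is to ``upgrade'' the commutator terms from the readily-available $\scrK[\alpha_1]$ into $\scrK[\alpha_2]$; this is accomplished by exploiting the weight-zero support of $p_\triv$ on one side and the $\Lie{K}_2$-harmonic finiteness of $p_\triv\multop{\overline{\psi_j}}$ on the other, so that the ``crossover'' identity $\scrK[\alpha_1]\cap\scrK[\alpha_2]=\scrK$ delivers compactness and hence membership in $\scrK[\alpha_2]$.
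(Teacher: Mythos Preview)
Your proof is correct and follows essentially the same strategy as the paper: reduce to $L^2(\Lie{K})$, expand $p_\sigma$ via Lemma \ref{lem:finitely_generated}, and split by the Leibniz rule into a main term (handled by the base case Lemmas \ref{lem:F_in_A_estimate}--\ref{lem:F_in_A_estimate2}) and a commutator term. Your handling of the commutator---inserting $p_0$ to invoke Lemma \ref{lem:phase_multops_commute}, then using $\scrK[\alpha_1]\cdot\scrK[\alpha_2]\subseteq\scrK$ via Theorem \ref{thm:lattice_of_ideals}---spells out what the paper leaves implicit when it simply cites the three lemmas together; the paper also obtains the $i=2$ case by conjugating with the longest Weyl element rather than by a symmetric argument, but this is cosmetic.
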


\begin{proof}
We begin with $\scrH=L^2(\Lie{K})$ with the right regular representation, and consider $\Phase{X_1}$.  As in Lemma \ref{lem:degeneracy}, the  finite multiplicity of $\Lie{K}$-types in $L^2(\Lie{K})$ implies that $\scrA[\Sigma](L^2(\Lie{K})) = \scrL(L^2(\Lie{K}))$, so $\Phase{X_1} \in \scrA[\Sigma]$ trivially.  Since $\Phase{X_1}$ maps the $\mu$-weight space into the $(\mu+\alpha_1)$-weight space for each weight $\mu$, it is $\Lie{M}$-harmonically proper, so in $\scrA[\emptyset]$.  Since $X_1\in(\lie{k_1})_\CC$, $\Phase{X_1}$ preserves $\Lie{K}_1$-types, so is in $\scrA[\alpha_1]$.  It remains to show $\Phase{X_1}\in\scrA[\alpha_2]$.

Let $\sigma\in\Khat_2$ and let $\psi_1,\ldots, \psi_n\in C(\Lie{K})$ be as in Lemma \ref{lem:finitely_generated}.  Then
\begin{eqnarray*}
  (\Phase{X_1}) p_\sigma
    &=& \sum_{j=1}^n  (\Phase{X_1}) \multop{\psi_j} p_\triv \multop{\overline{\psi_j}}  \\
    &=& \sum_{j=1}^n  \multop{\psi_j} (\Phase{X_1}) p_\triv \multop{\overline{\psi_j}} 
      + \sum_{j=1}^n   [(\Phase{X_1}), \multop{\psi_j}] p_\triv \multop{\overline{\psi_j}}. 
\end{eqnarray*}
Since $p_\triv$ projects into the $0$-weight space, Lemmas \ref{lem:F_in_A_estimate}, \ref{lem:phase_multops_commute} and \ref{lem:mult_ops_in_A}, imply  $(\Phase{X_1})p_\sigma \in \scrK[\alpha_2]$. 
%By Lemma \ref{lem:F_in_A_estimate}, $(\Phase{X_1}) p_\triv \in \scrK[\alpha_2]$.  Since $p_\triv$ projects into the $0$-weight space, Lemma \ref{lem:phase_multops_commute}, implies $[(\Phase{X_1}), \multop{\psi_j}] p_\triv \in \scrK[\alpha_2]$.   Multiplication operators are in $\scrA$ (Lemma \ref{lem:mult_ops_in_A}), so we have $(\Phase{X_1}) p_\sigma \in \scrK[\alpha_2]$.  
A similar computation using Lemma \ref{lem:F_in_A_estimate2} shows that $(\Phase{Y_1})p_\sigma = (\Phase{X_1}^*)p_\sigma \in \scrK[\alpha_2]$, so  $p_\sigma(\Phase{X_1}) \in \scrK[\alpha_2]$. By Proposition \ref{prop:AS_equivalent_conditions}, $\Phase{X_1} \in \scrA[\alpha_2]$.  

Therefore $\Phase{X_1}\in\scrA$.  By taking adjoints, $\Phase{Y_1}\in \scrA$.  

Conjugation by the longest Weyl group element interchanges $\scrA[\alpha_1]$ and $\scrA[\alpha_2]$ and fixes $\scrA[\emptyset]$ and $\scrA[\Sigma]$, so fixes $\scrA$.  It also sends $X_1$ and $Y_1$ to $Y_2$ and $X_2$, respectively.  We obtain $\Phase{Y_2}$, $\Phase{X_2} \in \scrA$.

The theorem remains true if $\scrH$ is a direct sum of arbitrarily many copies of the regular representation.  Since every unitary $\Lie{K}$-representation can be equivariantly embedded into such a direct sum, we are done.

\end{proof}

\begin{corollary}
\label{cor:F_in_A}

Let $\scrH$ be any unitary $\Lie{K}$-representation.
For $i=1,2$ and any weight $\mu$, 
$\Phase{X_i}: p_\mu\scrH \to p_{\mu+\alpha_i}\scrH$ is in $\scrA$.  

In particular, $\Phase{X_i}: \LXE[-\mu] \to \LXE[-\mu-\alpha_i] \in \scrA$.

\end{corollary}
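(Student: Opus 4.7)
The plan is to derive this corollary directly from Theorem \ref{thm:phase_in_A} by restricting the operator $\Phase{X_i}:\scrH\to\scrH$ to weight subspaces. First I would observe that $p_\mu\scrH$ and $p_{\mu+\alpha_i}\scrH$ are harmonic $\Lie{K}$-spaces in the sense of Section \ref{sec:harmonic_decompositions}, being single weight spaces of the unitary $\Lie{K}$-representation $\scrH$. Because $X_i$ is a root vector of weight $\alpha_i$, and hence $\Phase{X_i}$ shifts $\Lie{M}$-weights by $+\alpha_i$, the operator $\Phase{X_i}$ takes $p_\mu\scrH$ into $p_{\mu+\alpha_i}\scrH$. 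Consequently the operator of interest coincides with the compression $p_{\mu+\alpha_i}\Phase{X_i}p_\mu$.

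Next I would invoke Theorem \ref{thm:phase_in_A} to assert $\Phase{X_i}\in\scrA(\scrH)$. It then suffices to show that compression to these harmonic subspaces preserves $\scrA[S]$-membership for every $S\subseteq\Sigma$. The key structural fact is that $\Lie{M}\subseteq\Lie{K}_S$ for each such $S$, so the weight-space projections $p_\mu$ and $p_{\mu+\alpha_i}$ commute with every $\Lie{K}_S$-isotypical projection $p_\sigma$. In particular, the $\Lie{K}_S$-type matrix of the compression $p_{\mu+\alpha_i}\Phase{X_i}p_\mu:p_\mu\scrH\to p_{\mu+\alpha_i}\scrH$ is simply a submatrix of the $\Lie{K}_S$-type matrix of $\Phase{X_i}$ on $\scrH$. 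Row- and column-finiteness pass to submatrices and operator-norm closure is preserved, so the compression lies in $\scrA[S](p_\mu\scrH,p_{\mu+\alpha_i}\scrH)$ for every $S\subseteq\Sigma$. Intersecting over $S$ yields membership in $\scrA$.

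Finally, the ``in particular'' statement is obtained by specializing to $\scrH=L^2(\Lie{K})$ equipped with the right regular representation, and invoking the identification $\LXE[-\mu]\cong p_\mu L^2(\Lie{K})$ from Equation \eref{eq:m-equivariance}.

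There is no genuine obstacle here: the corollary is essentially a bookkeeping consequence of Theorem \ref{thm:phase_in_A}, in which the substantial harmonic-analytic work of this appendix has already been done. The only point deserving explicit mention is the compatibility of $\Lie{M}$-weight-space projections with $\Lie{K}_S$-isotypical projections, which is automatic from the containment $\Lie{M}\subseteq\Lie{K}_S$ for every $S\subseteq\Sigma$.
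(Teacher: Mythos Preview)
Your proposal is correct and is exactly the unpacking the paper intends: the paper states this corollary without proof, treating it as an immediate consequence of Theorem \ref{thm:phase_in_A}. Your compression-to-weight-spaces argument, using that $\Lie{M}\subseteq\Lie{K}_S$ so weight projections commute with $\Lie{K}_S$-isotypical projections, is precisely the implicit reasoning.
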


The above result is sufficient for the applications of this paper.  But the generalization to arbitrary order zero longitudinal pseudodifferential operators (Theorem \ref{thm:PsiDOs_in_A}) is easily deduced from it, and perhaps useful for future applications.

\begin{proof}[Proof of Theorem \ref{thm:PsiDOs_in_A}]

Start with the case $E=E'= E_0$, the trivial line bundle over $\scrX$.  Recall Connes' short exact sequence %(see \cite{MS-GAFS})
$$
  \xymatrix@!R{
  0 \ar[r] &
  \overline{\PsiDO[i]^{-1}}(E_0) \ar[r] &
  \overline{\PsiDO[i]^{0}}(E_0) \ar[r]^{\Symbol[i]} &
  C(\cosphere\foliation[i]) \ar[r] &
  0.
  }
$$
We know from \cite{Yuncken:PsiDOs} that $\overline{\PsiDO[i]^{-1}}(E_0) \subseteq \scrA$.  Let $\scr{C} \subseteq C(\cosphere\foliation[i])$ be the image of  $\overline{\PsiDO[i]^{0}}(E_0) \cap \scrA$ under the longitudinal principal symbol map.  We prove that $\scr{C} = C(\cosphere\foliation[i])$ by showing that it separates the points of $\cosphere\foliation[i]$, in the sense of the Stone-Weierstrass Theorem.

For any $f\in C(\scrX)$, the multiplication operator $M_f$ is in $\PsiDO[i]^{0}(E_0) \cap \scrA$, so the function algebra $\scr{C}$ separates points in different fibres of $\cosphere\foliation[i]$.  The longitudinal principal symbol of $\Phase{X_i}$ separates points in the fibre at the identity coset (see Lemma \ref {lem:F_in_PsiDO}).  Let $\varphi \in\CXE[\alpha]$ be any smooth section of $E_\alpha$ which is nonzero at the identity coset. Then $M_\varphi \Phase{X_i} \in \PsiDO[i]^0(E_0) \cap \scrA$ and its principal symbol separates points of the fibre at the identity coset.  Conjugating by translations by $k\in\Lie{K}$, $\scr{C}$ separates points in any fibre.

Now suppose $E = E_\mu$, $E'=E_\nu$ are general $\Lie{K}$-homogeneous line bundles.  Find partitions of unity $\sectionpoi_1,\ldots,\sectionpoi_n\in\CXE[\mu]$, $\sectionpoi'_1,\ldots, \sectionpoi'_m\in \CXE[\nu]$ in the sense of Lemma \ref{lem:partition_of_unity}.  If $A\in\overline{\PsiDO[i]^0}(E,E')$, then $ M_{\overline{\sectionpoi'_j}} A M_{\sectionpoi_k} \in \overline{\PsiDO[i]^0}(E_0) \subseteq \scrA$ for each $j,k$.  Hence $A = \sum_{j,k} M_{\sectionpoi'_j} M_{\overline{\sectionpoi'_j}} A M_{\sectionpoi_k} M_{\overline{\sectionpoi_k}} \in \scrA$.

The case of higher dimensional bundles reduces to the above by decomposing equivariantly into line bundles.

\end{proof}

\bibliographystyle{alpha}

\bibliography{sl3-gamma}

\end{document}